\newcommand{\tsk}[1]{\textcolor{YellowOrange}}
\def\@endtheorem{\endtrivlist}
\newtheorem{teo}{Theorem}[section]
\newtheorem{defin}[teo]{Definition}
\newtheorem{prop}[teo]{Proposition}
\newtheorem{cor}[teo]{Corollary}
\newtheorem{lemma}[teo]{Lemma}
\theoremstyle{definition}
\newtheorem{remark}[teo]{Remark}
\newtheorem{claim}{Claim}[section]
\newtheoremstyle{dico}
 {\baselineskip}   
  {\topsep}   
  {}  
  {0pt}       
  {} 
  {.}         
  {5pt plus 1pt minus 1pt} 
  {}          
\theoremstyle{dico}
\numberwithin{equation}{section}
\newcommand{\ra}{\rightarrow}
\newcommand{\meno}{^{-1}}
\newcommand{\comp}{\circ}
\renewcommand{\setminus}{-}
\newcommand{\om}{\omega}
\renewcommand{\phi}{\varphi}
\newcommand{\lra}{\longrightarrow}
\newcommand{\PP}{\mathbb{P}^1}
\newcommand{\OO}{\mathcal{O}}
\newcommand{\mihi}[1]{}
\newcommand{\Nm}{\operatorname{Nm}}
\newcommand{\cov}{\pi: D\rightarrow C}
\begin{document}
 
\pagestyle{myheadings}

\title{The fibres of the ramified Prym map}

\author{P. Frediani, J.C. Naranjo and I. Spelta}

\address{Paola Frediani  \\ Universit\`a degli Studi di Pavia  \\ Dipartimento di Matematica \\ Via Ferrata 5  \\ 27100 Pavia, Italy  }
\email{paola.frediani@unipv.it}

\address{Juan Carlos Naranjo \\ Departament de Matem\`atiques i Inform\`atica \\ Universitat de Barcelona \\Spain }
\email{jcnaranjo@ub.edu}

\address{Irene Spelta  \\ Universit\`a degli Studi di Pavia  \\ Dipartimento di Matematica \\ Via Ferrata 5  \\ 27100 Pavia, Italy  }
\email{irene.spelta01@universitadipavia.it}

\thanks{The first and third authors were partially supported by MIUR PRIN 2017
	``Moduli spaces and Lie Theory'' ,  by MIUR, Programma Dipartimenti di Eccellenza
	(2018-2022) - Dipartimento di Matematica ``F. Casorati'',
	Universit\`a degli Studi di Pavia and by INdAM (GNSAGA).  
	The second author was partially supported by the Proyecto de Investigaci\'on MTM2015-65361-P}
\date{}

	\begin{abstract}
		We study the ramified Prym map $\mathcal P_{g,r} \longrightarrow \mathcal A_{g-1+\frac r2}^{\delta}$ which assigns to a ramified double cover of a smooth irreducible curve of genus $g$ ramified in $r$ points the Prym variety of the covering. We focus on the six cases where the dimension of the source is strictly greater than the dimension of the target
		giving a geometric description of the generic fibre. We also give an explicit example of a totally geodesic curve which is an irreducible component of a fibre of the Prym map ${\mathcal P}_{1,2}$.  
	\end{abstract}
	\maketitle
	\setcounter{section}{-1}
	\section{Introduction}
	The Prym map $\mathcal P_{g,r}$ assigns to a degree $2$ morphism  $\pi: D \longrightarrow C$ of smooth complex irreducible curves ramified in an even number of points $r\ge 0$, a polarized abelian variety $P(\pi)=P(D,C)$ of dimension $g - 1 + \frac r2 $, where $g$ is the genus of $C$. We assume $g>0$ throughout  the paper.  
	The variety $P(\pi)$ is called the Prym variety of $\pi$ and is defined as the connected component of the origin of the kernel of the norm map $\Nm_{\pi }:JD \lra JC.$ 
	Hence, denoting by $\mathcal R_{g,r} $ the moduli space of isomorphism classes of the morphisms $\pi$, we have maps:
	\begin{equation*}
		\mathcal P_{g,r} : \mathcal R_{g,r} \longrightarrow \mathcal A^\delta_{g-1+\frac{r}{2}},
	\end{equation*}
	to the moduli space of abelian varieties of dimension  $g-1+\frac{r}{2}$ with polarization type
	$\delta:=(1,\ldots, 1,2, \ldots ,2)$, with $2$  repeated $g$ times if $r>0$ and $g-1$ times if $r=0$.
	
	The case $r=0$ is very classical. Indeed, Prym varieties of unramified coverings are principally polarized abelian varieties and they have been studied for over one hundred years, initially by Wirtinger, Schottky and Jung (among others) in the second half of the $19$th century from the analytic point of view.  They were studied later from an algebraic point of view in the seminal work of Mumford \cite{mumford} in 1974. We refer to \cite[section 1]{Farkas} for a historical account. Since Mumford's work, a lot of information has been obtained about the unramified (or ``classical'') Prym map $\mathcal P_{g,0}$. This theory is strongly related with the study of the Jacobian locus, Schottky equations and rationality problems among other topics.  It is known that $\mathcal P_{g,0}$ is generically injective for $g\ge 7$ but never injective (see  \cite{donagi} and the references therein). Moreover, in low genus, a detailed study of the structure of the fibre was provided by the works of Verra (\cite{verra}, for $g=3$),  Recillas  (\cite{ReciTrig} for $g=4$), Donagi (\cite{donagi} for $g=5$) and Donagi and Smith (\cite{ds} for $g=6$). All these results have been summarized under a uniform presentation in the fundamental work of Donagi \cite{donagi}. As we explain below, the aim  of this paper is to do an analogous  work for the fibres of the ramified Prym map in low genus.

	Although some specific cases were considered previously in \cite{nr} and \cite{bcv}, a systematic study of the properties of the ramified Prym map in full generality starts with the work of Marcucci and Pirola \cite{mp}. Combining their results with the main theorems in \cite{mn} and \cite{naranjo-ortega}, the generic Torelli theorem is proved for all the cases where the dimension of the source $\mathcal R_{g, r}$ is smaller than the dimension of the target $\mathcal A_{g-1+\frac r2}^{\delta}$. In fact, recently, a global Torelli theorem has been announced for all $g$ and $r\ge 6$ (\cite{ikeda} for $g=1$ and \cite{naranjo-ortega2} for all $g$).
	
	In this paper we address to the opposite side of the study of the ramified Prym map: the structure of the generic fibre when
	\begin{equation}\label{disuguaglianza}
		\dim \mathcal R_{g, r}=3g-3+r > \dim \mathcal A_{g-1+\frac r2}^{\delta}=\frac{1}{2}(g-1+\frac{r}{2})(g+\frac{r}{2}).
	\end{equation}
	Notice that this inequality still holds in case $g=1$: using translations we can always assume that one of the branch points is in the origin, hence $\dim \mathcal R_{1, r}=1+(r-1)=r$.
	Condition \eqref{disuguaglianza} is only possible in six cases that will be considered along the paper: $r=2$ and $1\le g\le 4$ and $r=4$ and $1\le g \le 2$. The case $g=1, r=4$ was considered by Barth in his study of abelian surfaces with polarization of type $(1,2)$ (see \cite{Barth}).
	
	 In \cite{fgs} and \cite{gm} infinitely many examples of totally geodesic and of Shimura subvarieties of ${\mathcal A}_g$  generically contained in the Torelli locus have been constructed as fibres of ramified Prym maps. 
	In  \cite{moonen-special}, \cite{moonen-oort}, \cite{fgp}, \cite{fpp} examples of Shimura subvarieties of ${\mathcal A}_g$ generically contained in the Torelli locus have been constructed  as families of Jacobians of  Galois covers of ${\mathbb P}^1$ or of elliptic curves. Some of them are contained in fibres of ramified Prym maps.
	
	In particular, the images in ${\mathcal M}_2$ and in ${\mathcal M}_3$ of ${\mathcal R}_{1,2}$, respectively ${\mathcal R}_{1,4}$, are  the bielliptic loci and, in \cite{fgs}, it is proven  that the irreducible components of the fibres of the Prym maps ${\mathcal P}_{1,2}$, ${\mathcal P}_{1,4}$ yield totally geodesic curves in ${\mathcal A}_2$ and  ${\mathcal A}_3$ and countably many of them are Shimura curves. 
	Moreover in \cite{fgs} it is shown that family $(7) = (23) = (34)$ of \cite{fgp} is a fibre of the Prym map ${\mathcal P}_{1,4}$, which is a Shimura curve.

	In this paper (section 8) we give an explicit example of a totally geodesic curve which is an irreducible component of a fibre of the Prym map ${\mathcal P}_{1,2}$.

	It is worthy to mention that degree $2$ coverings ramified in $2$ points can be seen as the normalization of coverings of nodal curves. Beauville extended in \cite{beau} the classical Prym map to some coverings of stable curves (called ``admissible'' coverings) in such a way that the extended Prym map
	\[
	\overline {\mathcal P}_{g}:\overline {\mathcal R}_{g} \longrightarrow \mathcal A_{g-1}                                                                                                                                                                                                                                                                                                                                                            \]
	becomes proper (to simplify the notation, $\mathcal P_{g,0}$ and $\mathcal R_{g,0}$ are denoted by $ \mathcal P_{g} $ and $ \mathcal R_{g} $). Then  the moduli space $\mathcal{R}_{g-1,2}$ can be identified with an open set of a boundary divisor of $\overline{\mathcal{R}}_{g}$. With this strategy the mentioned works of Verra, Recillas and Donagi could help to understand the cases $r=2$ and $2\le g \le 4$. Unfortunately this way becomes cumbersome since the  intersection of the generic fibre with the boundary is usually difficult to be described. We have found (except for the case $g=4$) direct procedures to study the fibre mainly based on the bigonal construction (see \cite{donagi}) and the extended trigonal construction (see \cite{lange-ortega}). We overview both in the next section.  
	
	The results obtained in this paper can be summarized in the Theorem below. To state our theorem in the case $r=2$, $g=4$ we need to recall that Donagi found a birational map
	\[
	\kappa: \mathcal A_{4} \longrightarrow \mathcal {RC}^+,
	\]
	where $\mathcal {RC}^+$ is the moduli space of pairs $(V, \delta )$, $V$ being a smooth cubic threefold and $\delta$ an ``even'' $2$-torsion point in the intermediate Jacobian $JV$ (see \cite[section 5]{donagi}).
	
	\begin{teo}
		We have the following description of the fibres of the ramified Prym map in the cases $(g,r) \in \{(1,2),(1,4),(2,2),(2,4),(3,2),(4,2)\}$:
		\begin{enumerate}  
			\item [a)] For a generic elliptic curve $E$ the fibre $\mathcal P_{1,2}^{-1}(E)$ is  isomorphic to $L_1 \sqcup \ldots \sqcup L_4$, where each $L_i$ is the complement of three points in a projective line.
			\item [b)] (Barth) Let $(A,L)$ be a generic abelian surface with a polarization of type $(1,2)$. Then there is a natural polarization $L ^* $ of type $(1,2)$ in the dual abelian variety $A^*$ and the fibre $\mathcal P_{1,4}^{-1}(A)$ is canonically isomorphic to the linear system $\vert L^*\vert$.
			\item [c)] The generic fibre of $\mathcal P_{2,2}$ is isomorphic to the complement of $15$ lines in a projective plane.
			\item [d)] The generic fibre of $\mathcal P_{2,4}$ is isomorphic to the complement of $15$ points in an elliptic curve.
			\item [e)] Let $X$ be a generic quartic plane curve, consider the variety $\mathsf G^1_4(X)$ of the $g^1_4$ linear series on $X$, and denote by $i$ the involution $L\mapsto \omega_X^{\otimes2}\otimes L\meno$. Then  $\mathcal P_{3,2}^{-1}(JX)$ is isomorphic to the quotient by $i$ of an explicit $i$-invariant open subset of $\mathsf G^1_4(X)$.
			\item [f)] Let $(V, \delta)$ be a generic element in $\mathcal {RC}^+$ and let $\Gamma \subset JV$ be the curve of lines  $l$ in $V$ such that there is a $2$-plane $\Pi$ containing $l$ with $\Pi \cdot V=l+2r$. Then $\mathcal P_{4,2}^{-1}(V,\delta )$ is isomorphic to  a precise open set $\Gamma_0$ of  $\Gamma $ (see \ref{open_set}).
		\end{enumerate}
	\end{teo}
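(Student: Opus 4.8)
The six cases are genuinely different and I would treat them one at a time, with a common two-step strategy. The first step, uniform across cases, is Riemann--Hurwitz: a degree-$2$ cover $\pi\colon D\to C$ ramified at $r$ points over a curve of genus $g$ has $g(D)=2g-1+\tfrac r2$; together with the recollection that the polarization type on $P(D/C)$ is $\delta=(2,\dots,2)$ when $r=2$, so that the target is $\mathcal A_{g}$ after rescaling, and $\delta=(1,2,\dots,2)$ when $r=4$. The second step is to translate the Prym datum $\bigl(\pi\colon D\to C,\ P(D/C)\cong A\bigr)$ into a classical object by means of a construction adapted to a distinguished low-degree pencil on $C$, and then to read off the generic fibre together with the locus that must be removed. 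For $g\le 3$ such a pencil is available for free and the relevant machinery is the bigonal construction of \cite{donagi} and the extended trigonal construction of \cite{lange-ortega}; for $g=4$ neither applies directly and one goes instead through Donagi's birational model $\kappa\colon\mathcal A_4\longrightarrow\mathcal{RC}^+$ of \cite[section~5]{donagi}.

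\emph{Cases (a), (c), (d) and (b).} When $g=2$ the curve $C$ carries its hyperelliptic pencil; when $g=1$ a generic $D$ in the fibre is bielliptic, with hyperelliptic involution $\iota$ and bielliptic involution $\sigma$, and $\iota$ descends to an involution of $C=D/\sigma$ that realises a canonical degree-$2$ map $C\to\mathbb P^1$. In either case we obtain a tower $D\to C\to\mathbb P^1$ of two double covers, the first ramified at the $r$ points and the second at a fixed even number of branch points ($4$ if $g=1$, $6$ if $g=2$). I would feed this tower into the ramified bigonal construction, producing a dual tower $\widehat D\to\widehat C\to\mathbb P^1$ whose Prym $P(\widehat D/\widehat C)$ is canonically isomorphic to $P(D/C)$, or dual to it and hence still canonically determined. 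Tracking the branch configuration on $\mathbb P^1$ through the construction turns $\mathcal P_{g,r}^{-1}(A)$ into the moduli of configurations of points on $\mathbb P^1$ compatible with the fixed $A$: for $g=1$, after quotienting by the residual $\mathrm{PGL}_2$, this is a one-parameter family which one identifies with a projective line, minus three boundary points where two branch points collide or meet a fixed point of the involution, the four disjoint components of (a) reflecting the $JC[2]\cong(\mathbb Z/2\mathbb Z)^2$-torsor of square roots of the branch divisor that survive the reconstruction; for $g=2$ one lands on a projective plane (case (c)) or on an elliptic curve (case (d)), from which one deletes the $15=2^4-1$ loci indexed by the nontrivial $2$-torsion points of a genus-$2$ Jacobian $JC_0$ naturally attached to $A$, those being exactly where the dual cover splits or acquires a node. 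Case (b) is Barth's theorem \cite{Barth}: his dictionary presents a ramified double cover of an elliptic curve branched at four points as a $(1,2)$-polarized abelian surface $(A,L)$ together with a point of the linear system $|L^*|$ of the dual polarization on $A^*$, so the fibre is canonically $\mathbb P(H^0(A^*,L^*))=|L^*|\cong\mathbb P^1$; here it remains only to match Barth's conventions with the normalisation of $\mathcal P_{1,4}$ and to note that the generic $(1,2)$-polarized surface is attained, which is forced by $\dim\mathcal R_{1,4}=4>3=\dim\mathcal A_2^{(1,2)}$ and genericity.

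\emph{Case (e).} A generic $C$ in the fibre of $\mathcal P_{3,2}$ is trigonal, since every genus-$3$ curve is, so I would apply the extended trigonal construction of \cite{lange-ortega} to $\pi\colon D\to C$ relative to a $g^1_3$ on $C$: it produces a tetragonal curve $X'$ with a chosen $g^1_4$ such that $P(D/C)\cong JX'$, and conversely. Setting $X'=X$, the plane quartic with $JX$ equal to the target point, the relevant pencils are precisely the $g^1_4$'s exhibiting $X$ as a tetragonal curve, i.e.\ an open subset of $\mathsf G^1_4(X)$, and the fact that the two pencils $L$ and $\omega_X^{\otimes2}\otimes L^{-1}$ give rise to the same cover forces the quotient by the involution $i$. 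The work is to determine the precise open subset --- the $g^1_4$'s whose reconstructed cover is smooth, connected, and ramified at exactly two points --- and to verify that nothing outside the trigonal stratum contributes to the \emph{generic} fibre.

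\emph{Case (f).} For $g=4$ I would pass through Donagi's map $\kappa$, so that a generic $JX\in\mathcal A_4$ becomes a pair $(V,\delta)$ with $V$ a smooth cubic threefold and $\delta$ an even $2$-torsion point of $JV$. The plan is to match a ramified double cover $D\to C$, with $C$ of genus $4$ and two branch points, to a line $l\subset V$ of the second type, using that the Fano surface of lines on $V$ carries the incidence structure encoding the genus-$4$ curve together with its theta-characteristic data; concretely such a line $l$ lies in a plane $\Pi$ with $\Pi\cdot V=l+2r$, and as $l$ varies these lines sweep out the curve $\Gamma\subset JV$ of the statement. The generic fibre is then the open subset $\Gamma_0\subset\Gamma$ of those $l$ for which the reconstruction is an honest smooth connected double cover ramified at exactly two points (see \ref{open_set}), and one must check that $\Gamma_0\to\mathcal P_{4,2}^{-1}(V,\delta)$ is a bijection. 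I expect this last case to be the principal obstacle, as it demands a careful comparison of the Prym construction with Donagi's intermediate-Jacobian dictionary --- and indeed it is the one the authors flag as lying beyond the reach of the bigonal and trigonal methods. Finally, in every case one should record that $\mathcal P_{g,r}$ is dominant onto the target, which is immediate from \eqref{disuguaglianza} together with algebraicity and non-constancy of the map, and upgrade the bijections above to isomorphisms of varieties; the latter is routine once each construction is phrased functorially on the moduli level.
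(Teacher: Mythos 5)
Your blueprint coincides with the paper's in every case: a direct/bigonal analysis plus Pantazis duality for $(1,2)$, $(2,2)$, $(2,4)$; Barth's duality theorem for $(1,4)$; the Lange--Ortega extended trigonal construction for $(3,2)$ with the quotient by $i$ accounting for the two trigonal series through the branch divisor; and Donagi's boundary description of $\overline{\mathcal P}_5$ for $(4,2)$. However, three steps that you dismiss as routine are genuine gaps. First, dominance is \emph{not} ``immediate from \eqref{disuguaglianza} together with algebraicity and non-constancy'': a non-constant map from a higher-dimensional source may well have image of positive codimension. The paper proves dominance (Proposition \ref{dominant}) by checking that the codifferential $Sym^2H^0(C,\omega_C\otimes\eta)\to H^0(C,\omega_C^2\otimes\mathcal O(B))$ is injective (no quadrics through the Prym-canonical curve), and for $(4,2)$ by a separate argument on the boundary divisor $\Delta^n$ using Izadi's results. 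Second, in cases (c) and (d) the reduction of $\mathcal P_{g,r}^{-1}(A)$ to a single fibre of the forgetful map requires knowing that the Prym/Torelli map on the bigonally dual side is injective, so that the dual tower lying over a fixed $A$ is unique: for (c) this is genus-$2$ Torelli, while for (d) it is Ikeda's injectivity of $\mathcal P_{1,6}$, extended to the mildly degenerate covers the bigonal construction can produce. Without these inputs the fibre could a priori be a union over several dual towers and the deletion of $15$ loci would not describe it completely.

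The most serious omission is in (f), where your plan stops exactly where the work begins. The paper's mechanism is: an element of $\mathcal R_{4,2}$ is an admissible cover of a one-nodal curve $C^*$ of arithmetic genus $5$; by Izadi's theorem the discriminant quintic $Q_l$ of the line $l=\tau(\pi^*)$ parametrizes the singular quadrics through the canonical model of $C^*$; and an analysis of the net of quadrics through a nodal canonical curve --- via Wall's results on nets of quadrics, including the verification that the node of $C^*$ is a \emph{tame} singular point of the net --- shows that $Q_l$ acquires exactly one node when $C^*$ has exactly one node, and at least two nodes when $C^*$ does. This is precisely what identifies the image as $\Gamma_0$ and rules out contributions from more degenerate admissible covers; none of it is present, even in outline, in the proposal. (A smaller inaccuracy: in (a) the four components are indexed in the paper by which branch point of $E\to\mathbb P^1$ is the image of the branch divisor $b_1+b_2$, rather than directly by the four square roots of $\mathcal O_C(B)$ on a fixed $C$; the count of four is the same, but the parametrization you would need to make the components explicit is the former.)
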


	For more details  see Theorems \eqref{teo1}, \eqref{teo2}, \eqref{teo3}, \eqref{teo4}, \eqref{teo5}, \eqref{teo6}. 
	
	The paper is organized as follows: in the next section we give some preliminaries about the bigonal and trigonal construction and also on the differential of the ramified Prym map. In particular, we prove that the maps we are considering are dominant. Next we devote one section for each of the six cases. We include for completeness a description of Barth's results for $g=1$ and $r=4$. The most involved cases are $g=3$, $r=2$ and $g=4$, $r=2$. By means of the trigonal construction, the case $g=3$, $r=2$ is related with the determination of the tetragonal series on a generic quartic plane curve which do not contain divisors of type $2p+2q$. This is studied in detail in section 6. In the case $g=4$, $r=2$ we need to take care of the behaviour at the boundary of the rather sophisticated Donagi's description of the fibre of $\overline {\mathcal P}_{5}$ (see \cite[section 5]{donagi}). In particular we have to take care of the quadrics containing a nodal canonical curve of genus $5$ which we consider interesting on its own. This is the content of section 7. In section 8 we describe some examples of irreducible components of fibres of 	ramified Prym maps that yield totally geodesic or Shimura subvarieties of ${\mathcal A}_g$. \\

	\textbf{Acknowledgements:} The authors would like to thank A. Verra for his suggestions and references concerning section 6. We also thank Andr\'es Rojas for detecting a mistake in section $7$ of a previous version. The third author thanks IMUB (Institut de Matemàtica Universitat de Barcelona) for the hospitality it offered when the first draft of this project started. 
	
	\section{Preliminaries}
	
	\subsection{The differential of the ramified Prym map}\label{DIffeCov}
	
	By the theory of double coverings, the moduli space $\mathcal R_{g,r}$ can be alternatively described as 
	the following moduli space of triples $(C, \eta, B)$:
	\[
	\mathcal R_{g,r}=\{ (C, \eta,  B)  \mid  [C] \in \mathcal M_g,   \eta \in Pic^{\frac{r}{2}}(C),  B \text{ reduced divisor in } |\eta^{\otimes 2}|  \}/\cong.
	\]
	The codifferential of $\mathcal P_{g,r}$ at a point  $[(C, \eta, B)]\in \mathcal R_{g,r}$  is given by the multiplication map (\cite{mp})
	$$
	d\mathcal P_{g,r}^* (C, \eta, B): Sym^2 H^0(C, \omega_C \otimes \eta) \lra H^0(C, \omega_C^2 \otimes \mathcal O(B)).
	$$
	Let us now recall the definition of admissible covers given by Beauville in \cite[Lemma 3.1]{beau}.
		\begin{defin}
			Let $\tilde C$ be a connected curve with only ordinary double points and arithmetic genus $2g-1$, and let $\sigma$ be an involution on $\tilde C$. Then $\tilde C\ra \tilde C/\sigma$ is an admissible covering of type $(\ast)$ if the fixed points of $\sigma$ are exactly the singular points and at a singular point the two branches are not exchanged under $\sigma$.
		\end{defin}
		Under these conditions, Beauville shows that the Prym variety attached to the covering $\tilde C\ra \tilde C/\sigma$ can be defined in a similar way to the standard Prym construction and it is a principally polarized abelian variety.
	
	Let $\cov$ be an element of  $\mathcal R_{g,2}$. By glueing in $C$ the two branch points and in $D$ the two ramification points we get an instance of admissible covering of type $(\ast)$ of $\bar{\mathcal{R}}_{g+1}$.

	\begin{prop}
	\label{dominant}
		Assume that 
		\[
		(g,r) \in \{(1,2),(1,4),(2,2),(2,4),(3,2),(4,2)\},
		\]
		then the ramified Prym map  $\mathcal P_{g,r}$ is dominant. 
	\end{prop}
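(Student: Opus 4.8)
Since $B\in|\eta^{\otimes 2}|$ we have $\mathcal O_C(B)\cong\eta^2$, so the target of the codifferential recalled above is $H^0(C,\omega_C^2\otimes\eta^2)=H^0(C,L^2)$ where $L:=\omega_C\otimes\eta$. Thus $d\mathcal P_{g,r}^*(C,\eta,B)$ is exactly the multiplication map
\[
\mu_{C,\eta}\colon Sym^2H^0(C,L)\longrightarrow H^0(C,L^2),\qquad L=\omega_C\otimes\eta .
\]
Because of the strict inequality \eqref{disuguaglianza}, dominance of $\mathcal P_{g,r}$ follows as soon as $d\mathcal P_{g,r}$ is surjective at one point of $\mathcal R_{g,r}$, i.e. as soon as $\mu_{C,\eta}$ is injective for some (hence for the generic) triple $(C,\eta,B)$. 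Note that $\deg\eta=\frac{r}{2}\ge1$, so $h^0(\eta^{-1})=0$ and $h^0(L)=g-1+\frac{r}{2}$ for \emph{every} $(C,\eta)$; moreover, identifying $|L|$ with $\mathbb{P}^{g-2+\frac{r}{2}}$, the injectivity of $\mu_{C,\eta}$ is equivalent to the statement that the image $\phi_L(C)$ lies on no quadric. My plan is to verify this curve-theoretic statement for a well-chosen pair $(C,\eta)$ in each of the six cases; five of them reduce to Riemann--Roch bookkeeping, while $(g,r)=(4,2)$ requires an argument about quadric surfaces in $\mathbb{P}^3$.

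For $(g,r)=(1,2)$ one has $h^0(L)=1$ and $\mu_{C,\eta}(s\otimes s)=s^2\ne0$, so $\mu_{C,\eta}$ is injective. For $(g,r)\in\{(1,4),(2,2)\}$ one has $h^0(L)=2$; I would first check that $|L|$ is base point free (automatic when $g=1$, since $\deg L=2$ on an elliptic curve, and true by Riemann--Roch for generic $\eta\in Pic^1(C)$ when $g=2$, using $\eta\ne\mathcal O_C(p)$ for all $p$), so that $\phi_L\colon C\to\mathbb{P}^1$ and $\mu_{C,\eta}$ factors as $Sym^2H^0(\mathcal O_{\mathbb{P}^1}(1))\xrightarrow{\ \sim\ }H^0(\mathcal O_{\mathbb{P}^1}(2))\hookrightarrow H^0(C,L^2)$, hence is injective. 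For $(g,r)\in\{(2,4),(3,2)\}$ one has $h^0(L)=3$ and $\deg L=4$, resp. $5$; again $|L|$ is base point free for generic $\eta$, and since $\phi_L(C)$ spans $\mathbb{P}^2$, a factorization $C\xrightarrow{d:1}D\hookrightarrow\mathbb{P}^2$ with $\deg D=e\ge2$ would force $\deg L=de$, i.e. $(d,e)=(1,\deg L)$ apart from the single possibility $(d,e)=(2,2)$ when $\deg L=4$. But $(2,2)$ means $L=\psi^*\mathcal O_{\mathbb{P}^1}(2)$ for a degree-$2$ map $\psi$, so $\psi^*\mathcal O_{\mathbb{P}^1}(1)$ is a $g^1_2$, forced to equal $|\omega_C|$ on a genus $2$ curve, whence $\eta=\omega_C$, which we avoid. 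Thus for generic $\eta$ the map $\phi_L$ is birational onto an irreducible nondegenerate plane curve of degree $\ge4$, which lies on no conic; so $\mu_{C,\eta}$ is injective.

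The case $(g,r)=(4,2)$, where $h^0(L)=4$ and $\deg L=7=2g-1$, is the one I expect to be the real obstacle. First I would prove that $L$ is very ample for generic $\eta\in Pic^1(C)$: by Riemann--Roch $h^1(L(-p-q))=h^0(\eta^{-1}(p+q))$, and the line bundles of the form $\mathcal O_C(p+q-x)$ sweep out at most a $3$-dimensional locus in the $4$-dimensional $Pic^1(C)$, so a generic $\eta$ avoids it and $|L|$ separates points and tangent vectors. Then $C':=\phi_L(C)\subset\mathbb{P}^3$ is a smooth irreducible nondegenerate curve of degree $7$ and genus $4$, and it remains to show it lies on no quadric $Q$. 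A quadric of rank $\le2$ is a union of planes and cannot contain the nondegenerate irreducible curve $C'$. If $Q$ were smooth, $C'$ would be a divisor of type $(a,b)$ on $Q\cong\mathbb{P}^1\times\mathbb{P}^1$ with $a+b=7$ and $(a-1)(b-1)=4$, forcing $\{a,b\}=\{2,5\}$; then one of the two projections exhibits a $g^1_2$ on $C\cong C'$, impossible since a generic genus $4$ curve is not hyperelliptic. If $Q$ were a quadric cone, I would pass to its minimal resolution $\mathbb{F}_2\to Q$ (with $(-2)$-section $e$, ruling $f$, hyperplane class $h=e+2f$): since $C'$ is smooth, its strict transform $\widetilde C'$ is again a smooth genus $4$ curve, of class $\alpha e+7f$ (as $\widetilde C'\cdot h=7$), with $\widetilde C'\cdot e=7-2\alpha\ge0$, so $\alpha\le3$; but $p_a(\widetilde C')=-\alpha^2+7\alpha-6=4$ gives $\alpha\in\{2,5\}$, hence $\alpha=2$, and then $\widetilde C'\cdot f=2$ again produces a $g^1_2$ on $C$, a contradiction. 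Therefore $C'$ lies on no quadric, $\mu_{C,\eta}$ is injective, and $\mathcal P_{4,2}$ is dominant. Beyond the plain dimension counts, the delicate points are exactly the very-ampleness genericity argument and the exclusion of the quadric cone (which I have reduced above to an intersection-theoretic computation on $\mathbb{F}_2$), so this is where I would concentrate the effort.
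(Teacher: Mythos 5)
Your proof is correct, and for five of the six cases it follows the same route as the paper: the reduction to injectivity of $Sym^2 H^0(\omega_C\otimes\eta)\to H^0(\omega_C^{\otimes 2}\otimes\eta^{\otimes 2})$, i.e.\ to the absence of quadrics through $\varphi_{\omega_C\otimes\eta}(C)$, is exactly the paper's starting point; the paper is merely terser (for $h^0\le 2$ it notes there is nothing to prove --- indeed a nonzero quadratic form in two variables splits into linear factors, so injectivity is automatic even without base point freeness --- and for $(2,4),(3,2)$ it just records that the image is a plane quartic resp.\ quintic, whereas you also rule out the $2{:}1$-onto-a-conic degeneration, which is a worthwhile extra check). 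Where you genuinely diverge is $(g,r)=(4,2)$, which the paper does \emph{not} handle via the codifferential: it identifies $\mathcal R_{4,2}$ with the open part $\Delta^{n,0}$ of a boundary divisor $\Delta^n$ of $\overline{\mathcal R}_5$ and invokes Izadi's theorem that the fibre of $\overline{\mathcal P}_5$ over a generic abelian fourfold meets the boundary in dimension $1$, together with Donagi's description of the fibre over a generic Jacobian, to conclude by a dimension count that the $11$-dimensional $\Delta^n$ cannot map into the $9$-dimensional Jacobian locus and hence dominates $\mathcal A_4$. Your alternative --- very ampleness of $\omega_C\otimes\eta$ for generic $\eta\in Pic^1(C)$ by a $3<4$ dimension count, then exclusion of the degree-$7$ genus-$4$ model from smooth quadrics (type $(2,5)$) and from quadric cones (class $2e+7f$ on $\mathbb F_2$; your adjunction computation $p_a=-\alpha^2+7\alpha-6$ checks out), both of which would force a $g^1_2$ on a non-hyperelliptic curve --- is correct, elementary and self-contained. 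What the paper's route buys instead is economy: the identification with $\Delta^{n,0}$ and Izadi's boundary analysis are needed anyway in Section 7, so dominance comes essentially for free from results the authors must cite regardless, and as a by-product they locate where the fibre meets the boundary.
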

	\begin{proof}
	It is enough to show that for a generic $(C,\eta, B)$ there are no quadrics containing the image of $\varphi_{\omega_C \otimes \eta }:C \ra \mathbb P H^0(C, \omega_C \otimes \eta )^*$. Notice that there is nothing to prove in the cases $(g,r)\in\{(1,2), (2,2), (1,4)\}$. For $(g,r)=(3,2)$ and $(g,r)=(2,4)$ the curve 
	$\varphi_{\omega_C \otimes \eta }(C) $ is a plane curve (with nodes) of degree $5$ and $4$ respectively. For the case $(g,r)=(4,2)$ we identify elements $(C,\eta, p+q)\in\mathcal{R}_{4,2}$ with coverings $(C^*=C/p\sim q ,\eta^*)$ of type $(\ast)$ in $\overline{\mathcal{R}}_5$.

	Izadi proved in  \cite[Theorem (3.3), Remark (3.10)]{izadi}  that the fiber at an abelian fourfold $A$, out of the closure of the Jacobian locus and of some specific high codimension locus, intersects the boundary in dimension $1$. Therefore, by dimensional reasons, either $\Delta^n$ dominates $\mathcal A_4$ or maps in the Jacobian locus. The fiber at a generic Jacobian is given in \cite[Theorem (5.14), (4)]{donagi}. It is formed by two surfaces, one in the  locus of coverings of trigonal curves and the other in the divisor of Wirtinger coverings. Hence the $11$-dimensional divisor $\Delta^n$ cannot be mapped to the $9$-dimensional Jacobian locus.\\
	\end{proof}

	\begin{cor}
		The assumptions of the previous proposition imply that the dimension of the generic fibre $F_{g,r}$ of $\mathcal P_{g,r}$ is:
		\[
		\begin{aligned}
		&\dim F_{1,2}=1, \, \dim F_{2,2}=2, \,\dim F_{3,2}=2, \\ &\dim F_{4,2}=1, \,\dim F_{1,4}=1, \,\dim F_{2,4}=1.   
		\end{aligned}
		\]
	\end{cor}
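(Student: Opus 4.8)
The plan is to derive each dimension count directly from Proposition \ref{dominant} together with the inequality \eqref{disuguaglianza} and the explicit formulas for the dimensions of source and target. Since Proposition \ref{dominant} asserts that $\mathcal P_{g,r}$ is dominant in each of the six cases, the generic fibre has dimension equal to $\dim \mathcal R_{g,r}-\dim \mathcal A^\delta_{g-1+\frac r2}$, so the whole corollary reduces to arithmetic.

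First I would recall the two formulas already recorded in the excerpt: $\dim \mathcal R_{g,r}=3g-3+r$ for $g\ge 2$, while for $g=1$ one has $\dim\mathcal R_{1,r}=r$ (the remark after \eqref{disuguaglianza} explains this: translating so that one branch point sits at the origin leaves $1+(r-1)=r$ parameters). On the target side, $\dim\mathcal A^\delta_{g-1+\frac r2}=\frac12\bigl(g-1+\frac r2\bigr)\bigl(g+\frac r2\bigr)$, the polarization type being irrelevant for the dimension. Then I would simply tabulate: for $(1,2)$, $2-1=1$; for $(2,2)$, $5-3=2$; for $(3,2)$, $8-6=2$; for $(4,2)$, $11-10=1$; for $(1,4)$, $4-3=1$; for $(2,4)$, $7-6=1$. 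These match the six values claimed.

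The one point requiring a word of justification is that dominance of a morphism of irreducible varieties (or, here, of the relevant irreducible components) does indeed force the generic fibre to have dimension equal to the difference of dimensions; this is the standard fibre-dimension theorem, applied over $\mathbb C$ to the map on coarse moduli spaces (or on suitable étale covers/stacks to avoid the usual issues with automorphisms, which do not affect the dimension count). I expect no real obstacle here: the only care needed is to make sure one uses the correct value $\dim\mathcal R_{1,r}=r$ in the two genus-one cases rather than the naive $3g-3+r=r$, which happens to coincide — so in fact all six cases are handled uniformly by $\dim F_{g,r}=\dim\mathcal R_{g,r}-\dim\mathcal A^\delta_{g-1+\frac r2}$ with the dimensions as above. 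One could also note as a sanity check that each resulting value is positive, consistently with the strict inequality in \eqref{disuguaglianza}.
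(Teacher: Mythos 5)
Your proposal is correct and is exactly the argument the paper intends: the corollary is stated without proof precisely because, once dominance is known from Proposition \ref{dominant}, the fibre-dimension theorem reduces everything to the arithmetic $\dim F_{g,r}=\dim\mathcal R_{g,r}-\dim\mathcal A^\delta_{g-1+\frac r2}$, and your six computations all check out. Your side remark that the genus-one count $\dim\mathcal R_{1,r}=r$ happens to agree with $3g-3+r$ is a correct and harmless observation.
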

\subsection{Dual Abelian Variety}\label{dualpol}
Here we recall the main result of Birkenhake-Lange concerning dual abelian varieties and dual polarizations. 
\begin{teo}[\cite{birkenhake-langepol}, Theorem 3.1]
	There is a canonical isomorphism of
	coarse moduli spaces
	\begin{align}\label{lbiso}
	\mathcal{A}_g^{(d_1,...d_g)}&\ra\mathcal{A}_g^{(\frac{d_1d_g}{d_g},\frac{d_1d_g}{d_{g-1}},...,\frac{d_1d_g}{d_1})}\\
	(A,L)&\mapsto (A^*,L^*)\notag
	\end{align} 
	sending a polarized abelian variety to its polarized dual abelian
	variety.
\end{teo}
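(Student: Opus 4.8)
The plan is to follow the analytic construction of Birkenhake--Lange. Write $A=V/\Lambda$ with $V$ a $g$-dimensional complex vector space; a polarization $L$ on $A$ is then encoded by a positive definite Hermitian form $H$ on $V$ whose imaginary part $E=\Im H$ takes integer values on $\Lambda$, and the polarization type $(d_1,\dots,d_g)$ (with $d_1\mid\cdots\mid d_g$) is precisely the sequence of elementary divisors of $E|_\Lambda$. By the elementary divisor theorem for alternating forms over $\Zeta$ we may choose a symplectic basis of $\Lambda$ in which $E$ has matrix $\left(\begin{smallmatrix}0&D\\-D&0\end{smallmatrix}\right)$ with $D=\diag(d_1,\dots,d_g)$. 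The dual abelian variety is $A^{*}=\operatorname{Pic}^0(A)=\overline{V}^{*}/\Lambda^{*}$, where $\overline{V}^{*}$ is the space of conjugate-linear functionals $V\ra\mathbb C$ and $\Lambda^{*}=\{\ell\in\overline{V}^{*}\mid \Im\ell(\Lambda)\subseteq\Zeta\}$, and the polarization isogeny $\varphi_L\colon A\ra A^{*}$ is induced by $v\mapsto H(v,\cdot)$, with kernel $K(L)\cong(\Zeta^g/D\Zeta^g)^2$ of exponent $d_g$.

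Next I would define $L^{*}$ to be the line bundle on $A^{*}$ attached to the Hermitian form $H^{*}:=d_1 d_g\cdot H\meno$ on $\overline{V}^{*}$ (that is, the form with Gram matrix $d_1d_g$ times the inverse of the Gram matrix of $H$, transported via the identification $\overline{V}^{*}\cong V$ furnished by $H$). Since $H$ is positive definite, so is $H^{*}$, hence $L^{*}$ is a genuine polarization. The core computation is then to check, in the basis of $\Lambda^{*}$ dual to the chosen symplectic basis, that $\Im H^{*}$ is integer valued on $\Lambda^{*}$ and that its matrix there equals $\left(\begin{smallmatrix}0&D'\\-D'&0\end{smallmatrix}\right)$ with $D'=\diag(\tfrac{d_1d_g}{d_g},\dots,\tfrac{d_1d_g}{d_1})$, after reordering the dual basis so that the entries increase. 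Here $d_i\mid d_g$ guarantees integrality of the entries and $d_1\mid d_2\mid\cdots$ guarantees the divisibility chain, so $(\tfrac{d_1d_g}{d_g},\dots,\tfrac{d_1d_g}{d_1})$ is a legitimate polarization type and $(A^{*},L^{*})$ has exactly that type; this identifies the target moduli space in the statement.

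Finally I would verify canonicity and invertibility. The scalar $d_1d_g$ depends only on the (intrinsic) type of $L$, and $H\meno$ is intrinsic to $H$, so $L^{*}$ is well defined up to translation, i.e. defines a well-defined point of $\mathcal A_g^{(\tfrac{d_1d_g}{d_g},\dots,\tfrac{d_1d_g}{d_1})}$; moreover an isomorphism $(A_1,L_1)\cong(A_2,L_2)$ transports through the construction to $(A_1^{*},L_1^{*})\cong(A_2^{*},L_2^{*})$. Since everything is polynomial in a period matrix, the construction works in families and hence induces a morphism of coarse moduli spaces. For invertibility, note that the extreme elementary divisors of $L^{*}$ are again $\tfrac{d_1d_g}{d_g}=d_1$ and $\tfrac{d_1d_g}{d_1}=d_g$, so the normalizing scalar used to dualize $L^{*}$ is once more $d_1d_g$, and $d_1d_g\,(H^{*})\meno=d_1d_g\cdot\frac{1}{d_1d_g}H=H$ under the canonical identification $A^{**}=A$; thus $(\,\cdot\,)^{*}$ is a (canonical) involution between the two moduli spaces, hence an isomorphism. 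The only genuinely delicate point is the middle paragraph: pinning down the precise type of $L^{*}$, rather than merely exhibiting $L^{*}$ as some polarization, requires careful bookkeeping of $\Lambda^{*}$ relative to the symplectic basis and of the identification $\overline{V}^{*}\cong V$, which is exactly where the value $d_1d_g$ of the normalizing constant is forced (smaller and $\Im H^{*}$ fails to be integral, larger and the type is wrong).
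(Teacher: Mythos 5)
The paper does not actually prove this statement: it is imported verbatim from Birkenhake--Lange, so there is no internal proof to compare against. Your sketch reconstructs the argument of the original source in essentially the standard way (Hermitian form $H$, symplectic basis, dual lattice $\Lambda^{*}$, normalized inverse form $d_1d_g\,H^{-1}$), and the core computation you describe does come out as claimed: in the basis of $\Lambda^{*}$ dual to a symplectic basis, $\Im(d_1d_gH^{-1})$ has symplectic-diagonal entries $d_1d_g/d_i$, which form a divisibility chain and hence are the elementary divisors, and $(\,\cdot\,)^{*}$ squares to the identity because the extreme divisors of the dual type are again $d_1$ and $d_g$. So the proposal is correct in substance.

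Two small points deserve attention. First, your closing parenthetical claim that a smaller normalizing constant would make $\Im H^{*}$ non-integral is false when $d_1>1$: integrality only needs $d_g\mid c$, so $c=d_g$ already yields a genuine polarization on $A^{*}$, of type $(1,d_g/d_{g-1},\dots,d_g/d_1)$. What forces $c=d_1d_g$ is not integrality but the requirement that the resulting type be the one displayed in the theorem, equivalently that dualizing be an involution ($\lambda_{(L^{*})^{*}}=c\,\lambda_{L^{*}}^{-1}=\lambda_L$ exactly when $c=e_1e_g=d_1d_g$). Second, note that the characterizing property recorded in the paper right after the theorem, $\lambda_{L^{*}}\circ\lambda_{L}=(d_g)_{A}$, is consistent with the displayed type only when $d_1=1$ (which is the case in every application made in the paper); your normalization $d_1d_g\,H^{-1}$, i.e.\ $\lambda_{L^{*}}\circ\lambda_{L}=(d_1d_g)_{A}$, is the one that actually matches the stated type and the original reference.
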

Here $L^*$  is the polarization on the dual abelian variety $A^*$ which satisfies \[\lambda_{L^*}\circ\lambda_{L}=(d_g)_{A} \quad \text{and}\quad \lambda_L\circ\lambda_{L^*}=(d_g)_{A^*},\]
where $\lambda_L: A\ra A^* $, $\lambda_{L^*}: A^*\ra (A^*)^*=A$ are the polarization maps and $ (d_g)_{A}:A\ra A $, $ (d_g)_{A^*}:A^*\ra A^*$ are the multiplications by $d_g$. 

Notice that the dual polarization $L^*$ satisfies $(L^*)^*=L$.
\subsection{The polygonal construction}\label{polygonal}
This section is devoted to the description of the so-called polygonal construction. It provides a very useful tool which starts from a ``tower'' of coverings
\[A\ra B\ra C\]and produces new ones: $ A'\ra B'\ra C', A''\ra B''\ra C'',...$ 
determining relations among the Prym varieties. All details of this construction are borrowed from \cite{donagi}.\\

Let us consider a curve $C$ of genus $g$ with a map  $f:C\to\mathbb{P}^1$  of degree $n$ and a $2$-sheeted ramified covering $\cov$. Then we can always associate a $2^n$- covering
\begin{equation*}
D'\to \mathbb{P}^1
\end{equation*}
defined in the following way: the fibre over a point $p\in \mathbb{P}^1$ is given by the $2^n$ sections $s$ of $\pi$ over $p$. This means that:
\begin{equation}\label{sezioni}
s: f^{-1}(p)\to\pi^{-1}f^{-1}(p) \quad \text{and}\; \pi\comp s=id .
\end{equation} 
$D'$ can be better described inside $D^{(n)}$, where $D^{(n)}$, as usual, represents the $n$-symmetric product of the curve $D$ and it parametrizes effective divisors of degree $n$. Indeed it can be described by the following fibre product diagram:
\begin{equation}\label{poligconstr}
\begin{tikzcd}
D'\arrow{d}{2^n:1}\arrow[hook]{r}&D^{(n)}\arrow{d}{\pi^{(n)}}\\
\PP\arrow[hook]{r}&C^{(n)}
\end{tikzcd}
\end{equation}
where $\PP$ is embedded in $C^{(n)}$  by sending a point $ p $ to its fibre $f\meno(p)$.

$D'$ carries a natural involution $i':D'\to D'$ defined as follows: \begin{equation}\label{involutione}
q_1+...+q_n\mapsto i(q_1)+...+i(q_n)
\end{equation} where $i$ is the involution of $D$ which induces the covering $\pi$. Moreover we can define an equivalence relation on $D'$ identifying two sections $s_i,s_j$ if they correspond under an even number of changes $q_i\mapsto i(q_i)$. This gives another tower 
\begin{equation*}
D'\ra O\ra \PP,
\end{equation*}
where $O$ is the quotient obtained considering this equivalence. It is known as the orientation cover of $f\circ \pi$. 

We conclude recalling, without proof, a result shown in \cite{donagi}.
\begin{prop}\label{Reducible}
	If $D$ is \textit{orientable}, that means that the orientation cover $O\ra\PP$ is trivial, then $D'$ is reducible: $D'=D^0\cup D^1$.
	\begin{itemize}
		\item[-] If $n$ is even then $i'$ acts on each $D^j$ and the quotient has a degree $2^{n-2}$ map to $\PP$;
		\item[-] If $n$ is odd then $i'$ exchanges the two branches $D^j$. Each $D^j$ has a map of degree $2^{n-1}$ to $\PP$.
	\end{itemize}
\end{prop}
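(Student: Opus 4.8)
The plan is to reduce everything to the combinatorics of the sections over a general point of $\PP$, and then to read off the boundary behaviour by passing to closures (or quoting \cite{donagi}). Fix a general $p\in\PP$, write $f^{-1}(p)=\{x_1,\dots,x_n\}$ and $\pi^{-1}(x_k)=\{q_k,i(q_k)\}$; a point of $D'$ over $p$ is a choice of one of the two points over each $x_k$, so the fibre of $D'\to\PP$ has $2^n$ elements and $i'$ acts on it by flipping all $n$ choices at once. Two sections $s,s'$ over $p$ determine the set $T(s,s')=\{k:s(x_k)\neq s'(x_k)\}\subseteq f^{-1}(p)$; since $T(s,s'')=T(s,s')\mathbin{\triangle}T(s',s'')$, declaring $s\sim s'$ when $|T(s,s')|$ is even is an equivalence relation with exactly two classes, each of size $2^{n-1}$. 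This is precisely the relation defining the tower $D'\to O\to\PP$ of the polygonal construction above, so $O\to\PP$ has degree $2$ and $D'\to O$ has degree $2^{n-1}$.

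Now suppose $D$ is orientable, i.e. $O\cong\PP\sqcup\PP$. Since $D'\to\PP$ factors through $O$, pulling $D'\to O$ back to the two copies of $\PP$ gives $D'=D^0\cup D^1$ with each $D^j\to\PP$ of degree $2^{n-1}$ (the base change of $D'\to O$); this is the asserted reducibility together with the degree count. To locate $i'$ in this picture, note first that $i'$ respects the equivalence relation — flipping all choices in $s$ and in $s'$ does not change which $x_k$'s they disagree on, so $T(i's,i's')=T(s,s')$ — hence $i'$ descends to an involution of $O$. Since $T(s,i's)=f^{-1}(p)$ has $n$ elements, $s$ and $i's$ lie in the same class exactly when $n$ is even; therefore the induced involution of $O=\PP\sqcup\PP$ is the identity if $n$ is even and the swap of the two copies if $n$ is odd.

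Transporting this back to $D'$: when $n$ is even, $i'(D^j)=D^j$, and as $i'$ has no fixed point on a general fibre (because $T(s,i's)\neq\emptyset$) the quotient $D^j/i'$ maps to $\PP$ with degree $2^{n-1}/2=2^{n-2}$; when $n$ is odd, $i'$ interchanges $D^0$ and $D^1$, and each still carries a degree-$2^{n-1}$ map to $\PP$. The only delicate point is that the equalities ``$2^n$ sheets over $p$'', etc., hold only over the open set of $\PP$ away from the branch locus of $f$ and from the images in $\PP$ of the branch points of $\pi$; over a branch point of $f$ the monodromy permutes the $x_k$ and hence acts within each orientation class, so the closures $\overline{D^0},\overline{D^1}$ stay separate, while over an image of a branch point of $\pi$ a section loses a choice and the picture genuinely degenerates. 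I expect this boundary bookkeeping — not the combinatorial core above — to be the main obstacle, and it is most cleanly handled by working with the normalizations of the curves involved (or simply by citing the corresponding assertion in \cite{donagi}).
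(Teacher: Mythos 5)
Your argument is correct: the paper states this proposition without proof, simply citing \cite{donagi}, and your combinatorial treatment via the disagreement set $T(s,s')$ and its symmetric-difference additivity is exactly the standard argument behind Donagi's statement. In particular the key observations — that $i'$ preserves the parity classes because $T(i's,i's')=T(s,s')$, that $|T(s,i's)|=n$ decides whether $i'$ fixes or swaps the two components, and that freeness of $i'$ on a general fibre gives the degree $2^{n-2}$ in the even case — are all sound, and your closing caveat about working away from the branch loci is the right place to be careful.
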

\subsection{The bigonal construction}\label{bigonal}
Let us see an application of the polygonal construction described above in case of  $n=2$. Starting from a tower 
\[D\xrightarrow{\pi} C\xrightarrow{f}\PP,\] where $\pi$ and $f$ both have degree 2, we get \[D'\xrightarrow{\pi'} C'\xrightarrow{f'}\PP,\] by means of a fibre product diagram as in diagram \eqref{poligconstr}. Taking $k\in\PP$ the possible situations are the following (see \cite{donagi}, pp. 68-69):
\begin{itemize}
	\item[1)] If $\pi,f$ are \'{e}tale the same are $\pi',f'$;
	\item[2)] If $f$ is \'{e}tale while $\pi$ is branched at one point of $f^{-1}(k)$, then $h$ inherits two critical points of order 2 in the fibre which are exchanged by $i$. This means that $\pi'$ is \'{e}tale, while $f'$ is branched;
	\item[3)] Viceversa if $\pi$ is \'{e}tale while $f$ is branched in $k$, then $h$ has a critical point of order 2 in the fibre and 2 more points which are exchanged by $i'$. This means that $\pi'$ has a critical point of order 2 while $f'$ is \'{e}tale;
	\item[4)] If $\pi,f$ are both branched the same are $\pi',f'$ (in particular $h$ has a single critical point of order 4);
	\item[5)] If $f$ is \'{e}tale while $\pi$ is branched at both points then $C'$ will have a node over $k$. 
	
\end{itemize}
We call an element $D\ra C\ra\PP$ \textit{general} if it avoids situations of type 5) (where the bigonal construction induces singular coverings).
\begin{prop}
	Assuming $f\circ\pi$ general,
	then $g(D')=r+g-2$ and $g(C')=\frac{r}{2}-1$.
	\begin{proof}
		By the assumption of generality, a straightforward application of Riemann-Hurwitz formula for $h$ gives:
		\begin{equation*}
		2g(D')-2=4(-2)+2(r-r')+6-r'+3r'.
		\end{equation*}
		Similarly for $\pi'$, we get: \begin{equation*}
		2g(D')-2=2(2g(C')-2)+6-r'+r'.
		\end{equation*}
	\end{proof}
\end{prop}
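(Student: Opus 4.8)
The plan is to obtain both genera from the Riemann--Hurwitz formula: the value of $g(D')$ from the degree-$4$ cover $h\colon D'\to\PP$ produced by the polygonal construction, and the value of $g(C')$ from the double cover $\pi'\colon D'\to C'$. First I would note that the genericity hypothesis — that the tower $D\to C\to\PP$ avoids situation 5) — is precisely what forces $D'\subset D^{(2)}$ and $C'$ to be smooth curves, so that Riemann--Hurwitz applies; I also tacitly assume that $D'$, hence $C'$, is connected (see Proposition \ref{Reducible}).

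For $g(D')$ I would run through the branch locus of $h$ using the local case analysis above. Write $r'$ for the number of branch points of $\pi$ that are also ramification points of $f$. Then, among the $2g+2$ branch points of $f$, exactly $r'$ are of type 4) and $2g+2-r'$ are of type 3); in addition there are $r-r'$ further points of $\PP$, of type 2), lying under the remaining branch points of $\pi$; everywhere else $h$ is \'etale. Over a point of type 4) the cover $h$ has a single ramification point of index $4$; over a point of type 3), a single ramification point of index $2$; over a point of type 2), two ramification points of index $2$, exchanged by $i'$. Hence Riemann--Hurwitz for $h$ reads
\[
2g(D')-2=4(-2)+3r'+\bigl((2g+2)-r'\bigr)+2(r-r'),
\]
and the $r'$-terms cancel, leaving $g(D')=g+r-2$.

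For $g(C')$ I would count the branch points of $\pi'$ from the same trichotomy: $\pi'$ is \'etale over every point of type 2), whereas over each point of type 3) and each point of type 4) it acquires exactly one (simple) branch point; thus $\pi'$ has $2g+2$ branch points altogether. Substituting $g(D')=g+r-2$ into
\[
2g(D')-2=2\bigl(2g(C')-2\bigr)+(2g+2)
\]
and solving gives $g(C')=\tfrac{r}{2}-1$, which is an integer since $r$ is even.

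Both Riemann--Hurwitz steps are routine; the point that requires care — and essentially the only content beyond the list of cases on the preceding pages — is the local bookkeeping in the three relevant cases: that a doubly branched fibre contributes a single $h$-ramification point of index $4$ and a single branch point of $\pi'$; that a fibre of type 3) contributes a single $h$-ramification point of index $2$ and a single branch point of $\pi'$; and that a fibre of type 2) contributes two distinct $h$-ramification points of index $2$, swapped by $i'$, over which $\pi'$ stays \'etale. Once these local pictures are pinned down, the auxiliary quantity $r'$ disappears and both genus formulas follow at once.
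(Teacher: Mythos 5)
Your proof is correct and follows the same route as the paper's: Riemann--Hurwitz applied to $h$ and to $\pi'$, with the local contributions read off from cases 2)--4) of the bigonal construction and the auxiliary count $r'$ cancelling in both equations. If anything your version is slightly more careful, since you write the $2g+2$ branch points of $f$ in general where the paper's displayed formulas hard-code the value $6$ (i.e.\ the case $g=2$, the only one used later), and you explicitly flag the connectedness of $D'$ and the role of excluding situation 5).
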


\begin{lemma}[\cite{donagi}, Lemma 2.7]
	The bigonal construction is symmetric: if it takes $D\xrightarrow{\pi} C\xrightarrow{f}\PP$ to $	D'\xrightarrow{\pi'} C'\xrightarrow{f'}\PP$ then it takes $	D'\xrightarrow{\pi'} C'\xrightarrow{f'}\PP$ to $	D\xrightarrow{\pi} C\xrightarrow{f}\PP$.
\end{lemma}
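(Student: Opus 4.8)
The plan is to prove the symmetry by a local-to-global argument, combining a pointwise analysis over $\PP$ with the functoriality of the fibre product that defines the construction. First I would fix a general tower $D\xrightarrow{\pi}C\xrightarrow{f}\PP$ and recall that $D'$ is cut out inside $D^{(2)}$ by the fibre product with $\PP\hookrightarrow C^{(2)}$ as in \eqref{poligconstr}, so that a point of $D'$ over $k\in\PP$ is a choice of one preimage in $D$ over each of the two points of $f^{-1}(k)$; the curve $C'$ is the quotient of $D'$ by the equivalence relation "differ by an even number of swaps $q_i\mapsto i(q_i)$", and $f':C'\to\PP$ records the underlying point $k$. The key structural observation is that when both $\pi$ and $f$ are étale over $k$ with $f^{-1}(k)=\{c_1,c_2\}$ and $\pi^{-1}(c_j)=\{q_j,q_j'\}$, the fibre $(f')^{-1}(k)$ consists of the two classes $\{q_1+q_2,\ q_1'+q_2'\}$ and $\{q_1+q_2',\ q_1'+q_2\}$; running the construction again on $D'\to C'\to\PP$ over the same $k$ then returns exactly the pairs of sections that reconstitute the original $\{c_1,c_2\}$ together with the $\pi$-structure, giving a canonical identification of the twice-applied tower with the original one over each such $k$.

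The second step is to carry out this matching at the special points. Using the case analysis 1)--5) from \S\ref{bigonal}: over a point of type 1) everything is étale and the identification above applies; over type 2) (where $f$ étale, $\pi$ branched at one point of $f^{-1}(k)$) the construction produces type 3), and re-applying it sends type 3) back to type 2); type 4) (both branched) is self-dual; and generality excludes type 5). In each case I would check that the natural gluing of the local identifications extends across the point, i.e. that the pointwise bijection $(D'')\to D$ is a morphism of coverings of $\PP$ in a neighbourhood of every branch point, and likewise $C''\to C$. Concretely this is the statement that the double-cover data (the line bundles $\eta_D$ on $C$ and $\eta_{C'}$ on $\PP$, with their sections cutting out the branch divisors) are interchanged; one can phrase the whole argument bundle-theoretically by tracking how the norm and the $\pm1$-eigensheaves transform under the fibre product, which makes the extension across special fibres automatic once the generic identification is established.

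Alternatively, and perhaps more cleanly, I would invoke Galois theory: the compositum $D\times_{\PP}C$ (or rather the normalization of the relevant component) carries an action of a group of order $8$ — the wreath-type extension of $(\Zeta/2)^2$ by the swap — and both the original tower and the twice-bigonal tower are recovered as quotients of this common cover by the two distinguished order-$2$ subgroups, which are exchanged by an automorphism of the group; symmetry is then the statement that this automorphism is an involution, which is immediate. The main obstacle I anticipate is purely bookkeeping at the ramification points: one must be careful that the "general" hypothesis is exactly what is needed so that the various normalizations behave well and no extra nodes appear when the construction is iterated, and that the equivalence relation defining $C'$ (hence $C''$) is compatible with the identifications on the nose rather than merely up to the ambient symmetric-product automorphisms. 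Since this lemma is quoted verbatim from \cite[Lemma 2.7]{donagi}, I would in practice simply refer to that proof, but the argument sketched here reconstructs it.
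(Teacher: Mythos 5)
The paper itself gives no proof of this lemma: it is quoted directly from Donagi (\emph{The fibers of the Prym map}, Lemma 2.7), exactly as you anticipate in your closing sentence, so there is no in-paper argument to compare against. Your reconstruction is nevertheless essentially Donagi's actual proof: the pointwise identification of $(f')^{-1}(k)$ with the two classes $\{q_1+q_2,\,q_1'+q_2'\}$ and $\{q_1+q_2',\,q_1'+q_2\}$, the check that the local types $1)$--$5)$ behave as you describe (with $2)\leftrightarrow 3)$ swapped and $1)$, $4)$ self-dual, type $5)$ excluded by generality), and the Galois-theoretic packaging via the order-$8$ group $(\Zeta/2)^2\rtimes\Zeta/2\cong D_4$ whose outer automorphism exchanges the two chains of subgroups and squares to an inner one. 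One imprecision worth fixing: the degree-$8$ cover carrying the dihedral action is the Galois closure of the composite degree-$4$ map $f\circ\pi:D\to\PP$, not $D\times_{\PP}C$ as you wrote --- the latter has the right degree but is reducible, splitting into two components each isomorphic to $D$ (the graph of $\pi$ followed by $f$, and its twist by the involution of $f$), so it cannot support a transitive $D_4$-action; your parenthetical hedge about normalizing ``the relevant component'' does not rescue this, and you should replace that object by the Galois closure. With that correction the argument is complete and matches the source the paper cites.
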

Moreover the following holds:
\begin{teo}[Pantazis,\cite{Pantazis}]\label{Pantazis}
	The Prym varieties $P(D,C)$ and $P(D',C')$ associated to the two bigonally-related covering maps $D\ra C$ and $D'\ra C'$ are dual each other as polarized abelian varieties. 
\end{teo}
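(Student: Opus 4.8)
The plan is to realise the duality concretely, following Pantazis, by means of the incidence correspondence between $D$ and $D'$ that is built into the bigonal construction, and then to read off the effect on polarizations.

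\textbf{Step 1: the incidence correspondence.} Recall from \S\ref{polygonal} that a point of $D'\subset D^{(2)}$ is a section $s=d_1+d_2$ of $\pi$ lying over a fibre of $f$, with $\pi(d_1)\ne\pi(d_2)$. I would set
\[
\Sigma=\{(d,s)\in D\times D'\ :\ d\in\operatorname{supp}(s)\},
\]
a curve with two projections $p:\Sigma\to D$ and $p':\Sigma\to D'$ of degree $2$. A local computation over each point of $\PP$ (here the hypothesis that $f\comp\pi$ is general, i.e.\ avoids case~5) of \S\ref{bigonal}, enters) shows that $\Sigma$ is smooth and is the Galois closure of the degree-$4$ cover $f\comp\pi:D\to\PP$, whose monodromy group is generically the dihedral group $G=\langle\rho,\tau\mid\rho^4=\tau^2=1,\ \tau\rho\tau=\rho^{-1}\rangle$ of order $8$: in this picture $D=\Sigma/\langle\tau\rangle$, $C=\Sigma/\langle\rho^2,\tau\rangle$, $D'=\Sigma/\langle\rho\tau\rangle$, $C'=\Sigma/\langle\rho^2,\rho\tau\rangle$, while the covering involutions $\sigma$ of $D/C$ and $\sigma'$ of $D'/C'$ are both induced by $\rho^2$. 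By the symmetry of the bigonal construction (\cite[Lemma~2.7]{donagi}), ${}^t\Sigma$ is the incidence correspondence attached to the tower $D'\to C'\to\PP$, so the whole configuration is symmetric under $D\leftrightarrow D'$. Write $\phi=\Sigma_*:JD\to JD'$ and $\psi=({}^t\Sigma)_*:JD'\to JD$.

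\textbf{Step 2: $\phi$ induces an isogeny $P(D,C)\to P(D',C')$ with $\psi\comp\phi=2$.} First I would note that $\Sigma$ is invariant under $\sigma\times\sigma'$ (if $d\in s$ then $\sigma(d)\in\sigma'(s)$), whence $\sigma'^{*}\comp\phi=\phi\comp\sigma^{*}$; since $P(D,C)$ is the image of $\id_{JD}-\sigma^{*}$ and $P(D',C')$ the image of $\id_{JD'}-\sigma'^{*}$, this forces $\phi(P(D,C))\subseteq P(D',C')$. Next I would compute the self-correspondence ${}^t\Sigma\comp\Sigma\subset D\times D$: over a general point of $\PP$ it equals $2\Delta_D+T$, where $T=\{(d_1,d_2):(f\comp\pi)(d_1)=(f\comp\pi)(d_2),\ \pi(d_1)\ne\pi(d_2)\}$. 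The ``common fibre of $f\comp\pi$'' correspondence equals $\Delta_D+\Gamma_{\sigma}+T$ and induces $(f\comp\pi)^{*}\comp\Nm_{f\comp\pi}=0$ on $JD$ (since $J\PP=0$), while $\Delta_D+\Gamma_{\sigma}$ induces $\pi^{*}\comp\Nm_{\pi}=\id_{JD}+\sigma^{*}$; hence $T_*=-(\id_{JD}+\sigma^{*})$ and
\[
\psi\comp\phi=2\,\id_{JD}+T_*=\id_{JD}-\sigma^{*},
\]
which is multiplication by $2$ on $P(D,C)$. By the symmetry of the configuration, $\phi\comp\psi=2$ on $P(D',C')$, so $\phi$ is an isogeny with inverse $\tfrac12\psi$ in the isogeny category.

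\textbf{Step 3: the polarizations are dual.} Let $L=\Theta_D|_{P(D,C)}$ and $L'=\Theta_{D'}|_{P(D',C')}$ be the (restricted) Prym polarizations. Using the principal autoduality of $JD,JD'$ and the fact that the transpose correspondence induces the adjoint homomorphism with respect to the $\Theta$-polarizations, one gets $\widehat{\phi}\comp\lambda_{L'}=\lambda_L\comp\psi$; combined with $\psi\comp\phi=2_{P(D,C)}$ this yields $\phi^{*}L'\equiv L^{\otimes2}$ and $\deg\phi=\deg\lambda_L$. On the other hand, from the genus formula of the previous subsection ($g(C')=\tfrac r2-1$, whence $\pi'$ is ramified in $r'=2g+2$ points) one checks that $\dim P(D',C')=\dim P(D,C)$ and that the polarization type of $L'$ — namely $(1,\dots,1,2,\dots,2)$ with $\tfrac r2-1$ two's — is exactly the type of the dual polarization $L^{*}$ of \S\ref{dualpol} (the largest elementary divisor being $2$). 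To upgrade ``same type'' to the actual identification $(P(D',C'),L')\cong(P(D,C),L)^{*}$ I would decompose $J\Sigma$ under $\mathbb{Q}[G]$: both Pryms lie in the isotypic component of the unique $2$-dimensional irreducible representation $V$ of $G$, the first being carried by the $\tau$-fixed line of $V$ and the second by the $\rho\tau$-fixed line; the restriction of $\Theta_\Sigma$ to this component then induces on these two lines a polarization and its dual, the factor $2$ in the Birkenhake--Lange relation $\lambda_{L^{*}}\comp\lambda_L=2_{P(D,C)}$ coming from the relative position of the two reflection lines inside $V$. Invoking \S\ref{dualpol} then gives the theorem.

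\textbf{Where the work is.} Two points need genuine care. The first is the local analysis underlying Steps~1 and~2 (smoothness of $\Sigma$, the equality ${}^t\Sigma\comp\Sigma=2\Delta_D+T$, and the $\sigma\times\sigma'$-invariance), to be carried out over the branch points of $f$ and of $\pi$ — cases~1)--4) of \S\ref{bigonal} — and which also explains why case~5) is precisely the obstruction to the construction. The second, the heart of Pantazis' theorem, is the end of Step~3: passing from ``$\phi^{*}L'\equiv L^{\otimes2}$, $\deg\phi=\deg\lambda_L$'' to the precise duality of polarized abelian varieties amounts to controlling $\ker\phi$ (equivalently, the $\mathbb{Z}[G]$-module $H_1(\Sigma,\mathbb{Z})$ along the $2$-dimensional representation), and this is the delicate computation. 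The matching of dimensions and of polarization types recorded above serves as a consistency check throughout.
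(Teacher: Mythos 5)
First, a remark on the comparison you asked for: the paper does not prove this statement at all --- Theorem \ref{Pantazis} is quoted from Pantazis's article and used as a black box --- so there is no argument of the authors to measure yours against; what follows is an assessment of your sketch on its own terms.

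Your Steps 1 and 2 are correct and are the standard way to organize the bigonal construction: $\Sigma$ is the $D_4$-Galois closure of $f\comp\pi$, the quotient diagram by the two classes of reflections and by the centre is as you describe, and the identity ${}^t\Sigma\comp\Sigma=2\Delta_D+T$ together with the vanishing of the full fibre correspondence of $f\comp\pi$ on $JD$ does give $\psi\comp\phi=\id-\sigma^*=2_{P(D,C)}$; the bookkeeping ($r'=2g+2$, equality of dimensions, type of $L'$ equal to the type of $L^*$, $\deg\phi=\deg\lambda_L=4^{g}$) is also right. The genuine gap is in Step 3, and it is exactly where you say the work is. The identities $\phi^*L'\equiv L^{\otimes 2}$ and $\deg\phi=\deg\lambda_L$ are satisfied by many non-dual pairs, and the assertion that $\Theta_\Sigma$ restricted to the $V$-isotypic component induces mutually dual polarizations on the two reflection-fixed pieces is not established by anything in your sketch: the ``relative position of the two reflection lines inside $V$'' only sees the rational representation, whereas duality is a statement about the integral lattice $H_1(\Sigma,\Zeta)$. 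Concretely, both $\ker\phi$ and $K(L)=\ker\lambda_L$ are subgroups of $P(D,C)[2]$ isomorphic to $(\Zmod)^{2g}$, and the one thing that remains to be proved is that they coincide. (This is worth isolating, because once it is known, duality follows formally from what you already have: writing $\phi=\beta\comp\lambda_L$ with $\beta:\widehat{P(D,C)}\ra P(D',C')$ an isomorphism, the relations $\psi\comp\phi=2$ and $\widehat{\phi}\comp\lambda_{L'}=\lambda_L\comp\psi$ give $\psi\comp\beta=\lambda_{L^*}$ and then $\widehat{\beta}\comp\lambda_{L'}\comp\beta=\lambda_{L^*}$, i.e.\ $\beta^*L'=L^*$.) So your proposal is a correct and useful reduction of Pantazis's theorem to the single lattice-theoretic identification $\ker\phi=K(L)$, but that identification --- the actual content of the theorem --- is asserted rather than proved.
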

\subsection{The trigonal construction}
Thanks to the work of Recillas (\cite{ReciTrig}), we have a theorem concerning the polygonal construction in the case $n=3$. It is known as \textit{trigonal construction} and it deals with \'{e}tale double covers of smooth trigonal curves. 

Denote $\mathcal{R}_{g+1}^{tr}$ the moduli space of 2:1 \'{e}tale coverings of trigonal curves $\tilde C$ of genus $g+1$. Each point in $\mathcal{R}_{g+1}^{tr}$ corresponds to a triple $(\tilde C, \eta, M)$, where $\eta\in\text{Pic}^0(\tilde C)$ such that $\eta\neq\OO_C$ and $\eta^2=\OO_{\tilde C}$ gives the double covering and $M$ is the $g^1_3$ which gives the map to $\PP.$ This means that we consider towers \begin{equation}
\tilde D\xrightarrow{\tilde \pi}\tilde C\xrightarrow{3:1} \PP.
\end{equation}
Now call $\mathcal{M}^{tet}_{g,0}$ the locus in $\mathcal{M}_g$ given by tetragonal curves $X$ with the property that above each point of $\PP$ the associated linear series $g^1_4$ has at least one \'{e}tale point. In \cite{ReciTrig} Recillas showed (see \cite{lange-birkenhake} for details of the construction) that: \begin{teo}The trigonal construction gives the following isomorphism:\begin{align*}
	T_0: \mathcal{R}_{g+1}^{tr}&\ra\mathcal{M}^{tet}_{g,0}\\
	(\tilde C, \eta, M)&\mapsto(X,F),
	\end{align*}
	where $F$ is a $g^1_4$.\\
	Moreover, calling $P(\tilde \pi)$ the Prym variety associated to $\tilde \pi$, we have: \begin{equation*}
	P(\tilde \pi)\cong JX \label{Isom}
	\end{equation*}
	as isomorphism of principally polarized abelian variety (from now on denoted by ppav). 
\end{teo}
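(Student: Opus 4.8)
The plan is to realise both $T_0$ and its inverse through the polygonal construction of \S\ref{polygonal} with $n=3$, check that they are mutually inverse in families, and then produce the isomorphism of principally polarized abelian varieties directly from an Abel--Prym type morphism. Given $(\tilde C,\eta, M)\in\mathcal R^{tr}_{g+1}$, first apply the polygonal construction to the tower $\tilde D\xrightarrow{\tilde\pi}\tilde C\xrightarrow{f}\PP$ with $\deg f=3$. Since $\tilde\pi$ is étale, over a disc neighbourhood of any branch point of $f$ the covering $\tilde\pi$ trivialises, and a monodromy count shows the orientation cover of $f\circ\tilde\pi$ is unramified over $\PP$, hence trivial; thus $\tilde D$ is orientable and by Proposition \ref{Reducible} we get $D'=D^0\sqcup D^1$ with $i'$ interchanging the two pieces, each carrying a degree $2^{3-1}=4$ map $F$ to $\PP$. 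Put $(X,F):=(D^0,F)$. Because $\tilde\pi$ contributes no branching, the only ramification of $F$ sits over the branch points of $f$, where a local analysis entirely parallel to cases (1)--(5) of \S\ref{bigonal} shows that $F$ acquires exactly one simple ramification point together with two further reduced points; Riemann--Hurwitz then gives $g(X)=g$, and the reduced points over every such $p\in\PP$ (and the four distinct points over a general $p$) show that the $g^1_4$ defined by $F$ has an étale point above every point of $\PP$, i.e. $(X,F)\in\mathcal M^{tet}_{g,0}$.

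Conversely, starting from $(X,F)\in\mathcal M^{tet}_{g,0}$, form inside $X^{(2)}$ the curve $\tilde D$ whose fibre over a generic $p\in\PP$ consists of the six divisors $x_i+x_j$ ($i\ne j$) contained in $F^{-1}(p)=\{x_1,\dots,x_4\}$; the three partitions of $F^{-1}(p)$ into complementary pairs define a degree $3$ covering $f\colon\tilde C\to\PP$, hence a $g^1_3$ which we call $M$, together with the obvious $2:1$ map $\tilde D\to\tilde C$ sending $x_i+x_j$ to the partition in which it sits. The hypothesis that the $g^1_4$ has an étale point over every $p$ is exactly what forces this double covering to be étale, so it is given by a $2$-torsion line bundle $\eta\ne\OO_{\tilde C}$. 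One then compares the two constructions over a general fibre: the $2^3$ sections of $\tilde\pi$ over $f^{-1}(p)=\{c_1,c_2,c_3\}$ fall into the four elements of $D^0$, and recovering the pairings of $F^{-1}(p)$ from these sections returns the three points $c_i$; this combinatorial bookkeeping shows that $T_0$ and the construction above are mutually inverse bijections, and carrying it out over a base upgrades the statement to an isomorphism of coarse moduli spaces.

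For the Prym isomorphism, note that both members have dimension $g$: indeed $g(\tilde D)=2(g+1)-1=2g+1$, so $\dim P(\tilde\pi)=g=\dim JX$. Compose the inclusion $X=D^0\hookrightarrow\tilde D^{(3)}$ with a sum map $\tilde D^{(3)}\to J\tilde D$ (fixing a base point) to obtain a morphism $u\colon X\to J\tilde D$. For $x=q_1+q_2+q_3\in D^0$ one has $\tilde\pi_*x=c_1+c_2+c_3=f^{-1}(F(x))$, a divisor moving in the fixed linear series $M$; hence $\Nm_{\tilde\pi}\circ u$ is constant and, after normalising the base point, $u$ maps $X$ into the Prym variety $P(\tilde\pi)\subset J\tilde D$. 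The induced homomorphism $u_*\colon JX\to P(\tilde\pi)$ of $g$-dimensional abelian varieties is surjective (because $u^*$ is injective on holomorphic $1$-forms), and a computation of the pull-back under $u$ of the principal polarization of $P(\tilde\pi)$ identifies it with the theta divisor of $JX$; since a surjective homomorphism pulling a principal polarization back to a principal polarization is an isomorphism, one gets $P(\tilde\pi)\cong JX$ as ppav. The main obstacle is precisely this last polarization computation together with the local analysis over the branch points of $f$: it is the behaviour of the polygonal construction over those points that both produces the étale-point restriction cutting out $\mathcal M^{tet}_{g,0}$ and controls the intersection numbers needed to see that $u_*$ is an isomorphism and not merely an isogeny, whereas the bijectivity of $T_0$ itself is a matter of routine combinatorial bookkeeping.
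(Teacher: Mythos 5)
The paper does not actually prove this statement: it is recalled as a theorem of Recillas, with the construction delegated to \cite{ReciTrig}, \cite{lange-birkenhake} and \cite[Corollary 2.2]{donagi} (the paper only adds the remark that for $n=3$ and $\tilde\pi$ unramified the polygonal construction yields $X^0\cup X^1$ and one takes one component). Your sketch reconstructs the standard argument from those references, and it is essentially correct: the orientability/monodromy count over the simple branch points of $f$, the fibre structure $2x+y+z$ of the degree-$4$ map over those points, the Riemann--Hurwitz computation giving $g(X)=g$, and the inverse construction via the three partitions of $F^{-1}(p)$ into complementary pairs are all as in Recillas--Donagi. Two points deserve flagging. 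First, the entire content of the ppav statement is concentrated in the polarization computation that you only announce: one must show that the pull-back under $u$ of the polarization $\Xi$ on $P(\tilde\pi)$ induced by $\Theta_{J\tilde D}$ (recall $\Theta_{J\tilde D}|_{P}=2\Xi$, so one has to be careful which class is being pulled back) equals $\Theta_X$; this is an intersection-number computation with the correspondence on $X\times\tilde D$ (carried out in \cite{lange-birkenhake}, Ch.~12), and the injectivity of $u^*$ on $1$-forms that you invoke for surjectivity of $u_*$ is itself a consequence of that computation rather than an independent observation. Second, in the inverse direction you should note that connectedness of $\tilde C$ and $\tilde D$ is not automatic for an arbitrary tetragonal curve (monodromy contained in the Klein subgroup of $S_4$ would disconnect the partition curve); here it is rescued precisely by the defining condition of $\mathcal M^{tet}_{g,0}$, since an \'etale point over every branch point forces all local monodromies to be single transpositions, hence full $S_4$ monodromy. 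Neither point is a gap in the sense of a wrong step, but both are where the actual work lives.
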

Notice that the polygonal construction in case $n=3$, applied to an unbranched covering $\pi$, gives a reducible curve $X=X^0\cup X^1$ (see \cite[Corollary 2.2]{donagi}). The two components are isomorphic tetragonal curves of genus $g$. Take one of them to define the image $(X,F)$ of $(	\tilde C, \eta, M)$ through $T_0$. \\

A similar statement is valid also in the case of double covers of trigonal curves with two ramification points. This has been proved by Lange and Ortega in \cite{lange-ortega}. Let $\mathcal{R}b_g^{tr}$ be the moduli space of pairs $ (\cov, M )$ where $\pi$ is a ramified double cover of a smooth trigonal curve $C$ of genus $g$ and $M$ is a $g^1_3$ on $C$. Suppose that the branch locus of $\pi$ is disjoint from the ramification locus of the degree 3 map $f:C\ra\PP$. 
 As in \cite{lange-ortega}, we will call an element $D\xrightarrow{\pi}C\xrightarrow{f}\PP$ \textit{special} if the branch locus of $\pi$ (given by two points $p_1,p_2$) is contained in a fibre of $f$, otherwise we will call it \textit{general}. Let $\mathcal{R}b_{g,sp}^{tr}$ be the moduli space of special elements. 

Moreover we denote $\mathcal{M}_{g,*}^{tet}$ the moduli space of pairs $(X,k)$ of smooth tetragonal curves with a 4:1 map $k:X\ra\PP$ with at least one \'{e}tale point on each fibre with the exception of exactly one fibre which consists of two simple ramification points. 
\begin{teo}[\cite{lange-ortega}, Theorem 4.3]\label{ramifiedtrigonal}
	The map \begin{equation}
	\mathcal{R}b_{g,sp}^{tr}\ra\mathcal{M}_{g,\ast}^{tet}\label{IsoLangeOrtega}
	\end{equation} is an isomorphism. Moreover if $D\xrightarrow{\pi}C\xrightarrow{f}\PP$ is an element of $\mathcal{R}b_{g,sp}^{tr}$ and $X$ is the corresponding tetragonal curve, then we have an isomorphism of ppav:\begin{equation*}
	P(\pi)\cong JX.
	\end{equation*}
\end{teo}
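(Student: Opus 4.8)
The statement to prove is Theorem \ref{ramifiedtrigonal}, quoted from Lange--Ortega, which asserts that the ramified trigonal construction gives an isomorphism $\mathcal{R}b_{g,sp}^{tr}\to\mathcal{M}_{g,\ast}^{tet}$ together with an isomorphism of ppav $P(\pi)\cong JX$ between the ramified Prym variety and the Jacobian of the associated tetragonal curve.

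The plan is to adapt the classical Recillas trigonal construction (the preceding theorem in the excerpt) to the ramified setting, following the polygonal construction from \S\ref{polygonal} specialized to $n=3$. First I would start from a special element $D\xrightarrow{\pi}C\xrightarrow{f}\PP$ of $\mathcal{R}b_{g,sp}^{tr}$, so $f$ has degree $3$, $\pi$ is a double cover branched over two points $p_1,p_2$ lying in a single fibre $f^{-1}(k_0)$, and this fibre is disjoint from the ramification of $f$. Apply the $n=3$ polygonal construction to produce $D'\to C'\to\PP$ as in diagram \eqref{poligconstr}, where $D'\subset D^{(3)}$ is the fibre product of $\PP\hookrightarrow C^{(3)}$ (sending $p\mapsto f^{-1}(p)$) with $\pi^{(3)}\colon D^{(3)}\to C^{(3)}$. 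Over a generic point the fibre of $D'$ has $2^3=8$ sections, on which the involution $i'$ of \eqref{involutione} acts; I would use Proposition \ref{Reducible} after checking that $D$ is orientable in the relevant sense (the orientation cover is trivial since the two branch points sit over the same point $k_0$, so the monodromy of sign changes is even), giving $D'=X^0\cup X^1$, and one chooses $X:=X^0$ with its degree-$4$ map $k\colon X\to\PP$. The second step is the local analysis of $k$ over each point of $\PP$: away from $k_0$ the fibre $f^{-1}(p)$ consists of three points and $\pi$ is étale over it, so by the case analysis one gets the usual generic fibre of the Recillas construction (in particular at least one étale point on each such fibre); over $k_0$ the fibre $f^{-1}(k_0)$ contains the two branch points $p_1,p_2$ of $\pi$, and unwinding the sections shows $k^{-1}(k_0)$ consists of exactly two simple ramification points — this is precisely the extra local datum distinguishing $\mathcal{M}_{g,\ast}^{tet}$ from $\mathcal{M}_{g,0}^{tet}$. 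A Riemann--Hurwitz count (analogous to the one in the proposition after the bigonal construction) pins down $g(X)=g$.

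Next I would construct the inverse map: given $(X,k)\in\mathcal{M}_{g,\ast}^{tet}$, reconstruct the tower by the standard recipe — the curve $C$ parametrizes the $2$-element subsets of the fibres of $k$ that arise as "odd" versus "even" partitions, or equivalently one uses the étale-point hypothesis to define a trigonal curve $C$ with a $g^1_3$ and a double cover $\pi$ whose branch points are read off from the single special fibre of two simple ramifications. Checking that the two constructions are mutually inverse is essentially a fibrewise combinatorial verification, done point by point over $\PP$ using the local pictures above, plus a check that the special fibre corresponds correctly. Finally, for the Prym identification $P(\pi)\cong JX$ as ppav, I would argue as in Recillas: the polygonal construction yields an isogeny relation among the Jacobians $JD$, $JC$, $JX$, and a dimension count combined with compatibility of the canonical principal polarizations (using that $P(\pi)$ carries a principal polarization in the ramified case, as $\tilde C\to\tilde C/\sigma$ after glueing is an admissible covering of type $(\ast)$ in the sense of Beauville, cf. the Definition above) forces the isogeny $P(\pi)\to JX$ to be an isomorphism of ppav; alternatively one can degenerate from the étale case by the boundary identification $\mathcal{R}_{g-1,2}\subset\overline{\mathcal{R}}_g$ and invoke continuity of the extended Prym map together with properness of $\overline{\mathcal P}_g$.

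The main obstacle I expect is the careful local analysis over the distinguished fibre $f^{-1}(k_0)$: the two branch points of $\pi$ collide in a single fibre of the degree-$3$ map, which is exactly the "special" hypothesis, and one must verify that the polygonal construction does not introduce nodes or worse on $X$ there but instead produces precisely two simple ramification points of $k$ — this requires tracking how the $8$ sections degenerate when two of the three sheets of $\pi$ over $f^{-1}(k_0)$ become ramified, and then showing the chosen component $X^0$ is smooth. A secondary subtlety is verifying that $D$ is orientable so that Proposition \ref{Reducible} applies and $D'$ really splits into two components; and keeping the principal polarizations matched through the isogeny, for which the cleanest route is the Beauville admissible-cover description recalled before Proposition \ref{dominant}. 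Once these local models are nailed down, the global statements (isomorphism of moduli spaces, equality of dimensions, isomorphism of ppav) follow formally.
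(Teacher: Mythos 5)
This theorem is not proved in the paper at all: it is imported verbatim from Lange--Ortega (\cite{lange-ortega}, Theorem 4.3), so there is no internal proof to compare your attempt against. Judged on its own, your sketch follows what is in fact the standard (and the actual Lange--Ortega) route: run the $n=3$ polygonal construction on $D\xrightarrow{\pi}C\xrightarrow{f}\PP$, observe that the \emph{special} hypothesis (both branch points of $\pi$ in one fibre $f^{-1}(k_0)$) is exactly what makes the orientation cover unramified, hence trivial over $\PP$, so that $D'$ splits as in Proposition \ref{Reducible} with $n$ odd, and take $X$ to be one of the two components; the local analysis over $k_0$ then produces the distinguished fibre with two simple ramification points. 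Your identification of $P(\pi)\cong JX$ via the admissible-cover picture is also consistent with how the present paper itself handles the inverse direction in Section 6, where $(\tilde D_{r,s},\tilde C_{r,s})$ is an admissible cover of type $(\ast)$ and Donagi's extension of the trigonal construction (\cite{donagi}, Theorem 2.9) plus Beauville's theory supplies the principal polarization.

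Two points in your sketch deserve more care if this were to be written out. First, over $k_0$ the curve $D'\subset D^{(3)}$ is \emph{not} smooth: set-theoretically there are only two divisors $r_1+r_2+s$ and $r_1+r_2+s'$ above $k_0$ (where $r_i$ are the ramification points of $\pi$ and $s,s'$ lie over the unramified point of the fibre), and through each of these pass one branch of $X^0$ and one branch of $X^1$; so the two components meet there, and what you must check is that each individual component is smooth at these points with a simple ramification of $k$ — this follows from the monodromy $(a_1,a_2,a_3)\mapsto(a_1+1,a_2+1,a_3)$ on the eight sections, which preserves parity and has orbits of length two. Second, in the Prym identification the isogeny-plus-dimension-count version of the argument is not automatic (one must track how the restriction of the canonical polarization of $JD'$ behaves on the component $X$), so the cleaner path you mention — passing to the associated admissible cover and invoking the already-established étale/admissible trigonal construction — is the one to prefer. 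With those caveats, the outline is sound.
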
 

\section{Case $g=1$, $r=2$}

 Let us consider $\pi: D\ra C$ a double ramified covering in $ \mathcal{R}_{1,2}$ and take $$\mathcal{P}_{1,2}: \mathcal{R}_{1,2}\ra\mathcal{A}_1,$$ the corresponding Prym map.
 
  We denote by $b_1+b_2$ the branch divisor (on $C$) and $r_1+r_2$ the ramification divisor (on $D$). The covering $\pi$ is determined by the data $(C,\eta, b_1+b_2)$ where $\eta \in Pic^1(C)=C$ satisfies $ \eta^{\otimes 2} =\mathcal O_C(b_1+b_2)$. The linear series $\vert b_1+b_2 \vert $ gives a map $f:C\xrightarrow{2:1}\mathbb P^1$ ramified in four points $p_1,p_2,p_3,p_4 \in C$. By construction $\eta $ is one of the sheaves $\mathcal O_C(p_i)$.
  
   Calling $\sigma $ the involution on $C$ attached to $f$, then $$\sigma(b_1)=b_2 \quad \text{and}\quad \sigma(p_i)=p_i,\;\; i=1,...,4.$$ Hence $\sigma$ leaves invariant $b_1+b_2$ and $\eta$. The following is well-known:
 \begin{lemma}
  Let $\sigma $ be an involution on a curve $C$ leaving invariant a reduced divisor $B$ and a sheaf $\eta \in Pic(C)$ such that $\eta ^{\otimes 2}\cong \mathcal O_C(B)$. Let $\pi: D\ra C$ be the double covering attached to $(C,\eta, B)$, then there exists an involution $\tilde \sigma $ on $D$ lifting $\sigma$, that is $\sigma \circ \pi= \pi \circ \tilde \sigma $.
  \begin{proof} The curve $D$ is defined as $D:= Spec (\mathbf{\mathcal A})$, where $\mathbf{\mathcal A}$ is the $\mathcal O_C$-algebra $\mathcal O_C \oplus \eta ^{-1}$ and the multiplication is defined in the obvious way using that $\eta ^{-2}\cong \mathcal O_C(-B)\subset \mathcal O_C$ (see \cite[section 1]{mumford}). Then $\sigma $ induces an involution on $\mathbf {\mathcal A}$ and therefore an involution $\tilde \sigma $ on $D$ which lifts $\sigma $ by construction. \\\end{proof} 
   \end{lemma}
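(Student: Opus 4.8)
The plan is to follow the construction of $D$ as the relative spectrum of a sheaf of $\OO_C$-algebras, exactly as in \cite[\S1]{mumford}, and to transport the given involution $\sigma$ through that construction. First I would spell out the algebra structure: set $\mathcal A:=\OO_C\oplus\eta\meno$ with multiplication $(f_0,t_0)\cdot(f_1,t_1)=(f_0f_1+\mu(t_0\otimes t_1),\, f_0t_1+f_1t_0)$, where $\mu:\eta\meno\otimes\eta\meno=\eta^{-2}\hookrightarrow\OO_C(-B)\subset\OO_C$ is the inclusion determined by a choice of section cutting out $B$; then $D:=\underline{\operatorname{Spec}}(\mathcal A)$ and $\pi:D\ra C$ is the structure morphism.

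The key step is to produce an $\OO_C$-semilinear algebra isomorphism $\sigma^\sharp:\mathcal A\ra\sigma^*\mathcal A$ covering $\sigma$. On the $\OO_C$ summand this is just the canonical isomorphism $\OO_C\ra\sigma^*\OO_C$. On the $\eta\meno$ summand we need an isomorphism $\eta\meno\ra\sigma^*\eta\meno$, i.e.\ an isomorphism $\sigma^*\eta\cong\eta$; such an isomorphism exists because by hypothesis $\sigma^*\eta\cong\eta$ in $\operatorname{Pic}(C)$ — this is precisely what ``$\sigma$ leaves $\eta$ invariant'' means. Fix one such isomorphism $\psi:\sigma^*\eta\ra\eta$, inducing $\psi\meno{}^{t}$ or rather its inverse transpose on the dual, call it $\phi:\eta\meno\ra\sigma^*\eta\meno$. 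One then checks compatibility with the multiplication: the two maps $\eta^{-2}\ra\OO_C$ obtained by (a) applying $\phi\otimes\phi$ and then the pullback of $\mu$, versus (b) applying $\mu$ and then the canonical map $\OO_C\ra\sigma^*\OO_C$, differ by the automorphism of $\OO_C(-B)$ induced on the square; since $\sigma$ fixes $B$ (as a divisor), both cut out the same subsheaf $\OO_C(-B)$, so after possibly rescaling $\psi$ by a suitable square root of the comparison scalar (a nonzero constant, as $C$ is projective and connected) the diagram commutes. This yields the algebra isomorphism $\sigma^\sharp$, hence a morphism $\tilde\sigma:D\ra D$ with $\pi\circ\tilde\sigma=\sigma\circ\pi$.

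It remains to arrange that $\tilde\sigma$ is an involution. A priori $\tilde\sigma^2$ is a deck transformation of $\pi$ lifting $\id_C$, hence is either $\id_D$ or the sheet-interchange involution $i$ of $\pi$ (since $\operatorname{Aut}(D/C)=\Zmod$). On the algebra side $(\sigma^\sharp)^2$ acts as $+1$ on $\OO_C$ and as multiplication by some nonzero constant $c$ on $\eta\meno$, with $c^2=1$ by the $\OO_C$-compatibility; so $c=\pm1$. If $c=1$ we are done; if $c=-1$, replace $\psi$ by $\sqrt{-1}\,\psi$ (again using that we may rescale by any nonzero scalar), which changes $c$ by $(\sqrt{-1})^{-2}\cdot(\text{pullback factor})$ — more carefully, one tracks how $c$ transforms under $\psi\mapsto\lambda\psi$ and chooses $\lambda$ to normalize $c$ to $1$. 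I expect this normalization bookkeeping — keeping straight the semilinear pullbacks and verifying that the scalars are genuinely constants rather than units of $\OO_C^\times$ that cannot be adjusted — to be the only real subtlety; everything else is the standard Mumford construction run functorially. Finally $\pi\circ\tilde\sigma=\sigma\circ\pi$ holds by construction, completing the proof.
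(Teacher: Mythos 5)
Your strategy is the same as the paper's (run $\sigma$ through Mumford's construction $D=\underline{\operatorname{Spec}}(\mathcal O_C\oplus\eta^{-1})$), and everything up to and including the construction of the algebra isomorphism $\sigma^\sharp$ is fine. The gap is in your last paragraph, where you normalize $\tilde\sigma$ to be an involution. Let $c\in\mathbb C^*$ be the constant by which $\sigma^*\phi\circ\phi$ acts on $\eta^{-1}$. Rescaling $\phi\mapsto\lambda\phi$ multiplies $c$ by $\lambda^2$, but it also multiplies the left-hand side of the compatibility identity $\sigma^*\mu\circ\phi^{\otimes 2}=\mu$ by $\lambda^2$; so once you insist that $\sigma^\sharp$ be an algebra homomorphism (which you must, or $\tilde\sigma$ is not a morphism of schemes at all), you are only free to take $\lambda=\pm1$, and $c$ does not move. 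Hence $c=\pm1$ is an honest invariant of $(C,\sigma,\eta,B)$ and the rescaling by $\sqrt{-1}$ you propose is not available. Moreover $c=-1$ genuinely occurs: take $C=\mathbb C/(\mathbb Z+\mathbb Z\tau)$, $\sigma(z)=z+\tfrac12$, $B=\emptyset$ (reduced and invariant), and $\eta$ the $2$-torsion bundle whose associated \'etale double cover is $D=\mathbb C/(2\mathbb Z+\mathbb Z\tau)$; the only two lifts of $\sigma$ are $z\mapsto z+\tfrac12$ and $z\mapsto z+\tfrac32$, both of order $4$. So the statement needs an extra hypothesis, and no amount of bookkeeping will close the gap as you have set it up.

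What saves the lemma in the paper's application is that $\sigma$ has a fixed point $p$ outside $\operatorname{supp}B$ (the points $p_2,p_3,p_4$ of Section 2). At such a $p$ the fibre of $\mu:\eta^{-2}\to\mathcal O_C(-B)\subset\mathcal O_C$ is nonzero, and evaluating $\sigma^*\mu\circ\phi^{\otimes2}=\mu$ on fibres at $p$ gives $(\phi|_p)^2=1$; since $c=(\sigma^*\phi\circ\phi)|_p=(\phi|_p)^2$, this forces $c=1$ and hence $\tilde\sigma^2=\operatorname{id}_D$. You should replace your normalization step by this (or an equivalent) argument and record the extra hypothesis it uses. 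For comparison, the paper's own proof is a one-line assertion that $\sigma$ ``induces an involution on $\mathcal A$'' and elides exactly this point; your write-up has the merit of exposing the real subtlety, but then resolves it incorrectly.
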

 We have the following Cartesian diagram:
 \begin{equation}
 \begin{tikzcd}
 & D\arrow{dl}\arrow{dr}\arrow[d,"\pi"]\\
 \PP\arrow{dr} &C \arrow[d,"f"]&E\arrow[dl,"f'"]\\
 &\PP
 \end{tikzcd}
 \end{equation}
Here $E$ is the quotient of $D$ by $\tilde \sigma$ while $\PP$ is the quotient of $D$ by $\tau\tilde{\sigma}$, where $\tau$ is the involution attached to $\pi$. Indeed, without loss of generality, assume that $\pi$ corresponds to the point $p_1$ (i.e. $\eta \cong \mathcal O_C(p_1)$). Then  the preimages of $p_1 $ by $\pi$ are  fixed points of $\tilde \sigma $ and they are the only ones. On the other hand, $\tau\tilde{\sigma}$ fixes the preimages by $\pi$ of $p_2,p_3,p_4$. Therefore $E$ is an elliptic curve, while $D/\langle\tau\tilde{\sigma}\rangle$ is $\PP$ (note that if this is not true the contrary holds). Using \cite[section 7]{mumford}, we get $P(D,C)\cong E$. \\

Calling $a_i=f(p_i)$ and $b=f(b_1)=f(b_2)$, we obtain that the branch locus of $f': E\ra \PP$ is given by $b, a_2, a_3, a_4$. We have the following: 
\begin{teo}\label{teo1}
 Fix a generic elliptic curve $E\in\mathcal{A}_1$. The preimage of $E$ by the ramified Prym map $\mathcal{P}_{1,2}$ is isomorphic to $L_1 \sqcup \ldots \sqcup L_4$, where each $L_i$ is the complement of three points in a projective line.
 
 \begin{proof}
 	Start with $E$ represented as a double covering of $\mathbb P^1$ branched in four points $c_1, c_2, c_3, c_4$ and put $$L_i=\mathbb P^1\backslash \{ c_{1},...,\hat{c}_{i},...,c_{4}\}.$$ Then for any $q\in L_1$ we get a unique element in $\mathcal{P}_{1,2}\meno(E)$ in the following way: $C$ is the covering of $\mathbb P^1$ branched in $q, c_2, c_3, c_4$. Denote with $b_1,b_2$ the preimages of $c_1$ via this covering. Then $D\ra C$ is determined by $b_1+b_2$ and by $\eta=\OO_C(p)$, where $p$ is the ramification point in $C$ attached to $q$. \\
 	Doing the same for the other $L_i's$, we conclude.\\
 \end{proof}
\end{teo}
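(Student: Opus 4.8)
The plan is to exploit the concrete description, already assembled in the paragraphs preceding the statement, of an arbitrary covering in $\mathcal{R}_{1,2}$ lying over a fixed elliptic curve $E$. Recall the setup: an element of $\mathcal{P}_{1,2}^{-1}(E)$ is a covering $\pi: D\ra C$ with $C$ an elliptic curve, $\eta\in Pic^1(C)$, and branch divisor $b_1+b_2\in|\eta^{\otimes 2}|$. The pencil $|b_1+b_2|$ exhibits $C$ as a double cover $f:C\xrightarrow{2:1}\PP$ branched at four points $p_1,\ldots,p_4$, and $\eta=\OO_C(p_i)$ for one of the $i$; the preceding analysis shows $P(D,C)\cong E$, where $E$ is the double cover of $\PP$ branched at $b:=f(b_1)=f(b_2)$ together with the three points $a_j=f(p_j)$, $j\neq i$. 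So the job is to invert this: starting from $E$, recover all the data $(C,\eta,b_1+b_2)$ producing it, and show the resulting parameter space is $L_1\sqcup\cdots\sqcup L_4$.

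First I would fix a realization of $E$ as a double cover of $\PP$ branched at four points $c_1,c_2,c_3,c_4$; since $E$ is generic these four points are a generic unordered $4$-tuple on $\PP$ (up to the $\mathrm{PGL}_2$-action, the cross-ratio), and the realization is essentially unique because the hyperelliptic (here, degree-two) structure on an elliptic curve is unique up to automorphism. Then I would run the construction in reverse exactly as in the proof sketch: for each choice of index $i\in\{1,2,3,4\}$ and each point $q\in\PP\setminus\{c_j : j\neq i\}$, let $C_q$ be the double cover of $\PP$ branched at $q$ and the three points $c_j$, $j\neq i$; this is an elliptic curve since it has four branch points (we must exclude $q$ coinciding with one of the $c_j$, $j\neq i$, which is why we remove exactly three points, giving $L_i:=\PP\setminus\{c_j:j\neq i\}$). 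Let $b_1,b_2$ be the two preimages in $C_q$ of $c_i$, let $p$ be the ramification point of $C_q\ra\PP$ over $q$, and set $\eta=\OO_{C_q}(p)$. One checks $\eta^{\otimes 2}\cong\OO_{C_q}(b_1+b_2)$ because $b_1+b_2$ and $2p$ are both pullbacks of degree-one divisors on $\PP$ under the degree-two map, hence linearly equivalent. This produces an element of $\mathcal{R}_{1,2}$, and by the computation recalled above its Prym variety is the double cover of $\PP$ branched at $c_i$ (the image of $b_1,b_2$) and the remaining $c_j$, which is $E$. So we have a well-defined map $L_1\sqcup\cdots\sqcup L_4\ra\mathcal{P}_{1,2}^{-1}(E)$.

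Next I would argue this map is bijective. Surjectivity: given any $(C,\eta,b_1+b_2)$ with Prym $E$, the pencil $|b_1+b_2|$ gives $f:C\ra\PP$ branched at $p_1,\ldots,p_4$ with $\eta=\OO_C(p_i)$ for some $i$; then $E$ is the double cover branched at $b$ and $\{a_j=f(p_j):j\neq i\}$. Since the branch set of $E$ over $\PP$ is $\{c_1,\ldots,c_4\}$ and this is unique up to $\mathrm{PGL}_2$, after normalizing we may identify $\{b\}\cup\{a_j:j\neq i\}$ with $\{c_1,\ldots,c_4\}$; the point $c_i$ must be $b$ (the image of the branch points of $\pi$), and then $q:=a_i$ together with the index $i$ recovers the element of $L_i$ mapping to it. Injectivity: the index $i$ is recovered as the one for which $\eta\cong\OO_C(p_i)$, and $q=f(p_i)$ is then determined. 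The one genuine subtlety — and the step I expect to be the main obstacle — is bookkeeping the automorphisms: the double cover $E\ra\PP$ has nontrivial deck group, and $\PP$ itself has automorphisms permuting configurations, so one must be careful that "$E$ fixed" really does pin down $\{c_1,\ldots,c_4\}\subset\PP$ rigidly enough that distinct pairs $(i,q)$ give non-isomorphic triples $(C,\eta,b_1+b_2)$, and conversely that no automorphism identifies points within a single $L_i$. Genericity of $E$ (so that the four branch points have trivial stabilizer in $\mathrm{PGL}_2$ beyond the obvious Klein four-group acting on $E$, which is accounted for in the moduli identification) is what makes this clean; I would phrase the whole argument at the level of the isomorphism classes parametrized by $\mathcal{R}_{1,2}$ and $L_i$, noting that for generic $E$ the complement-of-three-points curves $L_i$ carry no identifications. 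Finally, since each $L_i$ is manifestly the complement of three points in $\PP$, the fibre is $L_1\sqcup\cdots\sqcup L_4$ as claimed.
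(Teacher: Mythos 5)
Your proposal is correct and follows essentially the same route as the paper: realize $E$ as a double cover of $\PP$ branched at $c_1,\dots,c_4$, and for each index $i$ and each $q$ in the complement of the other three branch points build $C$ as the double cover branched at $q$ and those three points, with $\eta=\OO_C(p)$ for $p$ the ramification point over $q$ and $b_1+b_2$ the preimage of $c_i$. The only difference is that you spell out the verification $\eta^{\otimes2}\cong\OO_C(b_1+b_2)$, the bijectivity, and the automorphism bookkeeping, all of which the paper leaves implicit.
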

\section{Case $g=1$, $r=4$}
The case \begin{equation}\label{Prym1,4}
\mathcal{P}_{1,4}: \mathcal{R}_{1,4}\to \mathcal{A}_2^{(1,2)}
\end{equation} is completely studied in \cite{Barth}. Here we include the main result without proof by the sake of completeness.\\

Actually, instead of \eqref{Prym1,4}, it is easier to study the composition:
\begin{equation}
	\mathcal{R}_{1,4}\to \mathcal{A}_2^{(1,2)}\xrightarrow{\cong}\mathcal{A}_2^{(1,2)}
\end{equation}
where the isomorphism sends the Prym variety to its dual (see subsection \ref{dualpol}). 

Fix a general polarized abelian surface $(A,L)$ of type (1,2). We have the following:
\begin{prop}[\cite{Barth}, pp. 46-48]
	The pencil $\vert L\vert $ has no fixed component and its base locus consists of four points $e_1,...,e_4$.  The general member $D\in \vert L\vert $ is an irreducible smooth curve of genus 3. Moreover, $L$ is symmetric and the same occurs for all $D\in \vert L\vert$.
\end{prop}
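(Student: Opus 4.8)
The plan is to use that a polarization of type $(1,2)$ has exactly $h^0(A,L)=1\cdot 2=2$ sections (higher cohomology vanishing by ampleness), so that $|L|\cong\PP$ is a pencil, and that by Riemann--Roch on the abelian surface $L^2=2\chi(L)=4$. The genus assertion is then immediate: for a smooth $D\in|L|$, adjunction on $A$ gives $\om_D\cong L|_D$, hence $2g(D)-2=L\cdot D=L^2=4$ and $g(D)=3$. So the real content is the absence of a fixed component, the structure of the base locus, smoothness of the general member, and the symmetry. I would treat the first three by intersection theory together with the genericity of $(A,L)$, and import the last one from Barth's explicit theta-function computation.

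First, no fixed component. For a generic $(A,L)$ one has $\mathrm{NS}(A)=\Zeta\cdot c_1(L)$. If $F\neq 0$ were the fixed part of $|L|$, then $F\equiv nL$ with $n\ge 1$ (because $L$ is ample, so $L\cdot F>0$), while the moving part $M=L-F$ is effective and nonzero, with $h^0(M)=h^0(L)=2$ and $M\equiv(1-n)L$, forcing $1-n\ge 1$, a contradiction. Hence two general members $D_1,D_2\in|L|$ share no component, so $\mathrm{Bs}\,|L|=D_1\cap D_2$ is a zero-dimensional scheme of length $D_1\cdot D_2=L^2=4$. It is non-empty, since otherwise $|L|$ would be base-point free and define a morphism $A\to\PP$ whose general fibre $D$ would satisfy $D^2=0\neq 4$. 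Finally, the translations by $K(L)\cong(\Zeta/2\Zeta)^2$ preserve $|L|$, hence preserve the length-$4$ base scheme; since this group acts freely on $A$, that scheme is a disjoint union of free orbits all of the same local multiplicity, so it is a single free orbit of four distinct reduced points $e_1,\dots,e_4$. (Equivalently, one can follow Barth's explicit description of the four base points.)

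Next, the general member. Reducedness and irreducibility of every $D\in|L|$ follow again from $\mathrm{NS}(A)=\Zeta\cdot c_1(L)$: a decomposition $D=D_1+D_2$ into nonzero effective divisors would give $D_i\equiv a_iL$ with $a_i\ge 1$ and $a_1+a_2=1$. For smoothness, Bertini gives that the general $D$ is smooth away from $\{e_1,\dots,e_4\}$, and at the base points I would blow up: let $\beta\colon\widetilde A\to A$ be the blow-up of the four reduced points, with exceptional curves $E_i$. Then $|\beta^*L-E_1-\dots-E_4|$ has the same two sections and self-intersection $L^2-4=0$; two general members are effective, share no component and meet in $0$ points, hence are disjoint, so this linear system is base-point free and defines a morphism $\widetilde A\to\PP$. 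Its general fibre $\widetilde D$ is smooth by generic smoothness, meets each $E_i$ transversally in one point, has arithmetic genus $3$, and maps isomorphically onto $D=\beta(\widetilde D)\in|L|$ (which has arithmetic genus $3$ too, by adjunction on $A$); hence $D$ is smooth at the $e_i$ as well, and is an irreducible smooth curve of genus $3$.

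Finally, symmetry. After replacing $L$ by a translate one may assume $L$ is symmetric of characteristic zero; this is the standard normalization. It then remains to show that every member of $|L|$ is $(-1)$-invariant, equivalently that $(-1)^*$ acts on $H^0(A,L)\cong\C^2$ by a scalar; this is exactly the statement that the odd part $H^0(A,L)^-$ vanishes, which holds precisely because the type is $(1,2)$ and the characteristic is zero. I expect this eigenspace computation, carried out with theta functions, to be the only genuinely computational point, and it is Barth's argument in \cite[pp.~46--48]{Barth}; the remaining assertions, as above, are formal consequences of intersection theory on $A$ and the genericity of $(A,L)$.
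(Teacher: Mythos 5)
Your argument is correct, but it is worth saying up front that the paper does not prove this proposition at all: it states it as a quotation from Barth (pp.~46--48) and explicitly says the result is included ``without proof''. So there is no in-paper argument to match; what you have written is an independent, essentially self-contained derivation. Your route is the natural intersection-theoretic one: $h^0(L)=2$ and $L^2=4$ from the type, Picard number one for generic $(A,L)$ to kill fixed components and reducible members, the identification $\mathrm{Bs}|L|=D_1\cap D_2$ of length $4$, and --- the nicest step --- the observation that the base scheme is invariant under the free action of $K(L)\cong(\mathbb{Z}/2\mathbb{Z})^2$, which forces it to be a single free orbit of four reduced points. Note that this reducedness already gives local intersection number $1$ at each $e_i$, hence smoothness and transversality of two general members there, so the blow-up paragraph, while correct, is not really needed; and adjunction then gives $g=3$. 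The one point you do not prove is the symmetry statement, which you correctly isolate as the only genuinely computational input and defer to Barth, exactly as the paper does for the whole proposition. (Two small remarks there: ``$(-1)^*$ acts by a scalar'' is equivalent to \emph{one} of the two eigenspaces vanishing, not specifically the odd one --- either way every member of $|L|$ is symmetric; and the computation can be made self-contained without theta functions: in the Schr\"odinger model $H^0(A,L)\cong\mathbb{C}^{\mathbb{Z}/2\mathbb{Z}}$ for the characteristic-zero symmetric representative, the normalized action of $(-1)$ sends $\delta_x$ to $\delta_{-x}=\delta_x$, so it is the identity on $H^0(A,L)$.) In short: the proposal is a sound proof of everything except the symmetry claim, for which it relies on the same external source as the paper.
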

Furthermore if $D \in |L|$ is smooth, the multiplication by $-1$ has exactly 4 fixed points on it. This means that the quotient $D/\langle -1 \rangle$ is an elliptic curve. 
\begin{teo}[Duality Theorem 1.12,\cite{Barth}]\label{teo2}
	The fibre of the Prym map is parametrized by the linear system $|L^*|$, where $L^*$ is the dual polarization of $L$ defined as in \ref{dualpol}.
\end{teo}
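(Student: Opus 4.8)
The plan is to construct mutually inverse morphisms between the linear system $|L^*|$ on the dual surface $A^*$ and the fibre $\mathcal P_{1,4}^{-1}(A)$; passing to the dual is exactly what makes the abelian variety attached to the relevant curves come out equal to $A$ rather than $A^*$, which is why the dual polarization appears in the statement. First I would produce, for each smooth member $D'\in|L^*|$, an element of $\mathcal R_{1,4}$. Since $(A^*,L^*)$ is again a general abelian surface with a polarization of type $(1,2)$, the Proposition above applies to it: the pencil $|L^*|$ has four base points, all of order two, its generic member $D'$ is a smooth irreducible curve of genus $3$, and $L^*$ — hence every member of $|L^*|$ — is symmetric. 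Thus $-1$ acts on $D'$ with exactly four fixed points, the quotient $C':=D'/\langle -1\rangle$ is an elliptic curve, and $\pi'\colon D'\to C'$ is a double cover ramified in $4$ points, giving a point of $\mathcal R_{1,4}$.

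The crucial step is to identify $P(\pi')$ with $A$. The inclusion $\iota\colon D'\hookrightarrow A^*$ is equivariant for the Prym involution $\sigma=-1$ on $D'$ and for $-1$ on $A^*$, so the induced homomorphism $\iota^*\colon A\cong\operatorname{Pic}^0(A^*)\to\operatorname{Pic}^0(D')=JD'$ satisfies $\sigma^*\circ\iota^*=-\iota^*$, because $(-1)^*$ acts as inversion on $\operatorname{Pic}^0(A^*)$. Hence the image of $\iota^*$ lies in the connected component of the origin of the $(-1)$-eigenspace of $\sigma^*$ on $JD'$, which is precisely $P(\pi')$. For generic $D'$ the curve is non-degenerate in $A^*$, so $\iota^*$ is injective, and a dimension count then forces $\iota^*\colon A\xrightarrow{\sim}P(\pi')$ to be an isomorphism of abelian varieties. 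The natural Prym polarization $\Theta_{JD'}\vert_{P(\pi')}$ has type $(1,2)$, so its pullback along $\iota^*$ is a polarization of type $(1,2)$ on $A$; as $\operatorname{NS}(A)=\mathbb Z\cdot c_1(L)$ for a general $A$, this polarization agrees with $L$ up to translation. Therefore $\pi'\in\mathcal P_{1,4}^{-1}(A)$, and $D'\mapsto\pi'$ defines a morphism $\beta$ from the smooth locus of $|L^*|$ to $\mathcal P_{1,4}^{-1}(A)$.

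To invert $\beta$, I would start from $\pi\colon D\to C$ together with its canonical isomorphism $\Phi\colon A\xrightarrow{\sim}P(\pi)$. Dualising the inclusion $k\colon P(\pi)\hookrightarrow JD$ gives a surjection $\widehat k\colon JD=\widehat{JD}\twoheadrightarrow\widehat{P(\pi)}$, and composing it with $\widehat\Phi\colon\widehat{P(\pi)}\xrightarrow{\sim}\widehat A=A^*$ and with an Abel--Jacobi map $u\colon D\to JD$ (taking a ramification point to $0$) produces a morphism $\psi'\colon D\to A^*$. For generic $\pi$ the map $\psi'$ is an embedding; since $\psi'(D)$ then has genus $3$ and generates $A^*$, its class is forced to be $c_1(L^*)$, so $\psi'(D)\in|L^*|$ up to translation. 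When $\pi=\pi'$ arises from some $D'\in|L^*|$, chasing these maps through the identities $\operatorname{Alb}(\iota)=\widehat\Phi\circ\widehat k$ (transposes of $\iota^*=k\circ\Phi$) and $\operatorname{Alb}(\iota)\circ u=\iota$ (up to translation) gives $\psi'(D')=D'$ up to translation; in particular $\beta$ is injective with inverse $\pi\mapsto\psi'(D)$ on a dense open. Since $|L^*|\cong\PP$ and, by the corollary to Proposition~\ref{dominant}, $\mathcal P_{1,4}^{-1}(A)$ has pure dimension $1$, one concludes that $\beta$ identifies $|L^*|$ with $\mathcal P_{1,4}^{-1}(A)$.

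I expect the genuine difficulty to lie not in the abelian-variety computations sketched above but in two refinements needed for the precise statement. First, one must remove the translation ambiguity so that the correspondence $D'\leftrightarrow\pi'$ is honestly canonical; this is where a careful choice of base point — or Barth's explicit theta-function analysis in \cite{Barth} — enters. Second, one must match the finitely many singular members of the pencil $|L^*|$ with the corresponding degenerate (admissible, in the sense of \cite{beau}) covers, so that the isomorphism extends over all of $|L^*|$ and not merely over its smooth locus.
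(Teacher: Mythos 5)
The paper deliberately gives no proof of this statement: it is quoted from Barth ``without proof by the sake of completeness,'' so there is no in-paper argument to measure yours against; the surrounding text of the section only records the forward construction (smooth $D$ in a $(1,2)$-pencil, quotient by $-1$ is elliptic). Judged on its own terms, your strategy is sound and takes a genuinely different route from Barth's original proof, which identifies the Prym by an explicit theta-function/theta-group analysis of $(1,2)$-polarizations; you argue functorially instead. Your central step --- that $\iota^*\colon A=\operatorname{Pic}^0(A^*)\to JD'$ anti-commutes with $\sigma^*$, hence lands in $P(\pi')$, is an isomorphism onto it for dimension reasons, and carries the type $(1,2)$ Prym polarization back to $L$ because $\operatorname{NS}(A)$ has rank one --- is correct and is the efficient modern way to see the duality. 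One step is under-justified: ``non-degenerate, so $\iota^*$ is injective'' does not suffice, since non-degeneracy of $D'$ in $A^*$ only gives that $\iota^*$ has finite kernel. You need the Albanese map $JD'\to A^*$ to be surjective with connected kernel, which holds because $D'$ is ample, so $\pi_1(D')\to\pi_1(A^*)$ is onto; an isogeny in place of an isomorphism would not let you conclude $(P(\pi'),\Xi)\cong (A,L)$.

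The genuine remaining work is exactly what you flag in your closing paragraph, and it should not be undersold. As written, you obtain a map $\beta$ from the smooth locus of $|L^*|$ to the fibre, and a candidate inverse that produces a curve in $A^*$ only up to translation. Since the curves with class $c_1(L^*)$ form a $3$-dimensional family of which $|L^*|$ is a $1$-dimensional slice, removing the translation ambiguity (via the symmetry of $L^*$ and a choice among the $16$ symmetric representatives of the class, or Barth's theta analysis) is where the word ``parametrized/canonically'' is actually earned; it is also what is needed to make $\beta$ injective and honestly surjective onto the fibre, rather than merely dominant onto one component. Together with matching the singular members of the pencil to admissible covers, this is the substance of Barth's Theorem 1.12, not a routine refinement of your sketch.
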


\section{Cases $g=2$ and $r=2$}
This section is devoted to the analysis of the fibres of
\[  
\mathcal{P}_{2,2}: \mathcal{R}_{2,2}\to \mathcal{A}_2.
\]
Taking an element $(C,\eta, B)\in \mathcal{R}_{2,2} $, we can apply the bigonal construction using the hyperelliptic involution of $C$. Thus we pass from towers:
\begin{equation*}
\label{startbigonal}
D\ra C\ra\PP
\end{equation*}
to towers
\begin{equation*}\label{resbigonal}
D'\xrightarrow{\pi'} C'\xrightarrow{f'}\PP,
\end{equation*}
where $\pi'$ is a degree 2 map branched on $6$ points and $C'$ is a curve of genus $0$ maybe with one node. In order to understand the nodal case the map $f'$ can be seen as choosing two different points in the projective line, the limit case appears when the two points come together.

Denote by $\mathcal M_{0,6}$ the moduli space of 6 unordered different points in the projective line and by $\mathcal M_{0,6,2}$ the moduli space of two collections of points in the line: $6$ unordered different points and $2$ unordered different points more. A partial compactification of this last space $\overline {\mathcal M}_{0,6,2}$ consists in allowing the set of two points to be a repeated one. 

As described in subsection \ref{bigonal}, the bigonal map yields an injective map:
\[
b: \mathcal R_{2,2} \ra \overline {\mathcal{M}}_{0,6,2}.
\]
This is an isomorphism of $\mathcal R_{2,2}$ with $b(\mathcal R_{2,2})$, where the inverse map is the bigonal construction again (according to \cite[Section 2.3]{donagi}, more precisely possibility (vi) in page 69, the bigonal map extends to nodal admissible coverings). 
We denote the image $b(\mathcal R_{2,2})$ by $\overline {\mathcal M}_{0,6,2}^{0}$ .

 In order to give a precise description of this moduli space we identify the symmetric product $Sym^2 \mathbb P^1$ with a projective plane in the standard way: we see $\PP\hookrightarrow\mathbb{P}^2$ as a conic via the Veronese embedding of degree $2$. The pairs of points (possibly equal) correspond to lines and therefore $Sym^2 \mathbb P^1$ can be identified with  
$\mathbb P^{2 \vee}$. The way that each  pair of different points $x_1,x_2$ determines a $2:1$ map on the conic is easy: two points  $z_1,z_2$ correspond by the involution $\sigma_{x_1+x_2}$ with fixed points $x_1,x_2$, if they form  a harmonic ratio: $\vert x_1,x_2;z_1,z_2 \vert =-1$. Geometrically, viewing the points in the plane, this means that the pole of the line $x_1x_2$ is aligned with $z_1$ and $z_2$.\\

We have $6$ marked points $p_1,...,p_6$ in the line and we have to avoid property (5) of subsection \ref{bigonal}: we have to eliminate the pairs $x_1 + x_2 \in Sym^2\mathbb P^1$ such that $\sigma _{x_1+x_2}(p_i)=p_j$ for some $i\neq j$.
Hence:
\[
\overline {\mathcal M}_{0,6,2}^{0}=\{[(p_1+\ldots +p_6,x_1+x_2)]\in \overline {\mathcal M}_{0,6,2} \mid \vert x_1,x_2;p_i,p_j\vert \neq -1, \forall i\neq j \}.
\]

Then we have a commutative diagram:
\begin{equation}
\begin{tikzcd}\label{Fibra2,2}
\mathcal{R}_{2,2}\arrow{r}\arrow{d}{b}	& \mathcal{A}_2\\
\overline {\mathcal{M}}_{0,6,2}^0 \arrow{r}{\phi} &\mathcal{M}_{0,6}\arrow{u}
\end{tikzcd}
\end{equation}
where $\phi$ is the forgetful map and $\mathcal{M}_{0,6} \ra \mathcal{A}_2$ is just the Torelli morphism. 
Therefore, studying the fibre of $\phi$, we conclude with the following:
\begin{teo}\label{teo3}
	The fibre of the Prym map $\mathcal{P}_{2,2}$ over a general principally polarized abelian surface $S$ is isomorphic to a projective plane minus $15$ lines.
	\begin{proof}
		Let $S$ be a general principally polarized abelian surface, assume that $S$ is the Jacobian of a genus $2$ curve $H$ and represent $H$ as an element in $\mathcal{M}_{0,6}$, where the six marked points $p_1,...,p_6$ are all different and correspond to the branch locus of the hyperelliptic involution. Diagram \eqref{Fibra2,2} says that we must look at the fibre of $\phi$ over $H$. The harmonic condition $\vert x_1,x_2;p_i,p_j\vert = -1$ says that $x_1$, $x_2 $ and the pole $p_{i j}$ of $p_ip_j$ are in a line. Therefore, looking at the dual, we have to rule out the points of the $15$ lines $(p_{i j})^*\subset \mathbb P^{2 \vee}$. 
		
		Notice that the limit case $x_1=x_2$ means that $p_{i j}$ belongs to the tangent line at the point and this is not excluded in the fibre. \\
	\end{proof}
\end{teo}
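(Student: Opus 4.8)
The plan is to read the fibre off the commutative square \eqref{Fibra2,2}. A general $S$ is a Jacobian, $S\cong JH$ for a smooth genus-two curve $H$, which we present as a double cover of $\PP$ branched at six distinct unordered points $p_1,\dots,p_6$, that is, as a point of $\mathcal M_{0,6}$; the Torelli morphism $\mathcal M_{0,6}\to\mathcal A_2$ is injective and a general $S$ lies in its image, so $[p_1+\dots+p_6]$ is the unique preimage of $S$. Since the bigonal map $b$ is an isomorphism of $\mathcal R_{2,2}$ onto $\overline{\mathcal M}_{0,6,2}^{0}$ and the square commutes (here Pantazis' theorem enters, forcing the right-hand arrow to be the Torelli morphism), we identify $\mathcal P_{2,2}^{-1}(S)$, via $b$, with the fibre $\phi^{-1}([p_1+\dots+p_6])$ of the forgetful map $\phi: \overline{\mathcal M}_{0,6,2}^{0}\to\mathcal M_{0,6}$. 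The problem is thereby reduced to describing this fibre.

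By the explicit description of $\overline{\mathcal M}_{0,6,2}^{0}$, $\phi^{-1}([p_1+\dots+p_6])$ is the set of pairs $x_1+x_2\in Sym^2\PP$, with $x_1=x_2$ allowed, such that $\vert x_1,x_2;p_i,p_j\vert\neq -1$ for all $i\neq j$. I would now pass to the plane model of subsection \ref{bigonal}: embed $\PP$ as a smooth conic $Q\subset\mathbb P^2$ by the degree-two Veronese, and identify $Sym^2\PP$ with $\mathbb P^{2\vee}$ by sending $x_1+x_2$ to the line it spans (a tangent line to $Q$ when $x_1=x_2$). The harmonic relation $\vert x_1,x_2;p_i,p_j\vert=-1$ is equivalent to the collinearity of $x_1$, $x_2$ and the pole $p_{ij}$ of the chord $\overline{p_ip_j}$, which dually says exactly that the point $x_1+x_2\in\mathbb P^{2\vee}$ lies on the line $(p_{ij})^*$ dual to $p_{ij}$. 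Hence $\phi^{-1}([p_1+\dots+p_6])=\mathbb P^{2\vee}\setminus\bigcup_{1\le i<j\le 6}(p_{ij})^*$. These are $\binom{6}{2}=15$ distinct lines: the pole map is a bijection, so $(p_{ij})^*=(p_{kl})^*$ would force $\overline{p_ip_j}=\overline{p_kp_l}$ as lines, which is impossible for distinct pairs, a line meeting $Q$ in at most two points. I would also record that the diagonal conic $Q^*\subset\mathbb P^{2\vee}$ of degenerate pairs $x_1=x_2$ is not removed: for those pairs the condition only forbids $p_{ij}$ from lying on the tangent line to $Q$ at $x_1$, which is non-generic. Combining, $\mathcal P_{2,2}^{-1}(S)\cong\mathbb P^{2\vee}$ minus $15$ lines.

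The only step that is not purely formal is the reduction in the first paragraph. It requires that $b$ be an isomorphism onto $\overline{\mathcal M}_{0,6,2}^{0}$ also over the boundary, where the bigonal partner $C'$ acquires a node and $D\to C\to\PP$ meets possibility (5) of subsection \ref{bigonal}, and that the extended bigonal construction on nodal admissible coverings still realizes Pantazis' duality, so that the right-hand arrow of \eqref{Fibra2,2} is genuinely the Torelli morphism on the whole partial compactification. This was set up in subsection \ref{bigonal}, so it costs nothing here, but it is the point requiring care; everything after the reduction is a short exercise in classical projective duality.
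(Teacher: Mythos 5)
Your argument is correct and follows the same route as the paper: reduce via the commutative square \eqref{Fibra2,2} and Pantazis' duality to the fibre of the forgetful map $\phi$ over $[p_1+\dots+p_6]$, then translate the harmonic condition into the collinearity of $x_1$, $x_2$ and the pole $p_{ij}$, which dually removes the $15$ lines $(p_{ij})^*$ from $\mathbb P^{2\vee}$ while keeping the diagonal conic. The extra checks you supply (distinctness of the $15$ lines, the boundary behaviour of $b$) are sensible but already covered by the setup in the preliminaries.
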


\section{Case $g=2$ and $r=4$} 
We study now the map
$$\mathcal{P}_{2,4}: \mathcal{R}_{2,4}\to \mathcal{A}_3. $$
In this case the bigonal construction produces an injective map: 
\[
b: \mathcal{R}_{2,4} \ra  \tilde{\mathcal{R}}_{1,6},
\] 
where $\tilde{\mathcal{R}}_{1,6} $ is the moduli space of isomorphism classes of pairs $(\pi',f')$. Here $\pi':D' \ra C'$ is a degree 2 map branched on 6 smooth points  $p_1,...,p_6$. The curve $D'$ has at most one node, fixed by the involution on $D'$ attached to $\pi'$, which determines an admissible singularity for $\pi'$ of type $(\ast)$ by \cite[Section 2.3, possibility (vi)]{donagi}.  The map $f'$ is a $g^1_2$ on $C'$. 

We denote by $\tilde{\mathcal{R}}_{1,6}^0 $ the image of $b$. This is an open set that, as in the previous section, can be described explicitly 
\[
\tilde{\mathcal{R}}_{1,6}^0=\{ (\pi',f')\in \tilde {\mathcal R}_{1,6} \mid f'(p_i)\neq f'(p_j) \;\forall i\neq j \}.
\]

The symmetry of the bigonal construction makes $b$ an isomorphism onto its image.\begin{remark}
		In order to state the next diagram, we need first to extend $\mathcal P_{1,6}$ to the partial compactification $\mathcal R_{1,6}'$ of the double coverings of curves of (arithmetic) genus $1$ which satisfy our assumptions on $\pi'$. This is possible, although we are working with ramified coverings, doing a local analysis around the singular points of  $\pi'$ and imitating Beauville's construction of the extension of the Prym map to admissible coverings. Indeed both $JD'$ and $JC'$ turn out to be $\mathbb{C}^*$-extensions of abelian varieties and a diagram similar to that in \cite{beau} pag. 174 shows that the kernel of the Norm map induced by $\pi'$ (that is $P(D'\ra C')$) is an abelian variety. 
\end{remark}

Similarly to the previous section we have the following diagram:
\begin{equation}
\begin{tikzcd}\label{Fibra2,4}
\mathcal{R}_{2,4}\arrow{r}\arrow{d}{b}	& \mathcal{A}_3^{(1,2,2)}\cong\mathcal{A}_3^{(1,1,2)}\\
\tilde{\mathcal{R}}_{1,6}^0\arrow{r}{\phi} &\mathcal{R}_{1,6}'\arrow{u}
\end{tikzcd}
\end{equation}
where $\phi$ is the forgetful map. The isomorphism $\mathcal{A}_3^{(1,2,2)}\cong\mathcal{A}_3^{(1,1,2)}$ is given by \eqref{lbiso} and sends a polarized abelian threefold to its dual (endowed with the dual polarization). The remaining vertical arrow $\mathcal{P}_{1,6}': \mathcal{R}_{1,6}' \ra \mathcal{A}_3^{(1,1,2)} $ is the extension of the  Prym map $\mathcal P_{1,6}$. 
From a result of Ikeda (\cite{ikeda}), we know that  $\mathcal{P}_{1,6}$ is injective and in fact an embedding (see \cite{naranjo-ortega2}). Therefore the extension to $\mathcal R_{1,6}'$ is generically injective. As before, Theorem \eqref{Pantazis} guarantees the commutativity of \eqref{Fibra2,4}. We conclude with the following:
\begin{teo}\label{teo4}
	The fibre of the Prym map $\mathcal{P}_{2,4}$ over a general $A\in\mathcal{A}_3$ is isomorphic to an elliptic curve $E$ minus 15 points.
	\begin{proof}
		Let us consider a generic polarized abelian threefold in $\mathcal{A}_3^{(1,2,2)} $ and let $(E, \eta, p_1+\ldots + p_6)$ be its unique preimage  in $\mathcal R'_{1,6}$. Call $B=p_1+...+p_6$ the branch divisor . Diagram \eqref{Fibra2,4} says that the fibre over $A$ is isomorphic to the fibre of $\phi$ over  $(E, \eta, p_1+\ldots + p_6)$, hence it is isomorphic to: \[Pic^2(E)\setminus\bigcup_{\substack{p_i,p_j\in B,\\p_i\neq p_j}}\OO_E(p_i+p_j). \]
		The isomorphism $Pic^2(E)\cong E$ concludes the proof.\\
	\end{proof}
\end{teo}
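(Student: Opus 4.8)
The plan is to exploit the commutative diagram \eqref{Fibra2,4} exactly as Theorem \eqref{teo3} exploited \eqref{Fibra2,2}. Since $b\colon \mathcal R_{2,4}\to\tilde{\mathcal R}_{1,6}^0$ is an isomorphism onto its (open) image and $\mathcal P_{1,6}'$ is generically injective (by Ikeda's embedding result for $\mathcal P_{1,6}$, together with the fact that its extension to $\mathcal R_{1,6}'$ stays generically injective), a generic $A\in\mathcal A_3^{(1,2,2)}$ has a unique preimage $(E,\eta,B)\in\mathcal R_{1,6}'$ under the right-hand vertical map, with $B=p_1+\dots+p_6$ six distinct points on the elliptic curve $E$ and $\eta^{\otimes 2}\cong\mathcal O_E(B)$. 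The commutativity of \eqref{Fibra2,4}, which is Pantazis' duality Theorem \eqref{Pantazis} combined with the Birkenhake--Lange duality isomorphism $\mathcal A_3^{(1,2,2)}\cong\mathcal A_3^{(1,1,2)}$, then identifies $\mathcal P_{2,4}^{-1}(A)$ with the fibre $\phi^{-1}(E,\eta,B)$ of the forgetful map.

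Next I would describe that fibre concretely. An element of $\tilde{\mathcal R}_{1,6}$ lying over $(E,\eta,B)$ is the data of the degree $2$ covering $\pi'\colon D'\to E$ determined by $(E,\eta,B)$ together with a choice of $g^1_2$ on $E$, i.e.\ a degree $2$ line bundle $N\in\operatorname{Pic}^2(E)$ (equivalently a point of $E$ under $\operatorname{Pic}^2(E)\cong E$, once one picks a base point). The open condition cutting out $\tilde{\mathcal R}_{1,6}^0$ inside $\tilde{\mathcal R}_{1,6}$ is that $f'(p_i)\neq f'(p_j)$ for all $i\neq j$; since $f'$ is the map associated to $|N|$ and on an elliptic curve two points are in the same fibre of a $g^1_2$ iff they sum (in $\operatorname{Pic}^2$) to $N$, this condition says precisely $N\not\cong\mathcal O_E(p_i+p_j)$ for every pair $i\neq j$. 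Hence
\[
\phi^{-1}(E,\eta,B)\;\cong\;\operatorname{Pic}^2(E)\setminus\bigcup_{\substack{1\le i<j\le 6}}\{\mathcal O_E(p_i+p_j)\},
\]
and since the $p_i$ are in general position the $\binom{6}{2}=15$ excluded classes are pairwise distinct; under $\operatorname{Pic}^2(E)\cong E$ this is an elliptic curve minus $15$ points.

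The main obstacle is not the combinatorics but making sure the diagram \eqref{Fibra2,4} is literally valid over the generic point: one must check that the extended Prym map $\mathcal P'_{1,6}$ is well-defined on all of $\mathcal R'_{1,6}$ (so that taking the unique preimage of $A$ makes sense) and that Pantazis' theorem, which is stated for smooth bigonal towers, continues to hold for the admissible nodal towers appearing in $\tilde{\mathcal R}_{1,6}^0$. This is handled by the local analysis around the nodes of $D'$ sketched in the Remark preceding \eqref{Fibra2,4}: both $JD'$ and $JC'$ are $\mathbb C^*$-extensions of abelian varieties and the kernel of the norm map is still an abelian variety, so the Prym construction and the duality statement extend by continuity/specialization. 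Granting this, and granting that genericity of $A$ forces $(E,B)$ generic (so that the $15$ points are distinct and the surjectivity from Proposition \eqref{dominant} places us in the open locus where the fibre has the expected dimension $1$), the identification above completes the proof.
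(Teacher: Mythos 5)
Your proposal is correct and follows essentially the same route as the paper: use the commutativity of diagram \eqref{Fibra2,4} (Pantazis duality plus the Birkenhake--Lange isomorphism) and the generic injectivity of the extended map $\mathcal P'_{1,6}$ to reduce to the fibre of the forgetful map $\phi$ over $(E,\eta,B)$, then identify that fibre with $Pic^2(E)$ minus the $15$ classes $\mathcal O_E(p_i+p_j)$. Your explicit translation of the open condition $f'(p_i)\neq f'(p_j)$ into $N\not\cong\mathcal O_E(p_i+p_j)$, and your flagging of the need to extend the Prym map and Pantazis' theorem to the nodal towers (handled in the paper's Remark), are exactly the points the paper relies on.
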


\section{Case $g=3, r=2$}
Let us now look at the map \begin{equation}
		\mathcal{P}_{3,2}: \mathcal{R}_{3,2}\to \mathcal{A}_3.
\end{equation}
Note that here the associated Prym varieties are principally polarized. 

Each non-hyperelliptic curve of genus $3$ admits a $1$-dimensional space of $g^1_3$'s.  In particular, seeing $C$ as a quartic plane curve and considering the line $l=p_1+p_2$ passing through $p_1$ and $p_2$, we can always get two degree 3 maps: they are defined considering the two different projections from one of the two remaining points $x,y$ of the intersection $C\cdot l$.
In fact if we consider the canonical divisor $$K_C=p_1+p_2+x+y$$ we get $h^0(C,\omega_C(-x))=h^0(C,\omega_C(-y))=2$ and we can use the associated linear systems to define the 3:1 maps to $\PP$. Call them $f_x$ and $f_y$. Both have, by definition, the two branch points $p_1,p_2$ on the same fibre and they are the unique trigonal maps on $C$ with this property. \\

We will use the following diagram to describe the fibres of $\mathcal{P}_{3,2}$:
\begin{equation}\label{Diag}
	\begin{tikzcd}
	&\mathcal{M}_{3,\ast}^{tet}\arrow{r}{2:1}&\mathcal{M}q_{3,\ast}^{tet}\arrow{dr}\\
	\mathcal{R}b_{3,sp}^{tr}\arrow{d}{2:1}\arrow{ur}{\cong}& & &\mathcal{M}_{3}\arrow{d}{j}\\
	\mathcal{R}_{3,2} \arrow{rrr}{\mathcal{P}_{3,2}}& & &\mathcal{A}_3
	\end{tikzcd}
\end{equation}
Let $\mathcal{R}b_{3,sp}^{tr}$ be the moduli space of pairs $(\cov, M)$, where $\pi$ is a ramified double cover of a smooth trigonal curve $C$ of genus 3, $M$ is a $g^1_3$ on $C$ such that the branch locus of $\pi$ is contained in one of its fibres. By above considerations, the forgetful map $\mathcal{R}b_{3,sp}^{tr}\ra\mathcal{R}_{3,2} $ is a 2:1 map.
The ramified trigonal construction (as recalled in \eqref{ramifiedtrigonal}) gives the isomorphism between $\mathcal{R}b_{3,sp}^{tr}$ and $\mathcal{M}_{3,\ast}^{tet}$ . \\
We will study the fibre of $\mathcal{P}_{3,2}$ using the map $	\mathcal{M}_{3,\ast}^{tet}\ra \mathcal{M}_{3}$. Note that in the above diagram it factors as the composition of two maps. The first one is $		\mathcal{M}_{3,\ast}^{tet}\ra	\mathcal{M}q_{3,\ast}^{tet}$ defined as the quotient map associated with an involution that acts on $\mathcal{M}_{3,\ast}^{tet}$. We will describe this action later. The second map $	\mathcal{M}q_{3,\ast}^{tet}\ra \mathcal{M}_{3}$ is the forgetful map. Finally,
$j$ is just the Torelli morphism.\\
 
Let us take a general abelian threefold $A\in\mathcal{A}_3$. We can assume that $A$ is the Jacobian of a general curve $X$ of genus $3$. In order to study the fibres of 
$	\mathcal{M}_{3,\ast}^{tet}\ra \mathcal{M}_{3}$ we need to recall some facts on $g^1_4$'s on $X$.
\subsection{The blow-up}

Let $X\subset \mathbb P^2=\mathbb P (H^0(X,\omega_X)^{*}) $ be a non-hyperelliptic curve of genus $3$ canonically embedded. 
Let $\mathsf G^1_4(X)$ be the variety of all $g^1_4$ linear series on $X$, complete or not (see \cite{ACGH}, chapter IV). Then by Riemann-Roch  
\[
\psi:\mathsf G^1_4(X) \lra W^1_4(X) =Pic ^4(X)
\]
is a birational surjective map which is an isomorphism out of $W^2_4(X)=\{\omega_X\}$. In fact $$\text{Supp}(\mathsf G^1_4(X))=\{(L,V)\mid L\in Pic ^4(X), V\in Gr(2,H^0(X,L))\}.$$ Thus over $L\neq \omega_X$ the fibre is just the complete linear series $(L,H^0(X,L))$. Call $E$ the preimage of the canonical sheaf, 
then $\mathsf G^1_4(X)\smallsetminus E \cong Pic^4(X)\smallsetminus \{\omega_X\}$. The set $E$ parametrizes all the non-complete $g^1_4$ linear series on $X$ which correspond to 
\[
Gr(2, H^0(X
,\omega_X))\cong \{ \text{lines in }\mathbb P H^0(X,\omega_X)\} = \mathbb P (H^0(X, \omega_X)^*).
\]
In other words $\mathsf G^1_4(X)$ is the blow-up of $Pic^4(X)$ at $\omega_X$ and the points of the exceptional divisor correspond to points in the 
plane $\mathbb P^2$ where the curve $X$ is canonically embedded. 
The linear series is the projection from this point and if the point belongs to $X$ itself, then the linear series has a base point.\\

We can assume that $X$ has exactly $28$ bitangents, 
that is that there are not hyperflexes in $X$ (points $p$ such that the tangent line at $p$ intersects $X$ in $4p$). 
In fact, the curves with hyperflexes define a divisor in $\mathcal M_3$.  Each bitangent defines a divisor of the form $2p_i+2q_i$ 
in the canonical linear series of $X$. Denote by $\mathcal B \subset X^{(2)}$ the set $\{p_i+q_i \mid i=1,\ldots , 28\}$ 
and let $$S:=\text{Bl}_\mathcal{B}X^{(2)} $$ be the surface obtained by blowing-up $X^{(2)}$ at $\mathcal B$. 
By the universal property of the blow-up we have a diagram:

\begin{equation}\label{blowup}
	\begin{tikzcd}
	S   \ar[d] \ar{r}{\varphi }   & \mathsf {G}^1_4(X) \ar{d}{\psi}\\
	X^{(2)} \ar{r}{\varphi_0 }& Pic^4(X),
	\end{tikzcd}
\end{equation}
where $\varphi_0(x+y)=\mathcal O_X(2x+2y)$. 

Let us now consider the involution \begin{align*}
	i: Pic^4(X)&\ra  Pic^4(X)\\
	L&\mapsto \omega_X^{\otimes2}\otimes L\meno.
	\end{align*}
\begin{prop}\label{lift}
	The involution $i$ on $Pic^4(X)$ lifts to an involution on $\mathsf G^{1}_4 (X)$ and it acts as the identity on the exceptional divisor $E$. Moreover, by construction, $i$ leaves $\varphi (S)$ invariant.
	\begin{proof}
		To simplify the notation put $Pic= Pic^4(X)$. The involution $i$ has an isolated fixed point at $\omega_X$. In fact the fixed points are the line bundles $L$ such that $L^{\otimes2}=\omega_X^{\otimes2}$ and this happens if and only if $L =\omega_X\otimes \eta$, where $\eta$ is a two torsion point. 
		
		The exceptional divisor $E$ is equal to $ \mathbb P (T_{\omega_X}Pic)$ so the action of $i$ on $E$ is given by the projectivisation of the differential of $i$ at $\omega_X$, $di_{\omega_X}$. We claim that $di_{\omega_X}$ is $-Id$, hence it is the identity on $E = {\mathbb P}(T_{\omega_X}Pic)$. In fact by the linearisation theorem of Cartan, there exist local coordinates $z$ in a neighborhood $U$ of $\omega_X$ such that  in these coordinates, $i(z) = Az$, where $A$ is  a matrix such that $A^2 = I$. Thus the eigenvalues of $A$ are $\pm1$. But if there is an eigenvalue equal to 1, there would exist a space of fixed points which is positive dimensional. This leads to a contradiction. 
		
		Finally let us take $x+y\in X^{(2)}$, as in diagram \eqref{blowup}, and let us denote with $x',y'$ the two remaining points of the intersection of $X$ with the line $l=x+y$. In this way $K_X=x+y+x'+y'$ and thus the last statement follows from $i(\varphi_0(x+y))=\mathcal O_X(2x'+2y')$. 
		
	\end{proof}
\end{prop}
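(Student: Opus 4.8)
The plan is to establish the three assertions in turn: that $i$ lifts to the blow-up $\mathsf{G}^1_4(X) = \mathrm{Bl}_{\omega_X} Pic^4(X)$, that the induced action on the exceptional divisor $E$ is trivial, and that $\varphi(S)$ is $i$-invariant. The first assertion is formal: since $i(\omega_X) = \omega_X^{\otimes 2}\otimes\omega_X\meno = \omega_X$, the automorphism $i$ of $Pic^4(X)$ fixes the center of the blow-up, hence preserves its ideal sheaf and extends uniquely to an automorphism $\tilde{\imath}$ of $\mathsf{G}^1_4(X)$; uniqueness of the extension forces $\tilde{\imath}^2 = \mathrm{id}$.

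For the action on $E$, I would identify $Pic^4(X)$ with the Jacobian $JX$ via $L\mapsto L\otimes\omega_X\meno$. Under this translation $\omega_X$ corresponds to the origin and $i$ becomes the inversion morphism of $JX$, whose differential at $0$ is $-\mathrm{Id}$ on $T_0 JX \cong H^0(X,\omega_X)^*$; therefore $di_{\omega_X} = -\mathrm{Id}$ on $T_{\omega_X}Pic^4(X)$. Since $E$ is canonically $\mathbb P\big(T_{\omega_X}Pic^4(X)\big)$ and $\tilde{\imath}|_E$ is the projectivisation of $di_{\omega_X}$, and $-\mathrm{Id}$ acts trivially on a projective space, $\tilde{\imath}$ restricts to the identity on $E$. (If one prefers to avoid the group law: $i^2 = \mathrm{id}$ gives $(di_{\omega_X})^2 = \mathrm{Id}$, so the eigenvalues are $\pm1$; a $(+1)$-eigenvalue would yield, via Cartan linearisation, a positive-dimensional component of $\mathrm{Fix}(i)$ through $\omega_X$, contradicting the fact that $\mathrm{Fix}(i) = \{\omega_X\otimes\eta : \eta\in JX[2]\}$ is finite.)

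For the invariance of $\varphi(S)$, recall that $\varphi_0(x+y) = \mathcal O_X(2x+2y)$ and that if $x',y'$ denote the residual points of the line $\overline{xy}\subset\mathbb P^2$, so that $K_X = x+y+x'+y'$, then
\[
i\big(\varphi_0(x+y)\big) = \omega_X^{\otimes 2}\otimes\mathcal O_X(-2x-2y) = \mathcal O_X(2x'+2y') = \varphi_0(x'+y').
\]
Thus $i$ intertwines the residuation (birational) involution $x+y\mapsto x'+y'$ on $X^{(2)}$ with itself, and in particular carries the irreducible surface $\varphi_0(X^{(2)})\subset Pic^4(X)$ to itself. Since $\psi$ is an isomorphism over $Pic^4(X)\setminus\{\omega_X\}$ and $\varphi(S)$ is the closure in $\mathsf{G}^1_4(X)$ of $\psi\meno\big(\varphi_0(X^{(2)})\setminus\{\omega_X\}\big)$ — the blow-up $S=\mathrm{Bl}_{\mathcal B}X^{(2)}$ merely resolving the $28$ bitangent points $\mathcal B$, which are precisely where $\varphi_0$ meets $\omega_X$ — it follows that $\tilde{\imath}$ preserves $\varphi(S)$.

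I expect the only real point of substance to be the computation in the second step that $di_{\omega_X}$ is exactly $-\mathrm{Id}$ rather than merely an involution: the clean route is the identification with inversion on $JX$, while the alternative Cartan-linearisation argument needs the a priori input that $\omega_X$ is an isolated fixed point of $i$. Everything else — the existence of the lift and the residuation identity — is routine.
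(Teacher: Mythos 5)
Your proof is correct and follows the same overall skeleton as the paper's: existence of the lift because $i$ fixes the centre $\omega_X$ of the blow-up, triviality of the action on $E=\mathbb P(T_{\omega_X}Pic^4(X))$ via $di_{\omega_X}=-\mathrm{Id}$, and invariance of $\varphi(S)$ from the residuation identity $i(\varphi_0(x+y))=\mathcal O_X(2x'+2y')$. The one genuine difference is how you compute $di_{\omega_X}$: your primary argument translates $Pic^4(X)$ to $JX$ by $L\mapsto L\otimes\omega_X^{-1}$, under which $i$ becomes the inversion of the abelian variety, whose differential at the origin is $-\mathrm{Id}$; the paper instead argues intrinsically, using Cartan linearisation together with the observation that $\mathrm{Fix}(i)=\{\omega_X\otimes\eta:\eta\in JX[2]\}$ is finite to rule out a $+1$ eigenvalue. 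Your route is shorter and avoids the linearisation theorem, at the cost of invoking the group structure; the paper's route is self-contained on $Pic^4(X)$ and you reproduce it correctly as your parenthetical alternative. Your explicit remark that the lift exists and is unique because $i$ preserves the ideal sheaf of the centre, and your description of $\varphi(S)$ as the closure of $\psi^{-1}(\varphi_0(X^{(2)})\setminus\{\omega_X\})$, make precise two points the paper leaves implicit; both are accurate.
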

Proposition \ref{lift} guarantees that $i$ naturally induces an involution on $		\mathcal{M}_{3,\ast}^{tet}$ (the moduli space of pairs $(C,g^1_4)$ of curves $C$ of genus 3 with a 4:1 map to $\PP$ with at least an \'etale point on each fibre and a special fibre of type $2p+2q$). We still denote this involution by $i$.

\subsection{Geometric description of the complete linear series}

In the case of complete linear series $g^1_4$, we can describe geometrically the divisors in the image of $\phi$: 
fix two different points $r, s \in X$ such that the line  $l=r+s$ intersects $X$ in four different points. Put $$l\cdot X= r+s+u+v.$$ 

Denote by $t_r, t_s, t_u, t_v$ the tangent lines to $X$ at the points $r, s, u, v$ respectively.  Let us define:
\[
\mathcal F_{r ,s}=\{\text{conics through } u, v \text{ tangent to } t_u, t_v \text{ at } u, v \text{ resp.}\}\cong \mathbb P^1.
\]
If $Q \in \mathcal F_{r,s}$, then $Q\cdot X= 2u+2v+p_1+p_2+p_3+p_4$. All these degree $8$ divisors are linearly equivalent on $X$ 
(they belong to $\vert 2 K_X \vert $ since we are intersecting with a conic). One of these conics is the double line $l^2\in \mathcal F_{r,s}$ which intersects $X$ in the divisor
$2u+2v+2r+2s$. Therefore:
\[
2u+2v+2r+2s\sim 2u+2v+ p_1+p_2+p_3+p_4,
\]
hence $2r+2s\sim p_1+p_2+p_3+p_4$. The description of the $g^1_4$ is now simple: given a point $p_1 \in X$, there is a unique $Q \in \mathcal F_{r,s}$ passing
through $p_1$. Then there is a map $f_{r,s}: X  \lra \mathcal F_{r,s}\cong \mathbb P^1$, sending $p_1$ to this conic. The fibre is the divisor $p_1+p_2+p_3+p_4$ 
considered above. Notice that  one of the fibres is $2r+2s$, hence $f_{r,s}$ is one of the $g^1_4$ we are looking for. 

In the same way taking the pencil $\mathcal F_{u,v}$ of conics tangent to $t_r$ (resp. $t_s$) at $r$ (resp. $s$), intersecting the conics with $X$ and subtracting 
the divisor $2r+2s$ we obtain the linear series $f_{u, v}:X\lra \mathcal F_{u, v}\cong \mathbb P^1$. 

Observe that the involution $i$ sends $f_{r,s}$ to $f_{u,v}$.

\subsection{The curve of $g^1_4$'s with two special fibres}

We need to determine the curve on $\varphi (S)\subset \mathsf G^1_4(X)$ given by the $g^1_4$'s on $X$ with two fibres of type $2p+2q$. In the case of
linear series in $E$ (the non-complete linear series), these clearly correspond to points which are in two bitangents. In the other cases, we have to understand when a  
map $f_{r, s}$ as above has a second fibre of the form $2x+2y$. Thanks to our description now we know that we only have to look at
\[
\begin{aligned}
\Gamma :=\{r+s \in X^{(2)} \mid \exists\, \text{a conic}\, Q \,\text{(of rank at least 2)}  \\
\text{ with } Q\cdot X =2u+2v+2x+2y \}.
\end{aligned}
\]

Consider the composition of maps 
\[
X^{(2)}\times X^{(2)} \xrightarrow{m}  X^{(4)}  \xrightarrow{s} Pic^8(X), 
\]
where $m$ is the addition of divisors and $s$ is the ``square'' map $\sum p_i \mapsto \mathcal O_X(2\sum p_i)$. The map $s$ is surjective since it is 
the composition of two surjective maps: $X^{(4)}\lra Pic^4(X)$ and $Pic^4 (X) \lra Pic^8(X)$, $L\to 2L$.

We observe that $s^{-1}(\omega_X^{\otimes2})$ is the disjoint union of $2^{2\cdot g(X)}=2^6=64$ components. One is isomorphic to a projective plane and it is simply the canonical linear series. This component is rather
uninteresting since it gives only double lines $l^2$. The other $63$ components are projective lines corresponding to the paracanonical systems $\vert \omega_X \otimes \alpha\vert $, $\alpha \in JX_2\smallsetminus \{0\}$. A divisor $D$ in one of these lines is formed by $4$ points 
not in a line and such that there is a conic intersecting $X$ in $2D$. Define $\Gamma_{\alpha }:=m^{-1}(\vert \omega_X \otimes  \alpha \vert) $ for a non trivial
2-torsion point $\alpha $. Then
\[
\Gamma = \bigcup_{\alpha \in JX_2\smallsetminus \{0\}} \Gamma _{\alpha}.
\]

Since $\Gamma $ does not contain points of $\mathcal B$, its preimage in $S$ is isomorphic to $\Gamma$ hence it is a disjoint union of curves in $S$ that we still denote by $\Gamma $. 

Call
$\mathsf U_X$ the open set obtained subtracting to $S$ the set $\Gamma $ and the set of points in the exceptional divisors corresponding to points belonging to 
two bitangents. 

\subsection{The involution on $\mathsf G^1_4(X)$}

We want to prove that the natural involution in $\mathcal R_{3,sp}^{tr}$, which exchanges the trigonal maps $f_x$ and $f_y$, corresponds, via the trigonal construction, to the involution 
\[
i:(X,L) \mapsto (X,\omega_X^{\otimes2}\otimes L\meno) 
\]
in $\mathcal M^{tet}_{3 ,*}$.
\begin{remark}
	The involution in $\mathcal Rb_{3, sp}^{tr}$ does not exchange the covering (it acts only on the trigonal series). Since the Prym variety of the covering is isomorphic to the Jacobian of the associated tetragonal curve we know that the involution $i$ has to leave the curve $X$ invariant.
	
\end{remark}

To prove the equality of the involutions we go in the opposite direction: we fix the quartic $X$ as above and the two complete linear series $f_{r, s}$, $f_{u,v}$ such
that $r, s, u, v$ are on a line. These linear series correspond by the involution $i$. It is enough to prove the coincidence of both involutions for 
these examples since they are the generic elements. \\

Define (following Recillas, see \cite{lange-birkenhake}):
\[
\tilde D_{r,s} =\{ a+b \in X^{(2)} \mid f_{r,s}(a)=f_{r,s}(b) \}.
\]
Notice that there are involutions $\sigma_{r, s}$ (and resp. $\sigma_{u, v}$) on the curves $\tilde D_{r,s}$ (and resp. $ \tilde D_{u,v} $) sending each pair of points to the complement in the corresponding linear series. We denote by $\tilde C_{r, s}$ and $\tilde C_{u, v}$ the quotient (trigonal) curves. Recillas trigonal construction says that there are isomorphisms of principally polarized abelian varieties:
\[
P(\tilde D_{r, s}, \tilde{C}_{r, s}) \cong JX \cong P(\tilde D_{u, v}, \tilde{C}_{u, v}).
\]
The assignment $(X,f_{r, s}) \mapsto (\tilde D_{r, s}, \tilde{C}_{r, s}, M)$ is the inverse of the trigonal construction. $M$ is the $g^1_3$ on $\tilde{C}_{r, s}$ which sends $[p_1+p_2]=[p_3+p_4]$ to the corresponding conic in $\mathcal F_{r, s}$. Its fibre is of the form $\{[p_1+p_2], [p_1+p_3], [p_1+p_4]\}$. Notice that here tetragonal maps $f_{r, s}$ (and resp. $ f_{u, v} $) come with two simple ramification points over $l^2$. This implies that, unlike what occurs in Recillas construction, $(\tilde D_{r, s}, \tilde{C}_{r, s})$ (and resp. $(\tilde D_{u, v}, \tilde{C}_{u, v})$) is an admissible double cover of type $(\ast)$ in the sense of Beauville. Donagi extended the trigonal construction to admissible double covers (for details see \cite[Theorem 2.9]{donagi}). 
Call $(D_{r, s}, {C}_{r, s})$ (and resp. $(D_{u, v}, {C}_{u, v})$) the normalizations of these coverings.

\begin{prop}\label{Propinvoluzione}
	\begin{enumerate}
		\item [a)] There is a canonical isomorphism $\tilde D_{r, s}\xrightarrow{\lambda }  \tilde D_{u, v}$ compatible with the involutions: 
		$\lambda \circ \sigma_{r, s}=\sigma_{u, v} \circ \lambda $. In particular  there is an isomorphism $\tilde{C}_{r, s} \xrightarrow{\overline \lambda} \tilde{C}_{u, v}$. 
		\item [b)] Let $b \in \tilde{C}_{r,s}$ be the branch point of $\tilde  D_{r, s} \xrightarrow{\pi_{r,s}} \tilde{C}_{r,s}$, then $b':=\overline \lambda (b)$ is the branch point of $\tilde D_{u, v}
		\lra \tilde{C}_{u, v}$. 
		\item [c)]There exist points $x \in C_{r,s}$ and $y\in C_{u, v}$ such that $\vert b_1+b_2+x \vert $ and $\vert b_1'+b_2'+y \vert $ are the corresponding
		trigonal series (where $b_i$ and $b_i'$ are the preimages of $b$ and $b'$ in the normalizations of $\tilde{C}_{r,s}$ and $\tilde{C}_{u,v}$).
		\item [d)] There is an isomorphism $\mathcal O_{C_{u, v}}(b_1'+b_2'+\overline \lambda (x)+y)\cong \omega_{C_{u,v}}$.
	\end{enumerate}
\end{prop}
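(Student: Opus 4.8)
The plan is to realise $\lambda$ as the residuation involution of the canonical (plane) embedding of $X$, and then to deduce (b)--(d) from the plane geometry of the quartic $X$ together with the fact (Theorem \ref{ramifiedtrigonal}) that the ramified trigonal construction is an isomorphism.

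\textbf{Part (a).} Since $X$ is non-hyperelliptic of genus $3$, $h^0(\mathcal O_X(a+b))=1$ for every $a+b\in X^{(2)}$, so there is a well-defined morphism $\rho\colon X^{(2)}\to X^{(2)}$ carrying $a+b$ to the unique effective divisor in $|\omega_X\otimes\mathcal O_X(-a-b)|$; geometrically $\rho(a+b)$ is the residual intersection of the line through $a$ and $b$ with $X$, so $\rho$ is an involution and hence an automorphism of the smooth surface $X^{(2)}$. Because $r+s+u+v\in|\omega_X|$ we have $2r+2s\sim\omega_X^{\otimes 2}\otimes\mathcal O_X(2u+2v)\meno$, and a short computation with divisor classes shows that $\rho$ maps the reduced curve $\tilde D_{r,s}$ onto $\tilde D_{u,v}$ and that $\rho\comp\sigma_{r,s}=\sigma_{u,v}\comp\rho$ on $\tilde D_{r,s}$: for $a+b\in\tilde D_{r,s}$ both $\rho(\sigma_{r,s}(a+b))$ and $\sigma_{u,v}(\rho(a+b))$ are effective divisors in the class $a+b+u+v-r-s$, hence coincide. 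Being the restriction of an automorphism of $X^{(2)}$ taking one reduced curve onto another, $\rho$ induces an isomorphism $\tilde D_{r,s}\to\tilde D_{u,v}$. Setting $\lambda:=\rho\comp\sigma_{r,s}$ we get $\lambda\comp\sigma_{r,s}=\rho=\sigma_{u,v}\comp\lambda$, so $\lambda$ is an isomorphism intertwining the two involutions; it descends to an isomorphism $\overline\lambda\colon\tilde C_{r,s}\to\tilde C_{u,v}$, which in turn lifts to the normalisations $C_{r,s}\to C_{u,v}$ (still denoted $\overline\lambda$).

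\textbf{Parts (b) and (c).} The special fibre $2r+2s$ is the unique totally ramified fibre of $f_{r,s}$ — every other fibre carries an étale point because $(X,f_{r,s})\in\mathcal{M}_{3,\ast}^{tet}$ — so $r+s$ is the unique fixed point of $\sigma_{r,s}$; it is the node of $\tilde D_{r,s}$, and its image $b=[r+s]$ is the branch point (node) of $\pi_{r,s}$. Since $\sigma_{r,s}$ fixes $r+s$ we get $\lambda(r+s)=\rho(r+s)=u+v$, the residual on $X$ of the line joining $r$ and $s$, which is $l$. As $u+v$ is likewise the node of $\tilde D_{u,v}$, with image the branch point $b'=[u+v]$ of $\tilde D_{u,v}\to\tilde C_{u,v}$, we conclude $\overline\lambda(b)=b'$, which is (b). For (c): by definition of $\mathcal{R}b_{3,sp}^{tr}$ the branch divisor $b_1+b_2$ of $D_{r,s}\to C_{r,s}$ lies in a fibre of the $g^1_3$ $M$; letting $x\in C_{r,s}$ be the third point of that fibre and using that $M$ is complete, $M=|b_1+b_2+x|$. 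Symmetrically $M'=|b_1'+b_2'+y|$ on $C_{u,v}$.

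\textbf{Part (d).} By (b), $\overline\lambda\colon C_{r,s}\to C_{u,v}$ sends $\{b_1,b_2\}$ to $\{b_1',b_2'\}$, so $\overline\lambda_*M$ is a $g^1_3$ on $C_{u,v}$ whose fibre through $b_1',b_2'$ is $b_1'+b_2'+\overline\lambda(x)$. I claim $\overline\lambda_*M\neq M'$: if they were equal, then $(\tilde D_{u,v}\to\tilde C_{u,v},\overline\lambda_*M)$ and $(\tilde D_{u,v}\to\tilde C_{u,v},M')$ would be the same point of $\mathcal{R}b_{3,sp}^{tr}$, but via $\lambda$ the former is isomorphic to $(\tilde D_{r,s}\to\tilde C_{r,s},M)$, whose image under the trigonal construction \eqref{IsoLangeOrtega} is $(X,f_{r,s})$, whereas the image of the latter is $(X,f_{u,v})$; since \eqref{IsoLangeOrtega} is an isomorphism and, for a generic quartic $X$ and a generic line $l$, there is no automorphism $\psi$ of $X$ with $\psi^*(2u+2v)\sim 2r+2s$ (indeed $r+s-u-v\notin JX_2$), this is a contradiction. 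Hence $M'$ and $\overline\lambda_*M$ are the two distinct $g^1_3$'s on the plane quartic $C_{u,v}$ having $b_1'$ and $b_2'$ in a common fibre; writing the intersection of $C_{u,v}$ with the line joining $b_1'$ and $b_2'$ as $b_1'+b_2'+x''+y''$, these two series are $|K_{C_{u,v}}-x''|$ and $|K_{C_{u,v}}-y''|$, and comparing the fibres through $b_1',b_2'$ forces $\{\overline\lambda(x),y\}=\{x'',y''\}$. Therefore $\mathcal O_{C_{u,v}}(b_1'+b_2'+\overline\lambda(x)+y)\cong\mathcal O_{C_{u,v}}(b_1'+b_2'+x''+y'')\cong\omega_{C_{u,v}}$. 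The step I expect to be most delicate is precisely this non-degeneracy $\overline\lambda_*M\neq M'$ (which is exactly the assertion that residuation genuinely exchanges, rather than fixes, the two special trigonal structures), together with the bookkeeping in (a) ensuring that the globally defined $\rho$ is compatible with the node of $\tilde D_{r,s}$ and with the ramification of $f_{r,s}$, so that it really induces an isomorphism of admissible covers.
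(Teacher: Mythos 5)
Your $\lambda$ coincides with the paper's: the paper defines $\lambda(p_1+p_2)$ as the unique effective divisor in $\vert p_1+p_2+u+v-r-s\vert$, and since $K_X=r+s+u+v$ your composite $\rho\circ\sigma_{r,s}$ lands in exactly that class, so parts a)--c) are essentially the paper's argument repackaged through the global residuation involution of $X^{(2)}$ (the paper instead checks $h^0=1$ by Serre duality, leaves the intertwining ``as an exercise'', and for c) invokes Donagi's description of the degenerate fibre, which in addition identifies $x=\pi_{r,s}(2r)=\pi_{r,s}(2s)$). Part d) is where you genuinely diverge. The paper writes $K_{C_{r,s}}=b_1+b_2+x+z$ and $K_{C_{u,v}}=b_1'+b_2'+y+w$, notes that $\overline\lambda$ sends $\{x,z\}$ to $\{y,w\}$, and excludes $\overline\lambda(x)=y$ by a direct computation on $X$: since $x=\pi_{r,s}(2r)$, that equality would force $\lambda(2r)\in\{2u,2v\}$, i.e.\ $r+u+v-s\sim 2u$ or $2v$, which is impossible for four distinct collinear points. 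You obtain the same non-degeneracy (your $\overline\lambda_*M\neq M'$ is equivalent to $\overline\lambda(x)\neq y$, since a $g^1_3$ on a non-hyperelliptic genus-$3$ curve is determined by any one of its divisors) from the injectivity of the Lange--Ortega trigonal construction together with $[(X,f_{r,s})]\neq[(X,f_{u,v})]$ in $\mathcal M^{tet}_{3,*}$. Both routes are valid; the paper's check is purely local and needs only that $r,s,u,v$ are distinct, while yours additionally relies on genericity ($\operatorname{Aut}(X)$ trivial and $2r+2s\not\sim 2u+2v$). These hypotheses are indeed available here --- the proposition is only proved for the generic elements, and $2r+2s\sim 2u+2v$ would give $f_{r,s}$ a second special fibre, contradicting $(X,f_{r,s})\in\mathcal M^{tet}_{3,*}$ --- but your argument is conditional on them whereas the paper's is not, and your moduli-theoretic detour buys a cleaner conceptual explanation of \emph{why} residuation must exchange, rather than fix, the two special trigonal structures.
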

\begin{proof}
	Let $p_1+p_2 \in \tilde D_{r,s}$. By definition $h^0(X,\mathcal O_X(2r+2s-p_1-p_2))=1$. Thus, by Serre duality we have that
	\[
	\begin{aligned}
	1= & h^0(X,\omega_X(p_1+p_2-2r-2s))=\\ &h^0(X,\mathcal O_X(r+s+u+v+p_1+p_2-2r-2s))= \\ &h^0(X,\mathcal O_X(u+v+p_1+p_2-r-s)).
	\end{aligned}
	\]
	Let $q_1+q_2\in \vert u+v+p_1+p_2-r-s \vert$. Let us see that $q_1+q_2 \in \tilde D_{u,v}$. Indeed:
	\[
	\begin{aligned}
	& h^0(X,\mathcal O_X(2u+2v-q_1-q_2))=\\ &h^0(X,\mathcal O_X(2u+2v-u-v-p_1-p_2+r+s))\\ =&h^0(X,\mathcal O_X(u+v+r+s-p_1-p_2))=1.
	\end{aligned}
	\]
	Therefore the map $\tilde D_{r, s}\xrightarrow{\lambda }  \tilde D_{u, v}$ given by   
	\[
	\lambda (p_1+p_2)=q_1+q_2\sim p_1+p_2+u+v-r-s,
	\]
	is well defined and the compatibility with the involutions is an exercise. This proves a). Observe that b) is an obvious consequence once we notice that $\sigma_{r, s}$ has a unique fixed point given by $r+s$. The same occurs in $u+v$ for $\sigma_{u, v}$.  From point a) we know that $\lambda(r+s)=u+v$. Thus, calling $b$ and $b'$ the images of $r+s$ (resp. $u+v$) in $\tilde{C}_{r,s}$ (resp. in $\tilde{C}_{u,v}$), we get $\overline \lambda(b)=b'.$
	
	To prove c) we refer to the description of the extended trigonal construction given by Donagi. Indeed, we have that the fibre of the 3:1 map $\tilde{C}_{r,s}\ra\PP$ over $l^2$ consists of a node in $b$ and an additional point $x=\pi_{r,s}(2r)=\pi_{r,s}(2s)$. The normalization of $\tilde{C}_{r,s}$ gives the trigonal series $|b_1+b_2+x|.$ The same occurs for $\tilde{C}_{u,v}$ calling $y=\pi_{u,v}(2u)=\pi_{u,v}(2v)$.
	
	Finally we conclude with d). First notice that with an abuse of notation we are still calling $\overline{\lambda}$ the isomorphism induced between the normalized curves ${C}_{r,s}\ra{C}_{u,v}$. Then consider $C_{r,s}$ and $C_{u,v}$ as quartic plane curves and the canonical divisors obtained intersecting $C_{r,s}$ (resp. $C_{u,v}$) with the line $b_1+b_2$ (resp. $b_1'+b_2'$). Thus we get $$K_{C_{r,s}}=x+b_1+b_2+z\quad \text{and} \quad K_{C_{u,v}}=y+b'_1+b'_2+w.$$
	Now we have two possibilities: $$w=\overline{\lambda}(x)\quad \text{or} \quad w=\overline{\lambda}(z).$$
	We claim that $ w=\overline{\lambda}(x) $. In fact if $ w=\overline{\lambda}(z) $, since by construction $x=\pi_{r,s}(2r)=\pi_{r,s}(2s)$ then we would have $\lambda(2r)=2u$ or $\lambda(2r)=2v$. But this contradicts the definition of $\lambda$ given above. Hence we get  $\mathcal O_{C_{u, v}}(b_1'+b_2'+\overline \lambda (x)+y)\cong \omega_{C_{u,v}}$.\\
\end{proof}
\begin{remark}
		The isomorphism of Proposition \eqref{Propinvoluzione}[$d)$], gives the compatibility between the two trigonal maps $f_x$ and $f_y$ defined for the general element of $\mathcal{R}b_{3,sp}^{tr}$ and the two trigonal maps obtained on $C_{r,s}$ (resp. $C_{u,v}$) projecting from $x$ or from $z$ (resp. from $\overline{\lambda}(x)$ or from $y$).\\
\end{remark}
\begin{teo}\label{teo5}
	The  fibre of $\mathcal{P}_{3,2}$ at a generic $JX$ is isomorphic to  the quotient of $\varphi (\mathsf U_X)\subset \mathsf G^1_4(X)$ by the involution $i$.
	\begin{proof}
		Starting with a general 3-dimensional abelian variety, i.e. the Jacobian of a curve $X$, diagram \eqref{Diag} says that the fibre of $\mathcal{P}_{3,2}$ over $JX$ is described by the fibre over $X$ of the map $\mathcal{M}_{3,*}^{tet}\ra\mathcal{M}_{3}$. Thus we need to look for all tetragonal maps $k:X\ra\PP$ which have an \'{e}tale point on every fibre and only a fibre with exactly two ramification points of order 2.
		
		 We consider the map $\varphi_0$ in \eqref{blowup} and we look at its image in $Pic^4(X)$. The blow up $S$ of $X^{(2)}$ at $\mathcal{B}$ recovers all tetragonal maps obtained as projections from points on bitangent lines. Hence, considering the open set $\mathsf{U}_X$, we avoid tetragonal maps which have two fibres of type $2p+2q$ (which are not allowed by the trigonal construction). 
		 
		 Finally, since $\mathcal{R}b_{3,sp}^{tr}$ has an involution which exchanges the two special trigonal series, we let $i$ act on $\mathcal{M}_{3,*}^{tet}$ to identify the two tetragonal maps on $X$ which correspond (by the isomorphism \eqref{IsoLangeOrtega}) to the trigonal maps $f_x$ and $f_y$ and we denote by $\mathcal{M}q_{3,*}^{tet}$ the corresponding moduli space. Letting $i$ act on  $\varphi (\mathsf U_X)$, we obtain the fibre over $JX$.\\
	\end{proof}
\end{teo}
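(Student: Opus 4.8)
The plan is to read the fibre off the commutative diagram \eqref{Diag}. Fix a generic $A\in\mathcal A_3$; by the classical Torelli theorem in genus $3$ we may write $A=JX$ with $X$ a generic (hence non-hyperelliptic) curve, and $X$ is uniquely recovered from $JX$. The left vertical arrow $\mathcal{R}b_{3,sp}^{tr}\to\mathcal R_{3,2}$ is $2{:}1$, its deck transformation being the interchange of the two trigonal pencils $f_x,f_y$ attached to a fixed cover; by Proposition \ref{Propinvoluzione} and the discussion preceding it, the ramified trigonal isomorphism \eqref{IsoLangeOrtega} carries this interchange to the involution $i$ on $\mathcal M_{3,\ast}^{tet}$. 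Hence \eqref{Diag} identifies $\mathcal R_{3,2}$ with $\mathcal Mq_{3,\ast}^{tet}=\mathcal M_{3,\ast}^{tet}/i$ compatibly with the two maps to $\mathcal A_3$, and, since the trigonal construction produces $P(\pi)\cong JX$ (Theorem \ref{ramifiedtrigonal}), the fibre $\mathcal P_{3,2}^{-1}(JX)$ is identified with the fibre over $X$ of $\mathcal Mq_{3,\ast}^{tet}\to\mathcal M_3$, i.e.\ with the fibre over $X$ of $\mathcal M_{3,\ast}^{tet}\to\mathcal M_3$ modulo $i$ (note that $i$ fixes $X$). Everything is thus reduced to identifying that fibre with $\varphi(\mathsf U_X)$ and then taking the quotient by $i$.

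For the identification I would work inside $\mathsf G^1_4(X)$ using the blow-up picture \eqref{blowup}: for $X$ non-hyperelliptic of genus $3$, $\mathsf G^1_4(X)=\text{Bl}_{\omega_X}Pic^4(X)$, with exceptional divisor $E=\mathbb{P}H^0(X,\omega_X)^{*}$ the canonical plane, whose points are the projections from points of that plane. The fibre over $X$ of $\mathcal M_{3,\ast}^{tet}\to\mathcal M_3$ consists of the $g^1_4$'s with an \'etale point on each fibre and with exactly one fibre of type $2p+2q$. First I would check that the $g^1_4$'s having \emph{at least one} such fibre form precisely $\varphi(S)$: off $E$ this is tautological, since a complete $g^1_4$ with a fibre $2p+2q$ must be $\mathcal O_X(2p+2q)=\varphi_0(p+q)$; on $E$, the projection from a point $e$ has such a fibre exactly when $e$ lies on a bitangent, and one computes that $\varphi$ maps the exceptional divisor of $S$ over $p_i+q_i\in\mathcal B$ onto the bitangent line $\overline{p_i q_i}$, so $\varphi(S)\cap E$ is exactly the union of the $28$ bitangents. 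Removing the curve $\Gamma$ — by its definition the locus of $p+q$ for which $\mathcal O_X(2p+2q)$, equivalently $f_{p,q}$, acquires a second fibre of type $2x+2y$ — together with the points of the exceptional divisors lying on two bitangents then leaves exactly the $g^1_4$'s with a unique fibre of this type. Since $\varphi$ is injective on $\mathsf U_X$ (its failure of injectivity on $X^{(2)}$ is absorbed in $\Gamma$, and on the exceptional divisors it occurs only at the deleted double-bitangent points), this yields an isomorphism between the fibre over $X$ and $\varphi(\mathsf U_X)$.

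It remains to pass to the quotient. By Proposition \ref{lift} the involution $i$ leaves $\varphi(S)$ invariant and acts as the identity on $E$, so it fixes the double-bitangent locus pointwise; moreover $i$ maps $\Gamma$ onto itself, because it sends $p+q$ to the residual pair $u+v$ of the line $\overline{pq}$ and the condition defining $\Gamma$ is invariant under $2x+2y\mapsto 2K_X-2x-2y$. Hence $i$ restricts to an involution of $\varphi(\mathsf U_X)$, and combining with the first paragraph gives $\mathcal P_{3,2}^{-1}(JX)\cong\varphi(\mathsf U_X)/i$. I expect the principal obstacle to be the second paragraph: the infinitesimal computation identifying $\varphi(S)\cap E$ with the $28$ bitangents (i.e.\ controlling the differential of $\varphi_0$ at the points of $\mathcal B$, equivalently the tangent cone of $\varphi_0(X^{(2)})$ at $\omega_X$), together with the bookkeeping — including the behaviour along the diagonal of $X^{(2)}$, where $2p+2q$ degenerates to $4p$ — needed to show that $\mathsf U_X$ is exactly the open set for which $\varphi(\mathsf U_X)$ equals the tetragonal locus permitted by $\mathcal M_{3,\ast}^{tet}$, with no spurious fibre types surviving for generic $X$.
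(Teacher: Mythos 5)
Your proposal is correct and follows essentially the same route as the paper: reading the fibre off diagram \eqref{Diag}, identifying it with the fibre over $X$ of $\mathcal M_{3,\ast}^{tet}\to\mathcal M_3$, realizing that fibre as $\varphi(\mathsf U_X)$ via the blow-up \eqref{blowup}, and quotienting by $i$. You in fact spell out several points the paper leaves implicit (that $\varphi(S)\cap E$ is the union of the $28$ bitangents, the injectivity of $\varphi$ on $\mathsf U_X$, and the $i$-invariance of $\Gamma$), all of which are accurate.
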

\section{Case $g=4,r=2$}

In this last case we identify $\mathcal R_{4,2}$ with $\Delta^{n,0} $, the set of isomorphism classes of irreducible admissible coverings of curves of arithmetic genus $5$ with exactly one node. Notice that $\Delta^{n,0}$ is a dense open set of an irreducible divisor $\Delta^n$ in the boundary of $\overline {\mathcal R}_5$.  

In \cite{donagi} Donagi describes the generic fibre of the extended classical Prym map 
\[
\overline {\mathcal{P}}_{5}: \overline {\mathcal{R}}_5  \ra \mathcal{A}_4.
\]
He defines a birational map 
\[
\kappa: \mathcal A_4 \ra \mathcal {RC}^+,
\]
where $\mathcal {RC}^+$  is the moduli space of pairs $(V,\delta)$,  where $V$ is a smooth cubic threefold $V$ and $\delta \in JV_2$ a non-zero $2$-torsion point in the intermediate Jacobian $JV$ with a "parity" condition. An explicit open set in $\mathcal A_4$ where $\kappa $ is an isomorphism is given in \cite{izadi}. The main theorem in section $5$ of \cite{donagi} says that the fibre of $\kappa \circ \overline {\mathcal P}_5 $ at a generic $(V,\delta)$ is isomorphic to the surface $\widetilde {F(V)}$, which is the unramified double covering of the Fano surface $F(V)$ attached to $\delta $ (remember that $Pic^0(F(V))\cong JV$).   

Our aim is to identify which elements of $\widetilde {F(V)}$ correspond to admissible irreducible double coverings of nodal curves. In other words, we want to find the intersection:
\[
\widetilde {F(V)}\cap \Delta ^{n,0}.
\]

We will prove that the image of this intersection by the double covering 
\[
\tau: \left(\kappa \circ \overline {\mathcal P}_5\right)^{-1}(V,\delta)=\widetilde {F(V)} \ra F(V)
\]
lies in a curve $\Gamma $ already considered in the literature and which is defined as follows:
\[
\Gamma :=\{ l \in F(V) \mid \exists\; \text{a plane}\; \Pi \;\text{and a line}\; r\in F(V)\; \text{with} \;V\cdot \Pi = l+2r\}.
\]

	\begin{remark}\label{open_set}
	 It is stated in \cite[Proposition 2.6]{naranjo-ortega} that the curve $\Gamma $ is smooth for  a generic cubic threefold. There is a mistake in the parameter count of the proof in that paper and in fact  this curve has always nodes. Nevertheless, for a generic element of $\Gamma $ the plane $\Pi $ in the definition above is unique. Hence for a general element $l\in \Gamma$ the discriminant quintic $Q_l$  of the conic bundle structure provided by $l$, has only one node. We denote by $\Gamma_0 \subset \Gamma$ the open set of the points with this property.
	 \end{remark}

Let us denote by $\widetilde \Gamma $ the curve $\tau ^{-1}(\Gamma ).$
Izadi developed in \cite{izadi} the ideas outlined by Donagi in \cite[section 5]{donagi}. In section 3 she studied in detail the action of the involution $\lambda $ associated with $\tau$ and  the intersection of 
$\widetilde{F(V)}$ with the boundary. This intersection is a curve that we call $\Gamma '$ as explained in Proposition \ref{dominant}. 
The curve $\Gamma'$ is interchanged with a curve in the smooth locus of $\widetilde{F(V)}$, the locus of Prym curves with odd vanishing theta null (\cite[p. 121]{izadi}). These two curves map to $\Gamma $. Hence  the preimage of $\Gamma$ breaks into two components. Our aim is to determine the intersection of $\Gamma'$  with $\Delta^{n,0}.$

\begin{prop}
	For a generic cubic threefold $V$ we have that 
	\[
	\tau( \widetilde {F(V)}\cap \Delta ^{n,0})\subset \Gamma.
	\] 
	In particular $\widetilde {F(V)}\cap \Delta ^{n,0} \subset \Gamma'$.   \end{prop}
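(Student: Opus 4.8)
The plan is to use Donagi's explicit reconstruction of a point of the fibre from a line on $V$, and to decide which lines produce an admissible covering of a one-nodal curve. Recall that a point of $\widetilde{F(V)}=(\kappa\circ\overline{\mathcal P}_5)^{-1}(V,\delta)$ lying over a line $l\in F(V)$ is turned, following \cite[Section 5]{donagi} and \cite[Section 3]{izadi}, into a double covering $\widetilde C\to C$ of a curve $C$ of arithmetic genus $5$: one projects $V$ from $l$, obtains the conic bundle $\mathrm{Bl}_l V\to\mathbb P^2$ whose reducible fibres lie over the discriminant quintic $Q_l\subset\mathbb P^2$ (the locus of planes $\Pi\supset l$ whose residual conic $C_\Pi$, defined by $V\cdot\Pi=l+C_\Pi$, is singular), and the curve $C$ together with its double cover is governed by $Q_l$ and by the distinction of the two components of each degenerate conic.

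By definition $\Delta^{n,0}$ is the locus of irreducible admissible coverings whose base curve has arithmetic genus $5$ and exactly one node; equivalently (gluing the two branch, resp. ramification, points) it is identified with $\mathcal R_{4,2}$. Therefore a point of $\widetilde{F(V)}$ lies in $\Delta^{n,0}$ precisely when the genus $5$ base curve $C$ it produces is irreducible and carries a single node, this node being an admissible singularity of type $(\ast)$.

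The key step is to translate this into a condition on $l$. The base curve $C$ degenerates and acquires such a node exactly when the conic bundle attached to $l$ has a fibre equal to a double line, i.e.\ when there is a plane $\Pi$ through $l$ and a line $r\in F(V)$ with $V\cdot\Pi=l+2r$; equivalently, when $Q_l$ is singular. This is precisely the condition $l\in\Gamma$. I would extract this translation from Izadi's stratification of $\widetilde{F(V)}$ and from her description of $\widetilde{F(V)}\cap\partial\overline{\mathcal R}_5$ in \cite[Section 3]{izadi}, together with the genericity of $V$, which guarantees that at a general point of $\Gamma$ the plane $\Pi$ is unique and $Q_l$ has a single node (Remark \ref{open_set}); this yields $\tau(\widetilde{F(V)}\cap\Delta^{n,0})\subset\Gamma$. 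This translation is the heart of the matter and the main obstacle: one must check that a node of $C$ forces exactly a double-line fibre of the conic bundle — and not, say, a reducible degeneration of $C$, a worse singularity of $Q_l$, or a cusp — so that the covering genuinely lies in the irreducible one-nodal locus $\Delta^{n,0}$ and $l$ in fact lies in $\Gamma_0$; this is what the next section unwinds.

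The final assertion is then formal: $\widetilde{F(V)}\cap\Delta^{n,0}\subset\tau^{-1}\!\bigl(\tau(\widetilde{F(V)}\cap\Delta^{n,0})\bigr)\subset\tau^{-1}(\Gamma)=\widetilde\Gamma$, and $\widetilde\Gamma$ is the union of its two irreducible components $\Gamma'$ and $\lambda(\Gamma')$, where $\lambda$ is the deck involution of $\tau$. Since $\lambda(\Gamma')$ lies in the locus of Prym curves with odd vanishing theta null, which is contained in the interior of $\overline{\mathcal R}_5$ and hence disjoint from the boundary divisor $\Delta^n\supset\Delta^{n,0}$, we conclude $\widetilde{F(V)}\cap\Delta^{n,0}\subset\Gamma'$.
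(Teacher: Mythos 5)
Your architecture is right and your closing paragraph (the deduction of $\widetilde{F(V)}\cap \Delta^{n,0}\subset \Gamma'$ from the fact that the second component of $\tau^{-1}(\Gamma)$ lies in the odd-vanishing-theta-null locus, hence in the interior of $\overline{\mathcal R}_5$) matches what the paper does. But there is a genuine gap at exactly the point you flag as ``the heart of the matter'': the assertion that the base curve $C^*$ of a covering in the fibre over $l$ acquires a node precisely when $Q_l$ is singular is never established. Writing ``I would extract this translation from Izadi's stratification\dots this is what the next section unwinds'' defers the only nontrivial step of the proposition; as it stands, nothing in your argument rules out a covering in $\Delta^{n,0}$ sitting over a line $l$ with smooth discriminant $Q_l$. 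Note also that the relation between $C^*$ and $Q_l$ is not that $C^*$ is a fibrewise degeneration of the conic bundle: $Q_l$ is (for smooth $C$) the quotient of $W^1_4(C)$ by its natural involution, equivalently the discriminant of the net of quadrics through the canonical model, so the transfer of singularities from $C^*$ to $Q_l$ genuinely requires an argument.

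The paper supplies this step in two independent ways, and you need one of them. The first is the contrapositive of your claim: for $l\in F(V)\smallsetminus\Gamma$ the quintic $Q_l$ is smooth, and Donagi's reconstruction (the parity condition forcing $h^0(Q_l,\mathcal O_{Q_l}(1)\otimes\nu)$ even, plus the bijection between non-hyperelliptic genus $5$ curves and admissible coverings of smooth plane quintics with an even $2$-torsion point) produces two coverings of \emph{smooth} genus $5$ curves as the full fibre $\tau^{-1}(l)$; hence $\tau^{-1}(l)\subset\mathcal R_5$ misses $\Delta^{n,0}$. The second is direct: by Izadi's Theorem 6.13 (which is in her Section 6, not Section 3), $Q_l$ is the discriminant of the net $\mathbb P(I_2(\omega_{C^*}))$ of quadrics through the canonical model of $C^*$, and one then computes — using the unique rank-$4$ quadric through the canonical genus $4$ normalization, the cone over it with vertex at the node $p$, and Wall's results on nets of quadrics together with a tameness check at $p$ — that this discriminant is a quintic with exactly one node. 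This computation is not cosmetic: it is also what later pins down the image as $\Gamma_0$ and excludes coverings of curves with two or more nodes, which your sketch leaves open.
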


We have two proofs for this fact. The first follows closely Donagi's description of the fibre and concludes that if $l\in F(V) \smallsetminus \Gamma $ then $\tau ^{-1}(l)$ is given by coverings of smooth curves, that is $\tau^{-1}(l) \subset \mathcal R_5$. 
	
	Let us remind briefly this description: let $A\in \mathcal A_4$ be a generic abelian fourfold and put $\kappa (A)=(V,\delta)$.
	
	Choose a generic line $l\in F(V)$ and denote by $\pi_l:\widetilde {Q_l}\lra Q_l$ the admissible double covering attached to the conic bundle structure on $V$ provided by $l$. Then $Q_l$ is a smooth quintic plane curve and $P(\widetilde {Q_l},Q_l)\cong JV$ (see \cite{beauJacInt} or \cite[Appendix C]{clemgriff}). 
	
		Let $\sigma \in ({JQ_l})_2$ be the $2$-torsion point that determines $\pi_l$. Then, by the general theory of Prym varieties (see \cite[page 332, Corollary 1]{mumford}), there is an exact sequence 
	\begin{equation}\label{two-torsion}
		0 \lra \langle \sigma \rangle \lra \langle \sigma \rangle ^\perp \lra P(\widetilde {Q_l}, Q_l)_2=JV_2 \lra 0, 
	\end{equation}
	where $\langle \sigma \rangle ^\perp \subset ({JQ_l})_2 $ is the orthogonal with respect to the Weil pairing. Denote by $\nu $ a preimage of the fixed $2$-torsion point $\delta $ in $JV_2$, then the other preimage is $\nu':=\nu + \sigma $. Hence $\sigma,\nu,\nu'$define an isotropic subgroup $W_l$ of rank $2$ on $JQ_l$. 
	
	The parity condition of $(V, \delta) \in \mathcal {RC}^+$ means that  $h^0(Q_l,\mathcal O_{Q_l}(1)\otimes \nu )$ and  $h^0(Q_l,\mathcal O_{Q_l}(1)\otimes \nu' )$  are even. Thus there are two curves of genus $5$, $C$ and $C'$, such that $J C\cong P(Q_l, \nu)$ and $JC'\cong P(Q_l, \nu')$. 
	This is due to the existence of a bijection between non-hyperelliptic genus 5 curves $C$ and admissible coverings of quintic plane curves with an even 2-torsion point. The quintic appears as the quotient of  $W^1_4(C)$ by the natural involution.
	%
	
	Using for $P(Q_l,\nu)$ an exact sequence similar to (\ref{two-torsion}), we get: 
	\[
	0 \lra \langle \nu \rangle \lra \langle \nu \rangle ^\perp \lra P(Q_l,\nu)_2=J{C}_2 \lra 0.
	\]
	Therefore the rank 2 subgroup $W_l\subset \langle \nu \rangle ^\perp $ determines on $JC$ a $2$-torsion point $\mu$. Similarly there is a $\mu ' \in J C'_2$. Denoting with $\lambda $ the sheet interchange for the covering $\tau$, Donagi proves that:
	\[
	\lambda (C,\mu)=( C',\mu') \qquad \text {and } \qquad  P( C,\mu) \cong P(C,\mu')\cong A. 
	\]
	Hence the preimages of $l$ by $\tau $ are the elements $( C,\mu), (C',\mu')$ obtained previously. In particular, since they are smooth, we find that $\tau^{-1}(l) \subset \mathcal R_5$, as claimed. This concludes the first proof. \\
	
	The second proof is more constructive and more useful for our purposes. We show directly that for a covering in $\Delta ^{n,0}$ the corresponding line $l$ belongs to $\Gamma $. This approach relies on the following result of Izadi (see \cite[Theorem 6.13]{izadi}):
	
	\begin{teo}\label{izadi}
		Let $(V,\delta)$ be  a generic smooth cubic threefold endowed with a
		non-zero 2-torsion point and let $\pi^*: D^* \ra C^*$ be an admissible covering in the fibre of $(V,\delta)$. Assume that $\tau (  \pi^*)=l\in F(V)$. Then the discriminant quintic $Q_l$ of the conic bundle structure attached to $l$ parametrizes the set of singular quadrics through the canonical model of $ C^*$.
	\end{teo}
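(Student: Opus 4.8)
\textbf{Proof plan for Theorem \ref{izadi}.}
The plan is to reduce the statement to the case of a smooth covering $\pi : D \to C$ in $\mathcal{R}_5$ lying over a generic line $l \in F(V)$, where the result is essentially known, and then to let the smooth curve degenerate to an admissible covering $\pi^* : D^* \to C^*$ with $C^*$ nodal. First I would recall the dictionary, used already in the first proof, between a smooth line $l \in F(V)$, the associated conic bundle structure on $V$, and its discriminant quintic $Q_l \subset \mathbb{P}^2$; and the dictionary between a non-hyperelliptic genus $5$ curve $C$ with an even $2$-torsion point $\nu$ and an admissible covering of a quintic plane curve, where the quintic is $W^1_4(C)/\iota$. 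Concretely, the canonical model of $C$ sits in $\mathbb{P}^4$ as the base locus of a net of quadrics $\Pi \cong \mathbb{P}^2$, and the classical fact (Wirtinger; see \cite[Appendix C]{clemgriff}, \cite{beauJacInt}) is that the discriminant curve of this net — i.e.\ the locus in $\mathbb{P}^2$ of singular quadrics through $C$ — is precisely the quintic $Q_l$ carrying the theta characteristic that recovers $(C,\nu)$. This gives the statement when $C^* = C$ is smooth.

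Next I would run the degeneration. As $C$ acquires a node and becomes $C^* \in \overline{\mathcal{R}}_5$, its canonical model in $\mathbb{P}^4$ specializes to the canonical model of the nodal curve, and the net of quadrics $\Pi$ through $C$ specializes to a net of quadrics through $C^*$. The key point is that the discriminant quintic varies \emph{continuously} (indeed algebraically) in this family: the locus of singular quadrics in a net is cut out by the $5 \times 5$ determinant of the symmetric matrix of the pencil-of-conics coefficients, so it is a quintic plane curve depending algebraically on the net. Since $l$ and hence $Q_l$ also vary continuously along $\tau^{-1}$ of a path in $F(V)$, and the two quintics agree on the dense open set where $C$ is smooth, they must agree at the limit. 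Thus for $\pi^* \in \Delta^{n,0}$ with $\tau(\pi^*) = l$, the discriminant quintic $Q_l$ equals the curve of singular quadrics through the canonical model of $C^*$. I would phrase this either as a direct limiting argument inside the universal family over $\overline{\mathcal{R}}_5$, or — following Izadi's actual treatment — by the explicit analysis of the conic bundle and the net of quadrics at the boundary point.

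The step I expect to be the main obstacle is controlling the net of quadrics through the \emph{nodal} canonical curve $C^*$ of genus $5$: one must check that the net stays $2$-dimensional (no unexpected jump in $h^0(\mathcal{I}_{C^*}(2))$), that its discriminant genuinely remains a quintic rather than degenerating (e.g.\ acquiring a linear component or the whole $\mathbb{P}^2$), and that the even theta characteristic it carries is the right one. This is exactly the delicate boundary analysis the introduction flags as ``the quadrics containing a nodal canonical curve of genus $5$, which we consider interesting on its own'' (the content of section 7). I would handle it by a careful local computation at the node — writing the canonical embedding of $C^*$ explicitly near the node and intersecting with the space of quadrics — combined with semicontinuity of $h^0(\mathcal{I}_{C^*}(2))$ to pin down the dimension, and with the parity/evenness condition built into $(V,\delta) \in \mathcal{RC}^+$ to identify the theta characteristic. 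Once the net and its discriminant are shown to behave well at the boundary, the identification $Q_l = \{\text{singular quadrics through }C^*\}$ follows from the smooth case by continuity, completing the argument.
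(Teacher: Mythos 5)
You should first be aware that the paper does not prove this statement at all: it is quoted verbatim as Theorem 6.13 of Izadi \cite{izadi}, supplemented only by a remark (citing 6.30 of loc.\ cit.) that the line Izadi attaches to $\pi^*$ coincides with $\tau(\pi^*)$. Your proposal is therefore, by necessity, a genuinely different route: a reproof by specialization from the smooth case rather than Izadi's direct analysis at the boundary. The smooth-case input you invoke is correct — the canonical genus $5$ curve is the base locus of a net of quadrics, the discriminant of the net is the quotient of $W^1_4(C)$ by the natural involution, and this quintic is identified with $Q_l$ through Donagi's description of the fibre of $\overline{\mathcal P}_5$ — though the right references for the net-of-quadrics picture are \cite{ACGH} and \cite[Section 5]{donagi} rather than \cite{clemgriff} and \cite{beauJacInt}, which concern only the conic-bundle side.

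The difficulty is that, as written, your argument defers exactly the step that carries all the content, so it remains a programme rather than a proof. To run the limit you need: (i) $\dim I_2(\omega_{C^*})=3$ for the nodal curve, so that the nets form a flat family and the limit of the nets through $C_t$ is the full net through $C^*$ — this is precisely what the paper later extracts from the twisted normalization sequence \eqref{normalizationtwisted} inside Lemma \ref{Lemmaquintiche}; (ii) the determinant of the limiting net is not identically zero (equivalently, the generic quadric through $C^*$ is smooth), so that the discriminant of the limit net is the limit of the discriminants; Wall's results \cite{wall} supply this; and (iii) the identification of the two ambient projective planes — the base of the conic bundle attached to $l$ on one side, and $\mathbb P(I_2(\omega_{C^*}))$ on the other — must itself be shown to specialize compatibly. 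Point (iii) is the one you gloss over entirely: showing that two families of plane quintics agree as abstract curves on a dense open set is weaker than the theorem's assertion that $Q_l$ \emph{parametrizes} the singular quadrics through $C^*$, and it is exactly here that the nontrivial matching of Izadi's line with $\tau(\pi^*)$ (her 6.30) intervenes. None of these steps is expected to fail, but until they are carried out the proof is incomplete — which is presumably why the authors chose to cite Izadi rather than reprove the statement, and why they isolate the nodal net-of-quadrics analysis as separate content in Section 7.
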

	By canonical model we mean the image of $C^*$ by the morphism attached to the dualizing sheaf.
	\begin{remark} 
		The line $l$ attached to $\pi^*$ is defined in \cite{izadi} in a different way. However it is proved in 6.30 in loc. cit. that it equals $\tau (\pi^*)$.  
	\end{remark}
	Let $(V,\delta)$ be  a generic smooth cubic threefold endowed with a
		non-zero 2-torsion point and let $\pi : D\ra C$ be the generic element in $\mathcal R_{4,2}$ of the fibre of $\mathcal P_{4,2}$ above  $(V,\delta)$. We denote by  $\pi^* : D^*\ra C^*$ the corresponding admissible covering in $\Delta ^{n,0} \subset \overline{ \mathcal{R}}_5$. By definition $C^*=C/b_1\sim b_2$ is a curve of arithmetic genus $5$ with a node in $p$ obtained by glueing the two branch points $b_1, b_2$ of $\pi$.
	\begin{lemma}\label{Lemmaquintiche}Under the above assumptions:\begin{itemize}
			\item[a)] the quintic plane curve parametrizing the singular quadrics containing the image of the canonical map of $C^*$ is a quintic with exactly one node. In particular 
			 $\tau (\pi ^*) \in \Gamma_0.$
			\item[b)] The quintic plane curve parametrizing the singular quadrics containing the canonical image of an arithmetic genus 5 curve with at least two nodes is a nodal quintic with at least two nodes.
		\end{itemize}
		\end{lemma}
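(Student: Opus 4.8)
<br>

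The plan is to analyze the canonical image of a curve $C^*$ of arithmetic genus $5$ with prescribed nodes and to count, with care, the singular quadrics through it. First I would recall the setup: for a curve $C^*$ of arithmetic genus $5$ with nodes, the canonical (dualizing) morphism maps $C^*$ to $\mathbb{P}^4$; for the generic such curve with $\delta$ nodes the canonical image is a complete intersection of three quadrics (or rather lies on a net of quadrics), exactly as in the smooth genus $5$ case, and the discriminant of this net is a plane quintic curve in $\mathbb{P}^2 = |I_{C^*}(2)|$. When $C^*$ is smooth and non-hyperelliptic this discriminant quintic is smooth; the whole point of part (a) is that each node of $C^*$ forces, in a controlled way, exactly one node of the discriminant quintic.

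For part (a) the key computation is local: a single node $p$ on $C^*$ (coming from glueing the two branch points $b_1,b_2$ of $\pi\in\mathcal R_{4,2}$) corresponds, on the side of quadrics, to the rank drop. Concretely, I would use that the quadrics vanishing on the canonical model of $C^*$ are the same as quadrics vanishing on the normalization $C$ together with the line $\overline{b_1 b_2}$ spanned by the two points (since the node sits on a chord and the dualizing sheaf pulls back to $\omega_C(b_1+b_2)$, the image of which spans $\mathbb{P}^4$ with $b_1,b_2$ mapping to the two points of a secant line passing through the image of the node). A quadric $Q$ through $C^*$ must contain this secant line; the condition that $Q$ be singular precisely at (or along a subspace meeting) the node $p$ picks out a codimension one sublocus, and a standard dimension count on the net shows the corresponding point of the discriminant quintic is a node rather than a smooth point — any worse singularity would increase the dimension of the family of such quadrics beyond what the net allows for a generic $(V,\delta)$. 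Combined with genericity (the quintic $Q_l$ for a generic $l\in\Gamma$ has exactly one node, as recalled in \ref{open_set}), this forces the discriminant quintic attached to $\pi^*$ to have exactly one node, whence $Q_l$ has exactly one node and so $\tau(\pi^*)\in\Gamma_0$ by the very definition of $\Gamma_0$. For this last step I would invoke Theorem \ref{izadi}, which identifies the discriminant quintic $Q_l$ with the quintic of singular quadrics through the canonical model of $C^*$.

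For part (b) the argument is the monotone version of (a): each additional node of the arithmetic genus $5$ curve contributes its own node to the discriminant quintic. One way to see this cleanly is a specialization/degeneration argument — deform a curve with two nodes to one with a single node (or to a smooth one) and track the discriminant quintic in the family, using upper-semicontinuity of the number of nodes under the constraint that the total arithmetic genus and the net-of-quadrics structure are preserved; alternatively, repeat the local analysis at each node independently, noting that the nodes of $C^*$ are in general position so the corresponding chords and the resulting quintic singularities are independent and distinct. Either way one concludes the discriminant quintic has at least two nodes.

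The main obstacle I expect is part (a), and specifically making rigorous the claim that a single node of $C^*$ produces a node — and not a worse singularity such as a cusp or tacnode — on the discriminant quintic. This requires understanding precisely the rank stratification of the net of quadrics near the point corresponding to the "extra" singular quadric, and ruling out coincidences for generic $(V,\delta)$; the delicate point is that the net of quadrics through a nodal canonical genus $5$ curve is itself a degeneration of the generic net, so one must check the discriminant does not acquire extra tangency or higher-order contact along the exceptional locus. I would handle this either by the explicit local model of the node $C = \{xy = 0\}$ sitting on a secant line (writing down coordinates on $\mathbb{P}^4$ adapted to $b_1, b_2$) and computing the $3\times 3$ symmetric matrices of quadrics in the net, or by the deformation-theoretic comparison with Donagi's and Izadi's analysis of the boundary divisor $\Delta^n$, leaning on Theorem \ref{izadi} to transport the problem entirely to the quadric side where the computation is linear-algebraic.
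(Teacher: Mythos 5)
Your overall strategy --- identify the quintic with the discriminant of the net $\mathbb P(I_2(\omega_{C^*}))$ and show that each node of $C^*$ produces a node of the discriminant --- matches the paper's, but the proposal leaves the decisive step unproven and in one place argues circularly. The paper's proof of a) does not rest on a ``standard dimension count'': it reduces to the case where $C$ is non-hyperelliptic with two distinct $g^1_3$'s, exhibits an explicit rank-$4$ quadric $Q$ (the cone with vertex $p$ over the unique quadric through the canonical genus-$4$ model of $C$), computes $\dim I_2(\omega_{C^*})=3$, and then invokes Wall's results on nets of quadrics \cite{wall}: the tameness of $p$ (verified by an explicit matrix computation), Wall's Lemma~1.1 and Proposition~1.3 to see that the universal family $N$ has a single singular point, of nodal type, lying over $p$, an explicit determinant computation showing that this point maps to a singular point of the discriminant, and Wall's Theorem~1.4 to conclude that the discriminant has a unique node. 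You flag exactly this as ``the main obstacle'' and propose two possible routes without carrying either out, so the core of the argument is missing. Moreover, your route to ``exactly one node'' --- combining the local analysis with the genericity statement of Remark~\ref{open_set} --- is circular: the content of the lemma is precisely that the line attached to a one-nodal admissible covering lands in the locus $\Gamma_0$ where $Q_l$ has one node, and you cannot assume that $\tau(\pi^*)$ is a generic point of $\Gamma$. Finally, the local picture of a secant line $\overline{b_1b_2}$ that all quadrics must contain is off: in $\mathbb P^4=\mathbb P(H^0(C,\omega_C(b_1+b_2))^*)$ the two branch points are identified to the single node $p$, and the relevant geometry is the projection from $p$ onto the canonical $\mathbb P^3$ of $C$, not a chord.

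For b) the sketch is likewise insufficient. A specialization argument tracking nodes of the discriminant in a family is delicate (nothing prevents, a priori, the two candidate singular points of the discriminant from colliding or the singularity type from degenerating), and the ``independence'' of the contributions of distinct nodes is exactly what has to be proved. The paper argues directly: after a partial normalization one finds a quadric $Q$ in the net which is a cone with vertex at one node and is smooth at the other, while Wall's Lemma~1.2 produces a member of the net singular at the other node, hence necessarily different from $Q$; the map from singular points of the universal family to singular points of the discriminant then yields two distinct singular points of the quintic. Your second alternative is closer in spirit to this, but as written it asserts rather than proves the key separation $Q\neq Q_\mu$.
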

	\begin{proof}
     Since the general fibre of $\mathcal P_{4,2}$ has pure dimension 1, by dimensional reason, it will not intersect the locus where $C$ is hyperelliptic and it will intersect the locus where $C$ admits a unique $g^1_3$ in at most a finite number of points. Therefore it is enough to prove part $ a) $ assuming $C$ not hyperelliptic and with two distinct $g^1_3$'s.
     
	The map $\varphi: C \ra \mathbb P(H^0(C, \omega_C(b_1+b_2))^*)$ satisfies $\varphi (b_1)=\varphi (b_2)$ and it is an isomorphism out of these two points. Hence $\varphi (C)=C^*$ and $\varphi$ can be seen as the normalization  $n:C\ra C^*$ composed with the inclusion $C^*\subset \mathbb{P}(H^0(C, \omega_C(b_1+b_2)))^*=\mathbb{P} ^4$.
	
	We have the following exact sequence:
	\begin{equation}\label{normalization}
			0\ra \omega_{C^*}\ra n_*(\omega_C(b_1+b_2))\ra \mathbb{C}_p\ra0,    
	\end{equation}
	which induces 
\begin{equation}\label{coomnormalization}
		0\ra H^0(C^*,\omega_{C^*})\ra H^0(C,\omega_C(b_1+b_2))\xrightarrow{res}\mathbb{C}\ra\mathbb{C}\ra0,
\end{equation}
 where $res$ is the map $\omega\mapsto \text{res}_{b_1}\omega+\text{res}_{b_2}\omega$. By the residue theorem it vanishes identically. Therefore 
	\[ 
	H^0(C^*,\omega_{C^*})\cong H^0(C,\omega_C(b_1+b_2)).  
	\]
Now let $L$ be a $g^1_3$ on $C$ and consider bases 
	\[
	H^0(C,L)=\langle t_1,t_2\rangle, \qquad H^0(C,\omega_C\otimes L^{-1})=\langle s_1,s_2\rangle.
	\]
	Put 
	\[
	\omega_1=t_1s_1 \quad  \omega_2=t_2s_1 \quad  \omega_3=t_1s_2 \quad  \omega_4=t_2s_2,    
	\]
	to get 
	
$$ 	H^0(C,\omega_C)=\langle \omega_1,\omega_2, \omega_3, \omega_4\rangle $$ and then completing the basis we get $ H^0(C,\omega_C(b_1+b_2))=\langle \omega_1,\omega_2, \omega_3, \omega_4, \omega_5\rangle $.

	We obtain the following diagram:
	\begin{equation}\label{cono}
		\begin{tikzcd}
			C\arrow[r,"\varphi"]\arrow[dr,hook, "g"]&\mathbb{P}^4\arrow[d,dashed]\\
			&\mathbb{P}^3
		\end{tikzcd}
	\end{equation} 
	where $g$ is the canonical map and the vertical rational map is given by dualizing the inclusion $H^0(\omega_C)\subset H^0(\omega_C(b_1+b_2))$. It corresponds to the projection from the point $p$.
	
	Since $C$ is a general curve of genus 4, there exists a unique quadric  $Q$ containing its canonical model and it has rank $4$, namely (here $\odot$ denotes the symmetric product): $Q=\omega_1\odot\omega_4-\omega_2\odot\omega_3.$ In particular, in the chosen coordinates, $\varphi(b_i)=p=[0:0:0:0:1]$ ($i=1,2$) and  $Q=\{x_1x_4-x_2x_3=0\}.$ 
	The preimage of $Q$ by the projection is a cone with vertex  $p$ which contains $C^*$ and has rank four (and in fact the same equation). We still call it $Q$. 
	
	Using now (see e.g. \cite[pp. 90]{ACGH2})
	\begin{equation}\label{normalizationtwisted}
		0\ra \omega_{C^*}^{\otimes2}\ra n_*(\omega_C^{\otimes2}(2b_1+2b_2))\ra \mathbb{C}_p\ra0
	\end{equation}
	and its corresponding long exact sequence in cohomology, we obtain that also in the case of a nodal curve of arithmetic genus 5 \[\dim \ker (Sym^2H^0(C^*, \omega_{C^*})\ra H^0(C^*, \omega_{C^*}^{\otimes2}))=\dim I_2(\omega_{C^*})=3.\]
	Taking $ Q, Q_1,Q_2$ as a basis, we would like to show that the discriminant curve $\Delta$ of the family of quadrics 
	\[
	\mathbb{P}(I_2(\om_{C^*}))=\mathbb P(\langle Q, Q_1,Q_2\rangle)\]
	is nodal. 
	By the above considerations $p\in S(Q)\cap C^*$, where $ S(\cdot) $ denotes the singular locus.
	
	In the paper \cite{wall}, Wall studied the discriminant locus of nets of quadrics. In particular	\cite[Lemma 1.1]{wall} ensures that $([1:0:0], p)$ belongs to $S(N)$, where 
	\[
	N:=\{([\lambda_0:\lambda_1:\lambda_2],x)\mid x^t(\lambda_0 A_0+\lambda_1 A_1+\lambda_2 A_2)x=0 \}\subset \mathbb P(\langle Q, Q_1,Q_2\rangle)\times \mathbb P^4
	\]
	is the universal family of the net of quadrics containing $C^*$ ($A_i, i=0,1,2$ are the matrices associated to $Q,Q_1,Q_2$). To be more precise we would have to write $Q=Q_{[1:0:0]}$ and the analogous for other $Q_i$. We will omit the subscript when it will be possible.
	
	Assuming that every point in $S(C^*)$ is tame (we give the definition below), the map $S(N)\ra S(C^*)$, which sends $(\lambda=[\lambda_0:\lambda_1:\lambda_2],x)$ to $ x$, becomes bijective. Since, in our case, $S(C^*)=\{p\}$, we obtain that $S(N)=\{([1:0:0],p)\}$. Moreover $p\in S(C^*)$ and $([1:0:0],p) \in S(N)$ have the same type of singularity (by \cite[Proposition 1.3]{wall}).
	\begin{claim}\label{mapproj}
		The map \begin{align*}
		\rho: N&\ra\mathbb{P}^2\\
		(\lambda, x)&\mapsto\lambda \notag
		\end{align*} sends $S(N)$ to $S(\Delta)$.
		\begin{proof}
			First observe that since\[\rho\meno(\Delta)=\{(\lambda,x) : x \in Q_{\lambda}\;\text{and}\; Q_{\lambda}\;\text{is singular}\},\]
			we get \[S(N)\subseteq \rho\meno(\Delta).\] 
			We conclude with the Jacobian criterion. 
			Indeed, using local coordinates for which a singular point of $N$ is $ 	([1:0:0],[0:0:0:0:1]) $, we have: 
			\begin{equation*}
			Q\leftrightarrow\begin{pmatrix}
			a_1 &0 &0 &0 &0\\
			0 &a_2 &0 &0 &0\\
			0 &0 &a_3 &0 &0\\
			0 &0 &0 &a_4 &0\\
			0 &0 &0 &0 &0\\
			\end{pmatrix}, \; Q_1\leftrightarrow\begin{pmatrix}
			& &  & & \\
			& &  & &\\
			& &\ast  & &\\
			& &  & &\\
			& &  & &0\\
			\end{pmatrix},
			\; Q_2\leftrightarrow\begin{pmatrix}
			& &  & & \\
			& &  & &\\
			& &\ast  & &\\
			& &  & &\\
			& &  & &0\\
			\end{pmatrix},
			\end{equation*}
			since $ p $ is a point in all quadrics of the net and $ Q $ is singular in $p$. 
			Therefore 	$ \lambda_0A_0+\lambda_1A_1+\lambda_2A_2 $ has a $ 0 $ in position $(5,5)$, homogeneous linear polynomials $l=l(\lambda_1,\lambda_2)$ out of the diagonal and $l+\lambda_0a_i$ on the diagonal ($i=1,2,3,4$).
			
			Put $ G(\lambda_0,\lambda_1,\lambda_2)=det(\lambda_0A_0+\lambda_1A_1+\lambda_2A_2 ) $. Then it is possible to write 
			$$G=f_5+\lambda_0f_4+\lambda_0^2f_3+\lambda_0^3f_2,$$
			where  $ f_i $ are homogeneous polynomials in $(\lambda_1,\lambda_2)$ of degree $i$. 
			Therefore $\partial_\lambda G(p)=0$, i.e. $G$ is singular at $p$ and thus $[1:0:0]$ belongs to $S(\Delta)$.\\
		\end{proof}
	\end{claim}
	
	 Applying \cite[Theorem 1.4]{wall} for $([1:0:0])$ in $S(\Delta)$ (and resp. $([1:0:0],p) \in S(N)$), we conclude that the discriminant locus of $N$ has a unique nodal point, as claimed.
	 
	It only remains to show that $p$ is tame. By definition a point of $C^*$ is \textit{tame} if the tangent planes to $C^*$ at the point span a $2$-dimensional vector space.
	We check that $p$ is tame: call $\pi_i, i=0,1,2$ the tangent planes of the three quadrics at $p$.
	In coordinates: 
	\[
	\pi_i: (0:0:0:0:1)A_i \textbf{y}=0,\qquad i=0,1,2.
	\] 
	Thus: 
	\[
	p A_0 \textbf{y}=(0:0:0:0:1) \begin{pmatrix}
	0 &0 &0 &1 &0\\
	0 &0 &-1 &0 &0\\
	0 &-1 &0 &0 &0\\
	1 &0 &0 &0 &0\\
	0 &0 &0 &0 &0\\
	\end{pmatrix} \textbf{y}=0
	\]
	and 
	\[
	p A_i \textbf{y}=0 \Leftrightarrow (a_{5,1}^i,a_{5,2}^i,a_{5,3}^i,a_{5,4}^i,a_{5,5}^i) \textbf{y}=0, 
	\]
	where $a^i_{5,j}$ are the coefficients of the last row of the matrices $A_i, i=1,2$. Call these vectors $\mathbf{a_1},\mathbf{a_2}$. Then $p$ is tame if 
	\[
	\dim \langle \mathbf{a_1},\mathbf{a_2}\rangle=2. 
	\]
	Suppose, by contradiction, that $\mathbf{a_2}=\mu\mathbf{a_1}$. Then $Q': \mu A_1-A_2$ belongs to $I_2(K_C)$, so $Q'=\nu Q$. Therefore 
	\[
	0=\nu Q -Q'=\nu Q-\mu Q_1+Q_2,
	\]
	which is impossible. This concludes the proof of part $a)$.
	
	In order to prove part $b)$, let us start with an admissible double-nodal covering $\pi^{2*}: D^{2*}\ra C^{2*}$, i.e. $S(C^{2*})=\{p_1,p_2\}$. Consider the following partial normalization maps:
	\begin{equation*}
	\begin{tikzcd}
	N_1\arrow[r,"\alpha"]\arrow[rr,bend left, "n"]&\tilde{N}_1\arrow[r,"\beta"]&C^{2*}
	\end{tikzcd}
	\end{equation*}
	where $n$ is the normalization, $\beta$ is the partial normalization of the node in $p_2$ while $\alpha$ of the one in $p_1$. By dimension reason we can assume that $N_1$ is not hyperelliptic.
	
	A short exact sequence for $\omega_{\tilde{N}_1}$ similar to \eqref{normalization} ensures that $$\dim I_2(\omega_{\tilde{N}_1})=\dim I_2(\omega_{N_1}(q_1+q_1'))=1,$$ where $q_1,q_1'$ are the two points of $N_1$ sent to $p_1$ by $\alpha$. 
	
	We remark that the unique quadric $Q$ containing the image of  \[N_1\ra\tilde{N}_1\subset \mathbb{P}(H^0(\omega_{N_1}(q_1+q_1'))) \] cannot be singular in $p_1$. Otherwise, if we write in local coordinates $$Q=\sum_{i,j\leq 4}a_{ij}x_ix_j\quad  \text{and} \quad  p_1=[0:0:0:1],$$ we would get $\partial_iQ(p_1)=a_{i4}=0$ for every $i$. But this would imply $Q\in I_2(\omega_{N_1})$, which is impossible since $I_2(\omega_{N_1})=0$. Then, dualizing the inclusion $H^0(\omega_{\tilde{N}_1})\subset H^0(\omega_{\tilde{N}_1}(q_2+q_2'))$ (points $q_2,q_2'$ are identified in $p_2$ in $C^{2*}$), we obtain a diagram as \eqref{cono} where the rational map $\mathbb{P}^4\dashrightarrow\mathbb{P}^3$ is given by the projection from $p_2$. The preimage of $Q$ is a cone with vertex $p_2$ which is smooth in $p_1$ and which contains our curve with two nodes. With an abuse of notation, we still denote it by $Q$. 
	
	The short exact sequence \eqref{normalizationtwisted} for the bicanonical $\omega_{C^{2*}}^{\otimes2}$ of $C^{2*}$ shows that $\dim I_2(\omega_{C^{2*}})=3.$ Call, as before, $N\subset \mathbb{P}(I_2(\omega_{C^{2*}}))\times \mathbb{P}^4$ the universal family of the net of quadrics containing $C^{2*}$. Thus, $Q$ is the point $\lambda=[1:0:0]$ in $\mathbb{P}^2$. 
	
	Since $C^{2*}$ has two singular points, \cite[Lemma 1.2]{wall} shows that there exists $\mu\in \Delta$ (the discriminant curve of the net $\mathbb{P}(I_2(\omega_{C^{2*}}))$) such that $p_1\in S(Q_{\mu})$. Hence $Q\neq Q_{\mu}$. 
	This concludes the proof: $(\lambda, p_2) $ and $(\mu, p_1) $ belong to $S(N)$. Therefore the map $\rho$ of Claim \ref{mapproj} (which works also in case of $Q$ of rank 3, that is $a_4=0$) determines two different singular points in $\Delta$.
	
	The cases $\#S(C^*)=3,4$ are similar: the partial normalization at one point leads to a curve of arithmetic genus 4 with singular points. Call one of them $p_1$. As above we find a quadric $Q$ in $I_2$ which is a cone with vertex $p_1$ on a quadric which is smooth in at least one of the remaining nodes. Applying Wall's theorems, we know the existence of another quadric which is singular in at least one among the other nodes. This leads to a discriminant curve $\Delta$ which has at least two singular points. \\
\end{proof}
Thus the following holds  (see \ref{open_set} for the definition of $\Gamma_0$):
\begin{teo}\label{teo6}
	The generic fibre of $\mathcal P_{4,2}$  at $(V,\delta)$ is isomorphic to $ \Gamma_0$.
	\begin{proof}
		Take $\cov$ in $\mathcal R_{4,2}$ and denote, as above, $\pi^*$ the corresponding element in $\Delta^{n,0}$. Lemma \ref{Lemmaquintiche}$a)$ shows that $\tau(\pi^*)$ belongs to $\Gamma_0 $. Therefore, in order to show that the generic fibre of $\mathcal P_{4,2}$  at $(V,\delta)$ is isomorphic to $\Gamma_0$,  it remains to prove that an element in $\Gamma '$ with two or more nodes maps to $\Gamma \setminus \Gamma_0$. Since $A\in \mathcal{A}_4$ is generic, we can suppose $A$ simple and hence, using \cite{beau}, we can just take into account coverings of irreducible curves. Finally, the inclusion $\Delta^{n,0}\subset \Delta^n$ guarantees that we only have to take care of admissible coverings of irreducible curves with more than one node. Therefore, suppose by contradiction that $\widetilde \Gamma$ contains an admissible covering of an irreducible curve with (at least) two nodes. Lemma \ref{Lemmaquintiche}$b)$ gives us a quintic plane curve with at least two nodes. This contradicts the assumption on  $\Gamma_0$ and thus we can conclude. \\
	\end{proof}
\end{teo}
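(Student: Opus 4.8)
The plan is to extract the fibre of $\mathcal P_{4,2}$ from Donagi's description of the generic fibre of $\overline{\mathcal P}_5$ and then intersect with the boundary stratum $\Delta^{n,0}$. Write $\kappa(A)=(V,\delta)$ with $A\in\mathcal A_4$ generic, so that $\kappa$ is an isomorphism near $A$ by Izadi's explicit open set. Under the identification $\mathcal R_{4,2}\cong\Delta^{n,0}\subset\overline{\mathcal R}_5$ we then have
\[
\mathcal P_{4,2}^{-1}(A)=\Delta^{n,0}\cap(\kappa\circ\overline{\mathcal P}_5)^{-1}(V,\delta)=\widetilde{F(V)}\cap\Delta^{n,0},
\]
and by the Proposition above (stating $\widetilde{F(V)}\cap\Delta^{n,0}\subset\Gamma'$) this set lies in the component $\Gamma'$ of $\widetilde\Gamma=\tau^{-1}(\Gamma)$. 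Since $\tau\colon\widetilde{F(V)}\to F(V)$ is an \'etale double covering and $\Gamma'$ is one of the two irreducible components of the preimage of the curve $\Gamma$, the restriction $\tau|_{\Gamma'}\colon\Gamma'\to\Gamma$ is an isomorphism of (abstract) curves. Hence it is enough to prove that $\tau$ carries $\widetilde{F(V)}\cap\Delta^{n,0}$ bijectively onto $\Gamma_0$.

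For the inclusion $\tau\big(\widetilde{F(V)}\cap\Delta^{n,0}\big)\subseteq\Gamma_0$ I would run exactly the argument of Lemma~\ref{Lemmaquintiche}(a): starting from $\pi\colon D\to C$ in $\mathcal R_{4,2}$, pass to the admissible covering $\pi^*\colon D^*\to C^*$ with $C^*=C/b_1\sim b_2$ of arithmetic genus $5$ and a single node; then the discriminant quintic $Q_l$ attached to $l=\tau(\pi^*)$ has exactly one node, so $l\in\Gamma_0$ by the description recalled in~\ref{open_set}. For the reverse inclusion one must show that no point of $\Gamma\setminus\Gamma_0$ is reached. Because $A$ is generic it is simple, so by Beauville's theory every element of the fibre is an admissible covering of an \emph{irreducible} curve; moreover, by the first proof of the Proposition above, the coverings of \emph{smooth} curves all map to $F(V)\setminus\Gamma$. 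Consequently any element of $\Gamma'$ lying in $\widetilde{F(V)}$ but not in $\Delta^{n,0}$ is an admissible covering of an irreducible curve with at least two nodes, and Lemma~\ref{Lemmaquintiche}(b) then forces its discriminant quintic to have at least two nodes, so its image under $\tau$ lies in $\Gamma\setminus\Gamma_0$. Combining the two inclusions, $\tau$ restricts to a bijection $\widetilde{F(V)}\cap\Delta^{n,0}\to\Gamma_0$, which is an isomorphism since $\tau|_{\Gamma'}$ is.

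The main obstacle I expect is making the reverse inclusion genuinely sharp, i.e. knowing that $\Gamma_0$ is exactly the image rather than a proper subset of it. This rests on Izadi's Theorem~\ref{izadi} together with Wall's analysis of the singular locus of a net of quadrics, already used in the proof of Lemma~\ref{Lemmaquintiche}: one needs the correspondence $l\leftrightarrow C^*$ to match the singularity type of $Q_l$ with that of $C^*$, so that ``$Q_l$ has exactly one node'' forces ``$C^*$ has exactly one node'' (membership in $\Delta^{n,0}$, not in a deeper boundary stratum), and one needs the relevant tameness hypothesis to hold at the generic point of $\Gamma_0$. A secondary point to keep straight is that $\Delta^{n,0}$ meets only the component $\Gamma'$ of $\widetilde\Gamma$ and not the odd vanishing-theta-null component sitting in the smooth locus of $\widetilde{F(V)}$ --- this is precisely where the Proposition above is used --- and that the two components of $\widetilde\Gamma$ are genuinely distinct, so that $\tau|_{\Gamma'}$ has degree one. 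Once these structural facts are in place the argument reduces to a short chase.
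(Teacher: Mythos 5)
Your proposal follows the paper's own argument essentially verbatim: identify the fibre with $\widetilde{F(V)}\cap\Delta^{n,0}\subset\Gamma'$ via Donagi--Izadi and the preceding Proposition, use Lemma~\ref{Lemmaquintiche}(a) to send one-nodal coverings into $\Gamma_0$, invoke simplicity of the generic $A$ and Beauville to reduce to irreducible curves, and use Lemma~\ref{Lemmaquintiche}(b) to send coverings with two or more nodes into $\Gamma\setminus\Gamma_0$, so that $\tau|_{\Gamma'}$ identifies the fibre with $\Gamma_0$. The only (harmless) difference is that you spell out explicitly that $\tau|_{\Gamma'}\colon\Gamma'\to\Gamma$ is an isomorphism because $\Gamma'$ is one of the two components of $\tau^{-1}(\Gamma)$, a point the paper leaves implicit.
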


\section{Fibres of the Prym map and Shimura varieties}
In this section we give some examples of irreducible components of some fibres of the ramified Prym maps which yield Shimura subvarieties of ${\mathcal A}_g$. 
Recall that in \cite{moonen-special}, \cite{moonen-oort}, \cite{fgp}, \cite{fpp} examples of Shimura subvarieties of ${\mathcal A}_g$ generically contained in the Torelli locus have been constructed  as families of Jacobians of  Galois covers of ${\mathbb P}^1$ or of elliptic curves. Some of them are contained  in fibres of ramified Prym maps. In \cite{fgs} and \cite{gm} infinitely many examples of totally geodesic and of Shimura varieties generically contained in the Torelli locus have been constructed as fibres of ramified Prym maps.

In particular, the images in ${\mathcal M}_2$ and in ${\mathcal M}_3$ of ${\mathcal R}_{1,2}$, respectively ${\mathcal R}_{1,4}$, are  the bielliptic loci and in \cite{fpp} it is shown that their images in ${\mathcal A}_2$, resp. ${\mathcal A}_3$, via the Torelli maps are Shimura subvarieties.
In \cite{fgs} it is proven  that the irreducible components of the fibres of the Prym maps ${\mathcal P}_{1,2}$, ${\mathcal P}_{1,4}$ are totally geodesic curves and countably many of them are Shimura curves. 
Moreover in \cite{fgs} the authors show that family $(7) = (23) = (34)$ of \cite{fgp} is a fibre of the Prym map ${\mathcal P}_{1,4}$, which is a Shimura curve. 

It is easy to see that  the Shimura family (24) of \cite{fgp} is contained in a fibre of the Prym map ${\mathcal P}_{2,2}$. In fact it is a family of curves $D$ of genus 4 with an action of a group $G = {\mathbb Z}/2 \times  {\mathbb Z}/2 \times {\mathbb Z}/3$, such that  $D/G \cong {\mathbb P}^1$ and the map $D \to D/G \cong {\mathbb P}^1 $ is branched over  $B$ which consists of 4 distinct points. In terms of the generators $g_1, g_2, g_3$ of $G$, with $o(g_1) = o(g_2) = 2$, $o(g_3) = 3$, the monodromy of the covering $\theta: \pi_1({\mathbb P^1} \setminus B)  \cong \langle \gamma_1,...,\gamma_4 \ | \ \gamma_1 \gamma_2 \gamma_3 \gamma_4 = 1 \rangle \to G$ is $\theta(\gamma_1) =  g_2$, $\theta(\gamma_2) =  g_1g_2$, $\theta(\gamma_3) =  g_3 $, $\theta(\gamma_4) =  g_1g_3^2$. 

One easily checks that the map $D \to D/\langle g_1 \rangle$ is a double covering of a genus 2 curve, ramified over 2 points.  Moreover the Prym variety $P(D,C)$ is isogenous to $E \times E'$ where $E = D/\langle g_2 \rangle$ and $E' = D/\langle g_1g_2 \rangle$ and $E$ and $E'$ do not move, since the Galois covers $E \to E/(G/\langle g_2 \rangle) = D/G = {\mathbb P}^1$ and $E' \to E'/(G/\langle g_1g_2 \rangle) = D/G = {\mathbb P}^1$ both have only 3 critical values. This shows that the family of covers $D \to C$ is contained in a fibre of the Prym map ${\mathcal P}_{2,2}$. 

Finally, we give an explicit  new example of a totally geodesic curve which is an irreducible component of a fibre of the Prym map ${\mathcal P}_{1,2}$. \\

{\bf Example.} 

Consider a family of Galois covers $\psi_{\lambda} : D_{\lambda} \to D_{\lambda}/G \cong {\mathbb P}^1$,  ramified over $B = \{P_1 = \lambda, P_2=1, P_3=0, P_4 = \infty\}$ with $G \cong (\mathbb Z/4 \times \mathbb Z/4) \ltimes \mathbb Z/2$ and with $g(D_{\lambda} ) =11$.  We use the following presentation of $G$:
\begin{gather*}
	G \cong \langle g_1, g_2, g_3, g_4, g_5  \ | \ g_1^8=g_2^2=g_3^4=g_4^4=g_5^2=1,\ g_1^2 = g_4, \\ g_3^2 = g_4^2 = g_5, \ g_1^{-1} g_2 g_1 = g_2g_3, \  g_1^{-1} g_3 g_1 = g_3g_5, \ g_2^{-1} g_3 g_2 = g_3g_5 
	\rangle.
\end{gather*}
Notice that $G = (\langle g_1g_2g_3 \rangle \times \langle g_4 \rangle) \ltimes \langle g_2 \rangle \cong (\mathbb Z/4 \times \mathbb Z/4) \ltimes \mathbb Z/2$. For simplicity, as above,  we omit the index $\lambda$ and we denote an element of the family of Galois covers simply by $\psi: D \to D\to D/G \cong  {\mathbb P}^1$. The monodromy of the cover $\theta: \pi_1({\mathbb P^1} \setminus B)  \cong \langle \gamma_1,...,\gamma_4 \ | \ \gamma_1 \gamma_2 \gamma_3 \gamma_4 = 1 \rangle \to G$ is
\[[\theta(\gamma_1) =  g_2g_3g_5,  \ \theta(\gamma_2) =  g_3g_4g_5, \ \theta(\gamma_3) =  g_1g_2g_4g_5, \ \theta(\gamma_4) =  g_1g_3g_4g_5]\] and these elements have  orders $[2,2,4,8]$ in $G$. 

Consider the subgroup $H = \langle g_2, g_5\rangle \cong \mathbb Z/2 \times \mathbb Z/2$ of $G$. By Riemann Hurwitz formula one easily  computes the genus of the quotient $D/H$ which is 2.  Set  $K : =  \langle g_2, g_3g_4 \rangle \cong D_4$, $K_1 :=  \langle g_2, g_4 \rangle \cong \mathbb Z/2 \times  \mathbb Z/4 $, $K_2 :=  \langle g_2, g_3 \rangle \cong D_4$. The genus 2 curve $C$ admits three distinct double covers:  
\begin{gather*}
	f: C = D/H \to D/ K  \cong {\mathbb P}^1,\\
	f_1: C = D/H \to D/ K_1=:E_1, \  f_2: C = D/H \to D/ K_2 =: E_2,
\end{gather*}
where $E_1$ and $E_2$ are elliptic curves. 

The double covers $\pi_1: E_1 = D/ K_1 \to D/ \langle  g_2, g_3, g_4\rangle \cong {\mathbb P}^1$,  $\pi_2: E_2 = D/ K_2 \to D/ \langle  g_2, g_3, g_4\rangle \cong {\mathbb P}^1$ allow to express the elliptic curves $E_1$ and $E_2$ in Legendre form: 
$$E_1: y^2 = x(x-\mu)(x^2 -1), \ \ E_2: y^2 = x(x^2-1),$$
where $\mu^2 = \lambda$. 

Therefore the elliptic curve $E_2$ does not move and $J(C)$ is isogenous to $E_1 \times E_2$. So the Prym varieties $P(C,E_1)$ are isogenous to the fixed elliptic curve $E_2$. Thus the 1-dimensional family of double covers $ \pi_1: C \to E_1$ is contained in a fibre of the Prym map ${\mathcal P}_{1,2}$, hence it gives an irreducible component of a fibre of ${\mathcal P}_{1,2}$. 
 
\end{document}